\newtheorem{theorem}{Theorem}[section]
\newtheorem{lemma}[theorem]{Lemma}
\newtheorem{proposition}[theorem]{Proposition}
\newtheorem{corollary}[theorem]{Corollary}
\theoremstyle{definition}
\newtheorem{definition}[theorem]{Definition}
\newtheorem{keyexample}[theorem]{Key Example}
\newtheorem{claim}[theorem]{Claim}
\newtheorem{setup}[theorem]{Setup}
\theoremstyle{remark}
\newtheorem{remark}[theorem]{Remark}
\numberwithin{equation}{section}
\newcommand{\ov}{\overline}
\newcommand{\Int}{\mathrm{Int}}
\newcommand{\der}{\mathrm{der}}
\newcommand{\mc}{\mathcal}
\newcommand{\Q}{\mathbb{Q}}
\newcommand{\mb}{\mathbf}
\newcommand{\mf}{\mathfrak}
\newcommand{\inv}{\mathrm{inv}}
\newcommand{\obs}{\mathrm{obs}}
\newcommand{\A}{\mathbb{A}}
\newcommand{\Sc}{\mathrm{sc}}
\newcommand{\ad}{\mathrm{ad}}
\newcommand{\iso}{\mathrm{iso}}
\newcommand{\bas}{\mathrm{bas}}
\newcommand{\alg}{\mathrm{alg}}
\newcommand{\bb}{\mathbb}
\newcommand{\C}{\mathbb{C}}
\newcommand{\R}{\mathbb{R}}
\newcommand{\Gal}{\mathrm{Gal}}
\newcommand{\Z}{\mathbb{Z}}
\newcommand{\im}{\mathrm{im}}
\newcommand{\Gm}{{\mathbb{G}_{\mathrm{m}}}}
\newcommand{\Hom}{\mathrm{Hom}}
\newcommand{\sr}{\mathrm{sr}}
\newcommand{\Res}{\mathrm{Res}}
\newcommand{\sep}{\mathrm{sep}}
\newcommand{\reg}{\mathrm{reg}}
\newcommand{\semis}{\mathrm{ss}}
\newcommand{\wh}{\widehat}
\newcommand{\Lie}{\mathrm{Lie}}
\newcommand{\Ad}{\mathrm{Ad}}
\title[Global adelic $\mb{B}(G)$ and non-regular transfer factors]{Global $\mb{B}(G)$ with adelic coefficients and transfer factors at non-regular elements}
\author{Alexander Bertoloni Meli}
\address{University of Michigan, 530 Church St, Ann Arbor, MI 48109, United States\qquad email: \href{mailto:abertolo@umich.edu}{abertolo@umich.edu}
}
\begin{document}
\sloppy
\begin{abstract}
    The goal of this paper is extend Kottwitz's theory of $B(G)$ for global fields. In  particular, we show how to extend the definition of ``$B(G)$ with adelic coefficients'' from tori to all connected reductive groups. As an application, we give an explicit construction of certain transfer factors for non-regular semisimple elements of non-quasisplit groups. This generalizes some results of Kaletha and Taibi. These formulas are used in the stabilization of the cohomology of Shimura  and Igusa varieties.
\end{abstract}

\maketitle

\tableofcontents
\section{Introduction}

Let $F$ be a $p$-adic field and $\breve{F}$ be the completion of the maximal unramified extension of $F$ with $\sigma$ the Frobenius endomorphism. Then for a connected reductive group $G$ defined over $F$, the Kottwitz set $\mb{B}(G)$ is given as the set of $\sigma$-twisted conjugacy classes of $G(\breve{F})$. Namely we say that $g, g' \in G(\breve{F})$ are in the same $\sigma$-conjugacy class if $g' = h^{-1} g \sigma(h)$ for some $h \in G(\breve{F})$. The set $\mb{B}(G)$ appears throughout the theories of $p$-adic geometry and $p$-adic representation theory.

In \cite{Kot9}, Kottwitz constructed a set $\mb{B}(F,G)$ for every local and global field $F$ and linear algebraic group $G$ defined over $F$. The sets $\mb{B}(F,G)$ are defined to be certain cohomology sets and we have a natural bijection $\mb{B}(G) \cong \mb{B}(F,G)$ in the $p$-adic case. More precisely, the sets $\mb{B}(F,G)$ are defined in terms of the cohomology of \emph{Galois gerbs}. In each case there is a certain $F$-protorus $\bb{D}_F$ and for each finite Galois extension $K/F$, Kottwitz defines an extension of groups:
\begin{equation*}
    1 \to \bb{D}_F(K) \to \mc{E}(K/F) \to \Gal(K/F) \to 1.
\end{equation*}
He then defines a certain cohomology set $H^1_{\alg}( \mc{E}(K/F), G(K))$ and $\mb{B}(F, G)$ is defined as the limit:
\begin{equation*}
    \varinjlim_K H^1_{\alg}( \mc{E}(K/F), G(K)),
\end{equation*}
where the maps between these sets are given by certain inflation maps. We frequently use the notation $H^1_{\alg}(\mc{E}, G(\ov{F}))$ for the above limit when we want to stress the cohomological nature of the construction.

In the global case, the situation is more complicated than we have indicated because Kottwitz defines three pro-tori $D_1, D_2, D_3 (= \bb{D}_F)$ and additional gerbs
\begin{equation*}
    1 \to D_2(\A_K) \to \mc{E}_2(K/F) \to \Gal(K/F) \to 1,
\end{equation*}
and 
\begin{equation*}
    1 \to D_1(\A_K)/D_1(K) \to \mc{E}_1(K/F) \to \Gal(K/F) \to 1.
\end{equation*}
In the case where $G=T$ is a torus, these other gerbs give rise to cohomology sets $H^1_{\alg}(\mc{E}_2(K/F), T(\A_K))$ and $H^1_{\alg}(\mc{E}_1(K/F), T(\A_K)/T(K))$ respectively. By taking injective limits, we get sets $\mb{B}_2(F, T)$ and $\mb{B}_1(F, T)$ which can be thought of as ``$\mb{B}(F, T)$ with $\A_K$- and $\A_K/K$-coefficients'' respectively. Unfortunately, this construction does not extend to general $G$ since, for instance, when $i=2$ it requires an action of $G(\A_K)$ on $\Hom_K(D_2, G)$ that restricts to the action of $G(K)$. In the torus case this action can be defined to be trivial, but in general there does not appear to be a natural way to define such an action. 

Our first main result is to extend the theory of the cohomology of $\mc{E}_2(K/F)$ and $\mc{E}_1(K/F)$ beyond the case of tori to allow $G$ to be any connected reductive group. We do this by generalizing the cohomology set ``$H^1_Y(E, M)$'' that Kottwitz constructs in \cite[\S 12]{Kot9}. In the specific example mentioned in the previous paragraph, our construction allows us to consider the pair of sets $\Hom_K(D_2, G) \hookrightarrow \prod\limits_v \Hom_{K_v}(D_2, G)$ and we only require that $G(\A_K)$ acts on the larger space.

This allows us to define sets $H^1_{\alg}(\mc{E}_2(K/F), G(\A_K))$ and $H^1_{\bas}(\mc{E}_1(K/F), G(\A_K)/Z_G(K))$. We develop the theory for these sets in analogy with \cite{Kot9} and in particular define a ``total localization map'' relating $H^1_{\alg}(\mc{E}_2(K/F), G(K))$ to the local gerbs at each place $u$ of $F$ and $v$ of $K$ such that $v \mid u$. We get
\begin{equation*}
    l^F: H^1_{\alg}(\mc{E}_2(K/F), G(\A_K)) \to \bigoplus\limits_u H^1_{\alg}(\mc{E}_{\iso}(K_v/F_u), G(K_v)),
\end{equation*}
where $\mc{E}_{\iso}$ is the notation we use for Kottwitz's local gerb.

We then prove this map fits into a fundamental commutative diagram connecting the cohomology of these different gerbs and certain character groups:
\begin{equation}{\label{introfundiag}}
\begin{tikzcd}
\bigoplus\limits_{u \in V_F} H^1_{\bas}(\mc{E}_{\iso}(K_v/F_u), G(K_v)) \arrow[d] & H^1_{\bas}(\mc{E}_2(K/F), G(\A_K))  \arrow[l, "l^F", swap] \arrow[r] \arrow[d] & H^1_{\bas}(\mc{E}_1(K/F), G(\A_K)/Z_G(K)) \arrow[d] \\
   \bigoplus\limits_{u \in V_F} X^*(Z(\widehat{G}))_{\Gal(K_v/F_u)} & \left[\bigoplus\limits_{v \in V_K} X^*(Z(\widehat{G}))\right]_{\Gal(K/F)} \arrow[r, "\Sigma"] \arrow[l, "\sim",swap]& X^*(Z(\widehat{G}))_{\Gal(K/F)}. 
\end{tikzcd}    
\end{equation}
This diagram generalizes an analogous diagram for tori appearing in Kottwitz's paper (\cite[\S 1.5]{Kot9}).

The three global gerbs correspond to cohomology classes that were first studied systematically by Tate (\cite{Tatecohomology}) and appear to be very important objects. The group $D_1$ equals $\Gm$ and hence the Galois gerb $\mc{E}_1(K/F)$ corresponds to the canonical class in $H^2(\Gal(K/F), \Gm(K))$ of global class field theory. On the other hand, the $\mc{E}_2(K/F)$ gerb is constructed from the local canonical classes at each place of $K$. Scholze (\cite[Conjecture 9.5]{scholzeicm}) has conjectured the existence of a  cohomology theory for varieties over $\ov{\bb{F}}_p$ valued in the representation category of the $\mc{E}_3$ gerb that would specialize to most known cohomology theories. Scholze notes that an important first step in the direction of this conjecture is to give a linear algebraic description of this representation category in analogy with the theory of isocrystals for $p$-adic fields. The $n$-dimensional representations of $\mc{E}_3$ are classified by the set $\mb{B}(F,GL_n)$. The cohomology of the $\mc{E}_3$ gerb is closely related to the cohomology of the other gerbs as Diagram \eqref{introfundiag} indicates. In fact, the existence of the $\mc{E}_3$-gerbe itself is only deduced as a consequence of the construction of the $\mc{E}_2$ and $\mc{E}_1$ gerbs. Hence, one motivation for developing the results of this paper is to define the sets $\mb{B}_2(F, GL_n)$ and $\mb{B}_1(F, GL_n)$ which should shed light on $\mb{B}(F, GL_n)$.

Another application of the global theory of $\mb{B}(F, G)$ for a number field $F$ is in the normalization of the Langlands correspondence. In particular, for $G$ satisfying the Hasse principle, this set is used to state the ``global multiplicity formula'' describing the decomposition of the discrete part of $L^2_{\chi}(G(F) \setminus G(\A_F))$. This is accomplished by Kaletha and Taibi in \cite{KalTai}. When the group $G$ is not quasi-split, the statement of this formula seems to require use of either the $\mb{B}(F,G)$-normalization or the more general but more complicated rigid normalization. This problem is discussed in \cite{Kal1} and \cite{KalTai} and the $\mb{B}(F,G)$-normalization is used in \cite{KMSW} to prove the global multiplicity formula for unitary groups in the non quasi-split case.  

In trace formula arguments where the global multiplicity formula is used, one needs a normalization of local and global transfer factors between $G$ and an endoscopic group $H$ that is compatible with the normalization of the global multiplicity formula. Such a normalization is constructed in \cite{KalTai} for strongly regular semi-simple $\gamma \in G(F)$ using the theory of $\mb{B}(F,G)$. We recall that $\gamma$ is strongly regular if its centralizer in $G$ is a torus. 

Crucially, the construction of Kaletha and Taibi requires that $\gamma$ is strongly regular because they need to use the ``adelic form'' of $\mb{B}(F, G)$ which was only known for $G$ a torus.  As an application of the first part of the paper, we prove:
\begin{theorem}[Imprecise version of Theorem \ref{localtrans}]
Suppose $F$ is a number field and $G$ is a connected reductive group over $F$ that satisfies the Hasse principle and has simply connected derived subgroup. Then the theory of $H^1_{\alg}(\mc{E}_2, G(\ov{\A_F}))$ gives an explicit normalization of the transfer factors between $G$ and any endoscopic group $H$ for semisimple $\gamma \in G(F)$. This normalization is compatible with the isocrystal normalization of the global multiplicity formula as in \cite{KalTai}.
\end{theorem}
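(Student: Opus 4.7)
The plan is to reproduce the Kaletha-Taibi construction of normalized transfer factors, but with the connected centralizer $I = Z_G(\gamma)^\circ$ (a connected reductive group, since $G_{\der}$ is simply connected) replacing the torus $T = Z_G(\gamma)$ that appears in the strongly regular case. Wherever their argument invokes the adelic form of $\mb{B}(F, T)$, we substitute the set $H^1_{\alg}(\mc{E}_2(K/F), I(\A_K))$ developed in the first part of the paper; the Hasse principle hypothesis on $G$ provides the vanishing required to control the local-global obstructions for $I$.

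First I would, given an endoscopic datum for $G$ and a matching pair $\gamma \leftrightarrow \gamma_H$, construct from the Langlands-Shelstad data a canonical class $\xi \in H^1_{\alg}(\mc{E}_2, I(\ov{\A_F}))$, following exactly the recipe of \cite{KalTai} but in the enlarged framework. Its localizations $l^F(\xi)_u \in H^1_{\alg}(\mc{E}_{\iso}(K_v/F_u), I(K_v))$ are precisely the cohomology classes needed to normalize the local transfer factors $\Delta_u[\gamma_H, \gamma]$ via the isocrystal formalism at semisimple (not necessarily regular) elements. The global transfer factor is then defined as the product $\prod_u \Delta_u[\gamma_H, \gamma]$, and one must show that this product is well defined, independent of auxiliary choices, and recovers the Kaletha-Taibi factor when $\gamma$ is strongly regular.

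Global coherence would then be verified by invoking the fundamental commutative diagram from the introduction. The independence of the product over all places from the local normalization ambiguities follows from the fact that the product of local Kottwitz-pairings equals the global pairing, which is precisely the content of the commutative square involving $\Sigma$ and the localization isomorphism on the left of that diagram. Compatibility with the isocrystal global multiplicity formula of \cite{KalTai} then follows because the pairing used there on $\mb{B}(F, Z(G))$ and its analog for $I$ factor through exactly the same diagram, so matching characters of $Z(\widehat{G})$ on each side yields the compatibility automatically.

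The main obstacle is the construction and well-definedness of the canonical class $\xi$: unlike the torus case, where $G(\A_K)$ acts trivially on $\Hom_K(D_2, T)$, for the reductive group $I$ one must work with the larger pair $\Hom_K(D_2, I) \hookrightarrow \prod_v \Hom_{K_v}(D_2, I)$ on which only the adelic group acts. The delicate step is to verify that the cocycle built by transport from a $Z$-extension, or from an admissible embedding of a maximal torus of $I$ into $G$, is independent of these auxiliary choices. Once this independence is established using the functoriality of $H^1_{\alg}(\mc{E}_2, -)$ proved earlier in the paper, the remaining verifications reduce to a diagram chase that yields both the desired normalization and its compatibility with \cite{KalTai}.
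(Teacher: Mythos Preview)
Your framework is right—build the refined invariant $\inv[z^{\iso}](\gamma^*,\gamma)$ in $H^1_{\bas}(\mc{E}_2, I^{\mb{G}^*}_{\gamma^*}(\ov{\A_F}))$ using the adelic theory, localize it, and pair with $s$ to define the candidate factor $\Delta[\mf{w}_v, z^{\iso}(v)](\gamma^H,\gamma):=\Delta[\mf{w}_v](\gamma^H,\gamma^*)\cdot\langle \inv[z^{\iso}(v)](\gamma^*,\gamma),\widehat{\varphi}_{\gamma^*,\gamma^H}(s)\rangle^{-1}$. This is exactly what the paper does. But there is a real gap in your plan: you never explain why this candidate is actually a \emph{local transfer factor} in the sense of Langlands--Shelstad. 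You go directly from ``construct the class and localize'' to ``global coherence via the fundamental diagram,'' skipping the local verification entirely. The fundamental commutative diagram controls the global product and its compatibility with $\obs(\gamma)$, but it does not by itself tell you that your local expression satisfies the transfer-factor axioms at $(\mb{G},\mb{H})$-regular (non-strongly-regular) elements.

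The paper's proof of this point is by a \emph{limiting argument}, and it is the technical heart of the section. By \cite[\S2.4]{LS1}, the value of any transfer factor at a $(\mb{G},\mb{H})$-regular pair is determined as a limit of its values at strongly regular pairs. Since at strongly regular elements the candidate agrees with the Kaletha--Taibi factor (already known to be a transfer factor), it suffices to produce sequences $\gamma^*_i\to\gamma^*$, $\gamma_i\to\gamma$ of strongly regular elements such that the correction term $\langle\inv[z^{\iso}(v)](\gamma^*_i,\gamma_i),\widehat{\varphi}(s)\rangle$ is \emph{constant} along the sequence. The paper achieves this by: (i) identifying equivalence classes of triples $(\Psi,z,[\gamma])$ with certain basic classes in $\mb{B}(F_v, I^{G^*}_{\gamma^*})$ (Lemma~\ref{extendedbij}); (ii) lifting such a basic class to a fundamental maximal torus $T^*\subset I^{G^*}_{\gamma^*}$ via the surjectivity of $\mb{B}(F_v,T^*)\to\mb{B}_{\bas}(F_v,I^{G^*}_{\gamma^*})$ (Lemma~\ref{bgsurj}); and (iii) using this to find a single $g$ with $\Psi(gT^*g^{-1})=T$ an $F_v$-torus, so that the sequences can be taken inside $T^*$ and $T$ with the \emph{same} $g$, forcing the invariant cocycle to be literally unchanged (Lemma~\ref{sequencelem}). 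Your proposal contains no analogue of this step. Also, since $\mb{G}_{\der}$ is assumed simply connected, no $z$-extension is needed here; the well-definedness issue you flag is not the obstacle.
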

The normalization of transfer factors for non-strongly regular elements is needed in the analysis of the trace formula for the cohomology of Shimura varieties. In particular, the results of this paper are used in work of the author that uses the cohomology of  Shimura varieties to deduce new formulas for the cohomology of Rapoport-Zink spaces (\cite{BM2}) and related work of the author and K.H. Nguyen proving the Kottwitz conjecture on the cohomology of Rapoport-Zink spaces for odd unramified unitary similitude groups (\cite{BMN1}).

Finally, we make some remarks about the organization of the paper. In \S\ref{globalBGappendix} we develop the abstract theory of the cohomology of $\mc{E}_2(K/F)$ and $\mc{E}_1(K/F)$, in particular constructing the maps and proving the commutativity of Diagram \eqref{introfundiag}. In \S\ref{s: normalizingtransferfactors} we discuss the $\mb{B}(F,G)$-normalization of transfer factors for $(G,H)$-regular elements. We remark that to do this, we do not need the full strength of the theory developed in \S\ref{globalBGappendix} because we need only work with the basic sets $\mb{B}_i(F,G)_{\bas}$. However, the theory from \S\ref{globalBGappendix} is used in Proposition \ref{obseqz}, which is then used in Corollary \ref{cor: globaltranscor}. We also use \S\ref{globalBGappendix} to prove (before Corollary \ref{cor: globaltranscor}) that for a fixed pair $(\gamma^{\mb{H}}, \gamma) \in \mb{H}(F)_{(\mb{G}, \mb{H})-\reg} \times \mb{G}(F)$ and $F$ a number field, the local transfer factors vanish at almost every place.

\subsection*{Acknowledgements}

I would like to thank Tasho Kaletha for many helpful discussions and suggestions. I am also grateful for conversations with Sug Woo Shin and Alex Youcis which influenced my thoughts on the contents of this paper. This research was partially supported by NSF RTG grant DMS-1840234.

\section{Global \texorpdfstring{$\mb{B}(G)$}{B(G)} with Adelic Coefficients}{\label{globalBGappendix}}

In this section we develop a formalism that allows us to define a global cohomology set $H^1_{\alg}(\mc{E}_2(K/F), G(\A_K))$ for a Galois extension $K/F$, the Galois gerbe $\mc{E}_2(K/F)$ defined by Kottwitz in \cite{Kot9}, and a  general reductive group $G$ defined over $F$. This generalizes the construction given by Kottwitz in \cite{Kot9} of the set $H^1_{\alg}(\mc{E}_2(K/F), T(\A_K))$ for $T$ an algebraic torus split by $K$. We then develop the theory of the set $H^1_{\alg}(\mc{E}_2(K/F), G(\A_K))$ in analogy with \cite{Kot9}.

In Kottwitz's article, these groups are defined in the case where $T$ is a torus using the $H^1_Y(E,M)$ construction of his \S$3$ and \S$12$. This construction is not sufficient for our purposes because Kottwitz requires the group $M$ to act on $Y$. In our setting, we would therefore need $G(\A_K)$ to act on the set of algebraic maps $\Hom_K(D , G)$ (where $D$ is a pro-torus) which it does not. Our solution is to develop a theory in the spirit of \cite[\S 12]{Kot9} but in a setting that allows for adelic coefficients.

Our setup is as follows.
\begin{setup}{\label{setup: cohdat}}
 We suppose we have the following objects:
\begin{itemize}
    \item an abstract group $G$,
    \item an abelian group $A$ equipped with a $G$-action (i.e. a $G$-module),
    \item an extension
    \begin{equation*}
        1 \to A \to E \to G \to 1,
    \end{equation*} such that the conjugation action of $G$ on $A$ coincides with the action in the previous item,
    \item a possibly non-abelian group $M$ equipped with an action of $G$ by automorphisms of $M$,
    \item an $M \rtimes G$-set $\ov{Y}$,
    \item a map $\xi: \ov{Y} \to \Hom(A, M)$ of $M \rtimes G$ sets where $M \rtimes G$ acts on $\Hom(A, M)$ by $\phi \mapsto \Int(m) \circ (g \circ \phi \circ g^{-1})$,
    \item A subset $Y \subset \ov{Y}$ which need not be an $M \rtimes G$-subset.
\end{itemize}
Note that $E$ acts on $M$ through $G$. We further require
\begin{itemize}
    \item $\xi(y)(A) \subset M_y$ for all $y \in Y$ (where $M_y$ is the stabilizer of $y$ in $M$).
\end{itemize}

For a fixed extension $1 \to A \to E \to G \to 1$, we call the tuple $(M, \ov{Y}, \xi, Y)$ a \emph{cohomology datum} for $E$.
\end{setup}

\begin{definition}{\label{cocycledef}}
Given an extension $E$ as in Setup \ref{setup: cohdat} and a cohomology datum $(M, \ov{Y}, \xi, Y)$ for $E$, we define $Z^1_Y(E,M)$ to be the set of pairs $(\nu, x)$ such that $\nu \in Y$ and $x \in Z^1(E, M)$ is an abstract cocycle satisfying the following conditions.
\begin{enumerate}
    \item The restriction of $x$ to $A$ gives $\xi(\nu)$.
    \item $x_w  \cdot \sigma(\nu) = \nu$ for each $w \in E$ where $\sigma$ is the projection of $w$ to $G$.
\end{enumerate}
Note that when $\xi$ is injective, the second condition above is implied by the cocycle relation.

We define $H^1_Y(E,M)$ to be the quotient of $Z^1_Y(E,M)$ by the equivalence relation that $(\nu, x) \sim (\nu', x')$ if there exists $m \in M$ such that $\nu = m \cdot \nu'$ and for all $w \in E$, we have $m^{-1} x_w w(m)=x'_w$.
\end{definition}

Suppose that $H^1(G,A)=0$. Then all automorphisms of the extension
\begin{equation*}
        1 \to A \to E \to G \to 1,
\end{equation*}
are given by conjugation by some element $a \in A$. Such an automorphism induces an automorphism of $Z^1_Y(E,M)$ given by $(\nu, x) \mapsto (\nu, x \circ \Int(a))$. If we let $m=\xi(\nu)(a)=x_a$ then we see that by assumption $m$ acts trivially on $\nu$ and hence that $(\nu,x)$ and $(\nu, x \circ \Int(a) )=(m^{-1} \cdot \nu, w \mapsto mx_w w(m^{-1}))$ agree inside $H^1_Y(E,M)$. In particular, we have proven that if $H^1(G,A)=0$ then the set $H^1_Y(E,M)$ depends up to canonical isomorphism only on $M$ and the class $\alpha \in H^2(G,A)$ giving the extension $E$.

\begin{keyexample}{\label{keyexample}}
When $Y=\ov{Y}$, this construction specializes to the $H^1_Y(E,M)$ construction given in \cite[\S 12]{Kot9}. In particular we review the following key examples.
\begin{itemize}
    \item Let $K/F$ be a finite extension of local fields and consider $\Gm(K)$ with the natural $\Gal(K/F)$ action. Then the fundamental class $\alpha \in H^2(\Gal(K/F), \Gm(K))$ corresponds to an isomorphism class of extensions. We choose a representative which  we denote by
\begin{equation*}
    1 \to \Gm(K) \to \mc{E}_{\iso}(K/F) \to \Gal(K/F) \to 1.
\end{equation*}
Then for any connected reductive group $G$ over $F$, we give $G(K)$ the natural $\Gal(K/F)$-action and define $\ov{Y_{\iso}}=Y_{\iso}=\Hom_K(\Gm, G)$. Then we have a natural map 
\begin{equation*}
\xi: \Hom_K(\Gm, G) \to \Hom(\Gm(K) , G(K))
\end{equation*}
and we can define the set $H^1_{\alg}(\mc{E}_{\iso}(K/F) , G(K))$ to be equal to $H^1_{Y_{\iso}}(\mc{E}_{\iso}(K/F), G(K))$. 
\item Now fix $K/F$ a finite Galois extension of global fields and $D_1, D_2, D_3$ the $F$ pro-tori with character groups $X_1=\Z, X_2=\Z[V_K], X_3=\Z[V_K]_0$ where $V_K$ is the set of places of $K$, where $\Z[V_K]$ is the free abelian group generated by $K$, and $\Z[V_K]_0$ is the subgroup of elements whose coefficients sum to $0$. Let $A_1=\A^{\times}_K/K^{\times}, A_2=\A^{\times}_K, A_3=K^{\times}$.  In \cite[\S $6.2$]{Kot9}, Kottwitz describes the construction due to Tate for $i=1,2,3$ of canonical classes $\alpha_i \in H^2( \Gal(K/F), \Hom(X_i, A_i))$ corresponding to extensions
\begin{equation*}
    1 \to \Hom(X_i, A_i) \to \mc{E}_i(K/F) \to \Gal(K/F) \to 1. 
\end{equation*}

Now fix an $F$-torus $T$ that is split by $K$ and define $Y_i= \Hom_K(D_i, T)$. Then one can form the groups
\begin{equation*}
    H^1_{Y_1}(\mc{E}_1, T(\A_K)/T(K)), H^1_{Y_2}(\mc{E}_2, T(\A_K)), H^1_{Y_3}(\mc{E}_3, T(K)),
\end{equation*}
using the above definition. These are the sets
\begin{equation*}
    H^1_{\alg}(\mc{E}_1(K/F), T(\A_K)/T(K)), H^1_{\alg}(\mc{E}_2(K/F), T(\A_K)), H^1_{\alg}(\mc{E}_3(K/F), T(K)),
\end{equation*}
as given in \cite{Kot9}. 
\item Using the notation in the previous item, we define $Y_3=\Hom(D_3, G)$ and then can define the set $H^1_{\alg}(\mc{E}_3(K/F), G(K))$ to be equal to $H^1_{Y_3}(\mc{E}_3(K/F), G(K))$.
\end{itemize}
\end{keyexample}

\begin{definition}{\label{adeliccoeffs}}
Our definitions are slightly more general than those of Kottwitz because we allow $Y \subset \ov{Y}$ to be a proper subset and allow $Y$ to not be an $M \rtimes G$ set. This means that we can define $H^1_{\alg}(\mc{E}_2(K/F), G(\A_K))$ for a general reductive group $G$ defined over a global field $F$ and $K$ a finite Galois extension. 

Let $V_K$ be the set of places of $K$ as before. Then we define $Y_2 := \Hom_K(D_2,G)$ and note there is a natural inclusion
\begin{equation*}
    Y_2 \hookrightarrow \prod_{v \in V_K} \Hom_{K_v}(D_2, G).
\end{equation*}
We then define  $\ov{Y_2}$ to be the $G(\A_K) \rtimes \Gal(K/F)$-orbit of $Y_2$ inside $\prod_{v \in S_K} \Hom_{K_v}(D_2, G)$. Then $\ov{Y_2}$ is naturally a $G(\A_K) \rtimes \Gal(K/F)$- -set and we have a natural map $\xi_2 : \ov{Y_2} \to  \Hom(D_2(\A_K), G(\A_K))$. Finally, we define 
\begin{equation*}
      H^1_{\alg}(\mc{E}_2(K/F), G(\A_K)) := H^1_{Y_2}(\mc{E}_2(K/F), G(\A_K)).
\end{equation*}
Note that $Y_2$ does not have a natural $G(\A_K)$-action so we do indeed need the more general formalism.
\end{definition}

We now study, as in \cite[\S 12]{Kot9}, the naturality of our construction. 
\subsection{Naturality with respect to \texorpdfstring{$(M, \ov{Y}, \xi, Y)$}{M,Y, Y-bar, X}}
The most basic situation to consider is for $E$ fixed. Then we suppose we have two cohomology data $(M, \ov{Y}, \xi, Y)$ and $(M', \ov{Y}', \xi', Y')$ such that we have a $G$-map $f: M \to M'$ and a $M \rtimes G$-map $g: \ov{Y} \to \ov{Y'}$ (where $M$ acts on $\ov{Y'}$ through $f$) such that $g(Y) \subset Y'$ and such that the diagram
\begin{equation*}
\begin{tikzcd}
\ov{Y}  \arrow[d, "g", swap]  \arrow[r, "\xi"]& \Hom(A,M) \arrow[d, "f \circ"] \\
\ov{Y'} \arrow[r, "\xi'"] & \Hom(A, M')
\end{tikzcd}    
\end{equation*}
commutes. 

We have a map $Z^1_Y(E, M) \to Z^1_{Y'}(E, M')$ given by $(\nu, x) \mapsto (g(\nu), f \circ x)$. This induces a map 
\begin{equation}
   H^1_Y(E, M) \to H^1_{Y'}(E, M'), 
\end{equation}
since if $(\nu, x) \sim (\nu', x')$ via $m$, then 
\begin{equation*}
f(m) \cdot g(\nu)=g(m \cdot \nu))=g(\nu'),
\end{equation*}
and
\begin{equation*}
    f(m)^{-1} f(x_w) w(f(m)) = f( m^{-1} x_w w(m) )=f(x'_w).
\end{equation*}

\subsection{Changing \texorpdfstring{$G$}{G}}
Suppose we have a map $\rho: H \to G$ and an extension
\begin{equation*}
    1 \to A \to E \to G \to 1.
\end{equation*}
Let 
\begin{equation*}
    1 \to A \to E_H \to H \to 1
\end{equation*}
be the extension defined so that $E_H=E \times_G H$ and consider the diagram of extensions given by:
    \begin{equation*}
        \begin{tikzcd}
        1 \arrow[r] & A \arrow[r] \arrow[d, equal] & E_H \arrow[r] \arrow[d, "\tilde{\rho}"] & H \arrow[r] \arrow[d, "\rho"] & 1 \\
        1 \arrow[r] & A \arrow[r] & E \arrow[r] & G \arrow[r] & 1
        \end{tikzcd}
    \end{equation*}

Then define a map $Z^1_Y(E, M) \to Z^1_Y(E_H, M)$ so that $(\nu, x) \mapsto (\nu, x_H)$ where $x_H$ is the pullback of $x$ to $E_H$. This clearly induces a map 
\begin{equation}{\label{restriction}}
H^1_Y(E, M) \to H^1_Y(E_H, M).
\end{equation}

\subsection{The map \texorpdfstring{$\Phi(f, g, \tilde{h})$}{Phi(f,g,h)}}{\label{sec: Phi}}
Suppose we have extensions
\begin{equation*}
    1 \to A \to E \to G \to 1
\end{equation*}
and
\begin{equation*}
    1 \to A' \to E' \to G \to 1
\end{equation*}
and cohomology data $(M, \ov{Y}, \xi, Y)$ and $(M', \ov{Y'}, \xi', Y')$ giving us sets $H^1_Y(E, M)$ and $H^1_{Y'}(E', M')$. Suppose further that we have the following maps:
\begin{itemize}
    \item A $G$-homomorphism $f: M \to M'$,
    \item An $M \rtimes G$-map $g: \ov{Y} \to \ov{Y'}$ such that $g(Y) \subset Y'$,
    \item A homomorphism $\tilde{h}: E \to E'$ of extensions:
    \begin{equation*}
        \begin{tikzcd}
        1 \arrow[r] & A \arrow[r] \arrow[d, "h"] & E \arrow[r] \arrow[d, "\tilde{h}"] & G \arrow[r] \arrow[d, equal] & 1 \\
        1 \arrow[r] & A' \arrow[r] & E' \arrow[r] & G \arrow[r] & 1
        \end{tikzcd}
    \end{equation*}
\end{itemize}
We further require that the following diagram commutes:
\begin{equation*}{\label{Phidiagram}}
   \begin{tikzcd}
   \ov{Y} \arrow[r, "\xi"] \arrow[d, equal] & \Hom(A, M) \arrow[d, "f \circ"] \\
   \ov{Y} \arrow[d, "g", swap] & \Hom(A, M') \\
   \ov{Y'} \arrow[r, "\xi'", swap] & \Hom(A', M') \arrow[u, "\circ h", swap]
   \end{tikzcd}
\end{equation*}
We now define $\Phi(f, g, \tilde{h}): Z^1_Y(E,M) \to Z^1_{Y'}(E', M')$ so that $(\nu, x) \mapsto (g(\nu), x')$ where $x'$ is the unique cocycle so that the restriction of $x'$ to $A'$ is equal to $\xi'(g(\nu))$ and the pullback of $x'$ to $E$ via $\tilde{h}$ equals $f(x)$. It is a tedious but straightforward check that such a cocycle exists and is unique. 

We check that $(g(\nu), x')$ satisfies $\Int(x'_{w'}) \circ \sigma(\nu) = \nu$ for $w' \in E'$ projecting to $
\sigma \in G$. Write $w'=a'\tilde{h}(e)$. Then we have
\begin{align*}
    \Int(x'_{w'}) \circ \sigma(g(\nu)) &= \Int(x'_{a'}) \circ [f(x_e) \sigma(g(\nu)) f(x_e)^{-1}]\\
    &=\Int(x'_{a'}) \circ [g( x_e\sigma(\nu)x_e^{-1})]\\
    &=\Int(x'_{a'}) \circ g(\nu) = g(\nu).
\end{align*}
The last equality follows from the condition that $\xi'(g(\nu))(A') \subset M'_{g(\nu)}$. 

\begin{lemma}{\label{phi}}
The map $\Phi(f,g,\tilde{h})$ on cocycles induces a map
\begin{equation*}
\Phi(f, g, \tilde{h}): H^1_Y(E, M) \to H^1_{Y'}(E', M').
\end{equation*}
\end{lemma}
\begin{proof}
It is an easy check that if $(\nu_1 , x_1) \sim (\nu_2, x_2)$ via $m \in M$, then $(g(\nu_1), x'_1) \sim (g(\nu_2), x'_2)$ via $f(m)$. 
\end{proof}

From the definitions, it is clear that if we have triples $(f_1, g_1, \tilde{h_1})$ between $(M, \ov{Y}, \xi, Y)$ and $(M', \ov{Y'}, \xi', Y')$ and $(f_2, g_2, \tilde{h_2})$ between $(M', \ov{Y'}, \xi', Y')$ and $(M'', \ov{Y''}, \xi'', Y'')$ then we can form a triple $(f_2 \circ f_1, g_2 \circ g_1, \tilde{h_2} \circ \tilde{h_1})$ satisfying the necessary conditions.

\begin{lemma}{\label{Philem}}
We have $\Phi(f_2 \circ f_1, g_2 \circ g_1, \tilde{h_2} \circ \tilde{h_1})= \Phi(f_2, g_2, \tilde{h_2}) \circ \Phi(f_1, g_1, \tilde{h_1})$.
\end{lemma}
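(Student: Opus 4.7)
The plan is to verify the identity at the level of cocycles, as a map $Z^1_Y(E, M) \to Z^1_{Y''}(E'', M'')$. Since $\Phi$ was already shown to descend to cohomology (i.e.\ to respect the equivalence relation, as noted in the paragraph preceding the lemma), this will suffice.

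Fix a cocycle $(\nu, x) \in Z^1_Y(E, M)$. Applying $\Phi(f_1, g_1, \tilde{h}_1)$ produces $(g_1(\nu), x'_1)$, where $x'_1$ is the \emph{unique} cocycle $E' \to M'$ satisfying $x'_1|_{A'} = \xi'(g_1(\nu))$ and $x'_1 \circ \tilde{h}_1 = f_1 \circ x$. Applying $\Phi(f_2, g_2, \tilde{h}_2)$ then yields $(g_2(g_1(\nu)), x'_2)$ with $x'_2|_{A''} = \xi''(g_2(g_1(\nu)))$ and $x'_2 \circ \tilde{h}_2 = f_2 \circ x'_1$. On the other hand, the composite datum sends $(\nu, x)$ to $((g_2 \circ g_1)(\nu), x'')$ characterized by $x''|_{A''} = \xi''((g_2 \circ g_1)(\nu))$ and $x'' \circ (\tilde{h}_2 \circ \tilde{h}_1) = (f_2 \circ f_1) \circ x$.

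The first coordinates agree on the nose. To equate the second coordinates I would invoke the uniqueness clause built into the construction of $\Phi$: a cocycle $E'' \to M''$ is determined by its restriction to $A''$ together with its pullback along a given homomorphism of extensions $E \to E''$. Both $x'_2$ and $x''$ restrict to the same map on $A''$, and for $x'_2$ one computes
\[
x'_2 \circ (\tilde{h}_2 \circ \tilde{h}_1) = (x'_2 \circ \tilde{h}_2) \circ \tilde{h}_1 = (f_2 \circ x'_1) \circ \tilde{h}_1 = f_2 \circ (x'_1 \circ \tilde{h}_1) = f_2 \circ f_1 \circ x,
\]
which is exactly the defining pullback condition for $x''$. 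Hence $x'_2 = x''$, and so the two images coincide in $Z^1_{Y''}(E'', M'')$, and therefore in $H^1_{Y''}(E'', M'')$.

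The only substantive ingredient is the uniqueness appeal, which is precisely the \emph{tedious but straightforward} fact already invoked in the definition of $\Phi$ — so there is no new obstacle. Everything else is a bookkeeping exercise in the associativity of composition, together with the obvious identity $(\tilde{h}_2 \circ \tilde{h}_1)^* = \tilde{h}_1^* \circ \tilde{h}_2^*$ on cocycles.
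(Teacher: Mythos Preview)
Your proof is correct and follows essentially the same approach as the paper, which simply notes that the result is clear from the uniqueness of $x'$ in the definition of $\Phi(f, g, \tilde{h})$; you have merely spelled out this uniqueness argument in full detail.
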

\begin{proof}
Clear from uniqueness of $x'$ in the definition of $\Phi(f, g, \tilde{h})$.
\end{proof}
Observe that the map $h: A \to A'$ induces a map $H^2(G, A) \to H^2(G, A')$ and that if $\alpha \in H^2(G,A)$ and $\alpha' \in H^2(G,A')$ are the classes of the extensions $E$ and $E'$ respectively then $h(\alpha)=\alpha'$. This follows because a $2$-cocycle giving $\alpha$ can be constructed from any section $s: G \to E$ and $h \circ s: G \to E'$ is then a section of $E'$. Conversely, a map $h: A \to A'$ and an extension
\begin{equation*}
        1 \to A \to E \to G \to 1,
    \end{equation*}
with class $\alpha \in H^2(G,A)$ yields a class $h(\alpha) \in H^2(G,A)$. Via the natural bijection between $H^2(G,A')$ and extensions of $G$ by $A'$, one constructs an extension $E'$ and a diagram:
    \begin{equation*}
        \begin{tikzcd}
        1 \arrow[r] & A \arrow[r] \arrow[d, "h"] & E \arrow[r] \arrow[d, "\tilde{h}"] & G \arrow[r] \arrow[d, equal] & 1 \\
        1 \arrow[r] & A' \arrow[r] & E' \arrow[r] & G \arrow[r] & 1
        \end{tikzcd}
    \end{equation*}
where $\tilde{h}$ is canonical up to an automorphism of the extension $E'$. In particular, if $H^1(G,A')=0$ then $\tilde{h}$ is canonical up to $A'$-conjugacy. It is easy to check that $\Phi(f,g, \tilde{h})=\Phi(f,g, \tilde{h}')$ for $\tilde{h}$ and $\tilde{h}'$ in the same $A'$-conjugacy class. Hence we have proven that if $H^1(G,A')=0$ then $\Phi(f,g,\tilde{h})$ only depends on $f,g,h$ and the extensions $E, E'$. If we furthermore have $H^1(G,A)=0$, then by the remark before Key Example \ref{keyexample} the map $\Phi(f, g, \tilde{h})$ only depends on $f,g,h$ and the $\alpha \in H^2(G,A)$ and $\alpha' \in H^2(G,A')$. In particular, whenever we have maps $f,g,h$ such that Diagram \eqref{Phidiagram} commutes and $h(\alpha)=\alpha'$ for any classes in the relative cohomology groups, there exists a canonical map $\Phi(f,g,h): H^1_Y(E,M) \to H^1_{Y'}(E',M')$ where $E$ and $E'$ are any extensions representing the relevant cohomology classes.
\subsection{\texorpdfstring{The map $\Psi(g, \tilde{h})$}{Psi(g,h)}}{\label{Psi}}
Suppose we have extensions 
\begin{equation*}
    1 \to A \to E \to G \to 1
\end{equation*}
and
\begin{equation*}
    1 \to A' \to E' \to G \to 1
\end{equation*}
with cohomology data  $(M, \ov{Y}, \xi, Y)$ and $(M', \ov{Y'}, \xi, Y')$ as before. Suppose further that $M=M'$ and that we have the following data:
\begin{itemize}
    \item an $M \rtimes G$-map $g: \ov{Y} \to \ov{Y'}$ with $g(Y) \subset Y'$,
    \item a homomorphism $\tilde{p}: E' \to E$ of extensions:
        \begin{equation*}
        \begin{tikzcd}
        1 \arrow[r] & A \arrow[r]  & E \arrow[r] & G \arrow[r] \arrow[d, equal] & 1 \\
        1 \arrow[r] & A' \arrow[u, "p"] \arrow[r] & E' \arrow[r] \arrow[u, "\tilde{p}"] & G \arrow[r] & 1
        \end{tikzcd}
    \end{equation*}
\end{itemize}
such that the following diagram commutes
\begin{equation*}
    \begin{tikzcd}
    Y \arrow[d, "g", swap] \arrow[r, "\xi"] & \Hom(A, M) \arrow[d, "\circ p"] \\
    Y' \arrow[r, "\xi'", swap] & \Hom(A', M)
    \end{tikzcd}
\end{equation*}

We define 
\begin{equation}
   \Psi(g, \tilde{p}) : H^1_Y(E, M) \to H^1_{Y'}(E', M) 
\end{equation}
as a map on cocycles by $(\nu, x) \mapsto (g(\nu), x')$ where $x'$ is the pullback of $x$ via $\tilde{p}$. If $(\nu_1, x_1) \sim (\nu_2, x_2)$ via $m$, then we also have $(g(\nu_1), x'_1) \sim (g(\nu_2), x'_2)$ via $m$.

It's clear that if $(g_1, \tilde{p_1})$ and $(g_2, \tilde{p_2})$ are tuples satisfying the requisite conditions then so are $(g_2 \circ g_1, \tilde{p_2} \circ \tilde{p_1})$. Moreover, we have
\begin{lemma}{\label{Psilem}}
$\Psi(g_2, \tilde{p_2}) \circ \Psi(g_1, \tilde{p_1}) = \Psi( g_2 \circ g_1, \tilde{p_1} \circ \tilde{p_2})$
\end{lemma}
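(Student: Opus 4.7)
The plan is to verify the identity directly at the level of cocycles, since the $\Psi$ construction is defined as pullback along $\tilde{p}$ paired with the set map $g$ on the $Y$-component, and both of these operations compose strictly (not just up to equivalence). Because the equivalence relation on cocycles is respected by $\Psi$ (as noted in the definition of $\Psi(g, \tilde{p})$), passing to the quotient at the end will then yield the claim.

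Concretely, I would take a representative $(\nu, x) \in Z^1_Y(E, M)$ and chase it through both sides. Under $\Psi(g_1, \tilde{p_1})$ it maps to $(g_1(\nu), x \circ \tilde{p_1}) \in Z^1_{Y'}(E', M)$, and then under $\Psi(g_2, \tilde{p_2})$ to $\bigl(g_2(g_1(\nu)), (x \circ \tilde{p_1}) \circ \tilde{p_2}\bigr) \in Z^1_{Y''}(E'', M)$. On the other side, $\Psi(g_2 \circ g_1, \tilde{p_1} \circ \tilde{p_2})$ sends $(\nu, x)$ to $\bigl((g_2 \circ g_1)(\nu), x \circ (\tilde{p_1} \circ \tilde{p_2})\bigr)$. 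These two cocycles are literally equal by associativity of composition of set maps and group homomorphisms, so the maps agree already on $Z^1$.

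Before concluding, I would briefly verify that $(g_2 \circ g_1, \tilde{p_1} \circ \tilde{p_2})$ actually satisfies the compatibility hypothesis needed for $\Psi$ to be defined, namely commutativity of the diagram relating $\xi$ and $\xi''$ via $\circ (p_1 \circ p_2)$. But this is immediate from stacking the two commutative squares for $(g_1, \tilde{p_1})$ and $(g_2, \tilde{p_2})$ vertically, together with the observation that the map on the $A$'s induced by $\tilde{p_1} \circ \tilde{p_2}$ is $p_1 \circ p_2$ (since the $G$-quotients are all identified).

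There is no real obstacle here: the lemma is a bookkeeping check analogous to Lemma \ref{Philem}, and indeed the analogue of the one-line proof given there (\textquotedblleft clear from the definition\textquotedblright) applies. The only slight subtlety compared to $\Phi$ is that $\Psi$ involves pullback in the contravariant direction, which is why the order of the $\tilde{p}$'s reverses in the composition formula; I would remark on this explicitly to make clear that the stated ordering $\tilde{p_1} \circ \tilde{p_2}$ is correct.
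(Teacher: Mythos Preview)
Your proposal is correct and is exactly the natural unpacking of the paper's one-word proof (``Clear.''): the $\Psi$ construction is literally given by $(g(\nu), x\circ\tilde{p})$, so composing two such maps and comparing with the combined data is an immediate associativity check, precisely as you wrote. Your extra remarks about the compatibility diagram and the reversed order of the $\tilde{p}$'s are useful clarifications and do not go beyond the paper's intended argument.
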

\begin{proof}
Clear.
\end{proof}
It is easy to check that we have $\Psi(g, \tilde{p})=\Psi(g, \tilde{p}')$ if $\tilde{p}$ and $\tilde{p}'$ are in the same $A$-conjugacy class. Hence, as we noted for $\Phi$, when $H^1(G,A)=0$ the map $\Psi(g, \tilde{p})$ only depends on $g,p$ and when $H^1(G,A')=0$ as well, then whenever we have maps $g$ and $p$ and classes $\alpha \in H^2(G,A), \alpha' \in H^2(G,A')$ such that $p(\alpha')=
\alpha$, we have a canonical map up to canonical isomorphism $\Phi(g,p): H^1_Y(E,M) \to H^1_{Y'}(E', M)$ where $E$ and $E'$ are any representatives of the classes $\alpha, \alpha'$.
\subsection{Compatibility of \texorpdfstring{$\Phi$ and $\Psi$ and change of $G$}{Phi and Psi}}
We first check that $\Phi$ commutes with change of $G$. Suppose we have maps $\Phi(f,g,\tilde{h})$ as in \S\ref{sec: Phi} and $\rho: H \to G$. Then we have a commutative diagram of extensions:
\begin{equation*}
 \begin{tikzcd}
 E  \arrow[r, "\tilde{h}"] & E'  \\
 E_H \arrow[u, "\tilde{\rho}"] \arrow[r, "\tilde{h}", swap] & E'_H \arrow[u, "\tilde{\rho}", swap]
\end{tikzcd} 
\end{equation*}
where the map $E_H \to E'_H$ is the unique map so that the above diagram commutes. 
\begin{lemma}
In the above setup, we get a commutative diagram
\begin{equation*}
 \begin{tikzcd}
 H^1_Y(E,M)  \arrow[r, "{\Phi(f,g,\tilde{h})}"] \arrow[d, "\tilde{\rho}"] & H^1_{Y'}(E', M') \arrow[d, "\tilde{\rho}", swap]  \\
 H^1_Y(E_H,M)  \arrow[r, "{\Phi(f,g,\tilde{h})}", swap] & H^1_{Y'}(E'_H, M') 
\end{tikzcd} 
\end{equation*}
\end{lemma}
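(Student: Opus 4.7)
The plan is to verify the diagram commutes at the level of cocycles and invoke the uniqueness clause built into the definition of $\Phi(f,g,\tilde{h})$.

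First I would fix $(\nu, x) \in Z^1_Y(E, M)$ and chase it around both sides. Along the right-then-down path, $\Phi(f,g,\tilde{h})$ produces $(g(\nu), x')$, where $x'$ is the unique cocycle on $E'$ satisfying $x'|_{A'} = \xi'(g(\nu))$ and $x' \circ \tilde{h} = f(x)$; then change of $G$ via $\tilde{\rho}: E'_H \to E'$ pulls this back to $(g(\nu),\, x' \circ \tilde{\rho})$. Along the down-then-right path, change of $G$ via $\tilde{\rho}: E_H \to E$ first produces $(\nu,\, x \circ \tilde{\rho}) \in Z^1_Y(E_H, M)$, and then $\Phi(f,g,\tilde{h})$ yields $(g(\nu), y')$, where $y'$ is the unique cocycle on $E'_H$ with $y'|_{A'} = \xi'(g(\nu))$ and $y' \circ \tilde{h} = f(x \circ \tilde{\rho}) = f(x) \circ \tilde{\rho}$.

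The key step is then to show $y' = x' \circ \tilde{\rho}$ by verifying the two uniqueness conditions. Since the vertical arrows in the commutative square of extensions restrict to the identity on $A$ and $A'$, we have $(x' \circ \tilde{\rho})|_{A'} = x'|_{A'} = \xi'(g(\nu))$. Using the commutativity of the square of extensions, namely $\tilde{h} \circ \tilde{\rho} = \tilde{\rho} \circ \tilde{h}$ as maps $E_H \to E'$, we compute
\begin{equation*}
(x' \circ \tilde{\rho}) \circ \tilde{h} \;=\; x' \circ (\tilde{\rho} \circ \tilde{h}) \;=\; x' \circ (\tilde{h} \circ \tilde{\rho}) \;=\; (x' \circ \tilde{h}) \circ \tilde{\rho} \;=\; f(x) \circ \tilde{\rho}.
\end{equation*}
Both uniqueness conditions defining $y'$ are thus satisfied by $x' \circ \tilde{\rho}$, so $y' = x' \circ \tilde{\rho}$. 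This gives the equality of both paths on cocycles, which descends to the claimed equality on $H^1_Y$.

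The only real subtlety to keep track of is that the symbol $\tilde{\rho}$ is used in two distinct places (for $E_H \to E$ and $E'_H \to E'$), and one must verify the compatibility square $\tilde{h} \circ \tilde{\rho} = \tilde{\rho} \circ \tilde{h}$ — which is exactly the content of the commutative square displayed just before the lemma. Once this bookkeeping is set up, the argument reduces to the uniqueness of $x'$ already noted in the proof of Lemma \ref{Philem}, so no further technical obstacle arises.
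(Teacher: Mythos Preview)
Your proof is correct and follows essentially the same approach as the paper: both chase a cocycle around the square and verify that $x' \circ \tilde{\rho}$ satisfies the uniqueness conditions defining the output of $\Phi(f,g,\tilde{h})$ applied after pullback, using the commutativity $\tilde{h} \circ \tilde{\rho} = \tilde{\rho} \circ \tilde{h}$ of the square of extensions. If anything, your write-up is slightly more explicit than the paper's, since you separately verify the restriction-to-$A'$ condition while the paper leaves that implicit.
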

\begin{proof}
The left vertical map takes a a cocycle $(\nu, x)$ to one of the form $(\nu, x \circ \tilde{\rho})$ and the bottom map takes this to the unique cocycle $(g(\nu), (x \circ \tilde{\rho})')$ such that $(x \circ \tilde{\rho})'$ pulls back to $x \circ \tilde{\rho}$ via $\tilde{h}$. Hence it suffices to show that the cocycle $(g(\nu), x' \circ \tilde{\rho})$ also has this property. For $e \in E_H$, we have $(x' \circ \tilde{\rho})(\tilde{h}(e))= x'(\tilde{h}(\tilde{\rho}(e)))=(x \circ \tilde{\rho})(e)$ as desired.
\end{proof}

We now check the compatibility of $\Psi$ with change of $G$. We suppose we have maps $\Psi(g, \tilde{p})$ and $\rho: H \to G$ giving a diagram of extensions
\begin{equation*}
 \begin{tikzcd}
 E   & E' \arrow[l, "\tilde{p}", swap] \\
 E_H \arrow[u, "\tilde{\rho}"]  & E'_H \arrow[l, "\tilde{p}"]\arrow[u, "\tilde{\rho}", swap]
\end{tikzcd} 
\end{equation*}
such that $E'_H=E' \times_G H$ and the map $E'_H \to E_H$ is induced by the composition of $\tilde{p}$ and the projection given by $E'_H \to E' \to E$, as well as the projection $E'_H \to H$. 
\begin{lemma}
In the above setup, we get a commutative diagram
\begin{equation*}
 \begin{tikzcd}
 H^1_Y(E,M)  \arrow[r, "{\Psi(g,\tilde{p})}"] \arrow[d, "\tilde{\rho}"] & H^1_{Y'}(E', M') \arrow[d, "\tilde{\rho}", swap]  \\
 H^1_Y(E_H,M)  \arrow[r, "{\Psi(g,\tilde{p})}", swap] & H^1_{Y'}(E'_H, M') 
\end{tikzcd} 
\end{equation*}
\end{lemma}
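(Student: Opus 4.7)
The plan is to verify commutativity directly on cocycle representatives, mimicking the proof of the analogous statement for $\Phi$ that immediately precedes this lemma. Both composites produce cocycles of the form $(g(\nu), \text{something})$, so the whole content of the claim lies in identifying the second coordinate.

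I would first unwind the definitions. The map $\Psi(g,\tilde{p})$ sends a representative $(\nu,x)$ to $(g(\nu), x \circ \tilde{p})$ by construction, and the change-of-$G$ map \eqref{restriction} sends $(\mu, y) \mapsto (\mu, y \circ \tilde{\rho})$. Chasing $(\nu, x) \in Z^1_Y(E,M)$ along the top and then the right edge gives $(g(\nu),\, x \circ \tilde{p} \circ \tilde{\rho}')$, where $\tilde{\rho}' : E'_H \to E'$ is the right-hand vertical map in the extension diagram. Chasing along the left edge and then the bottom gives $(g(\nu),\, x \circ \tilde{\rho} \circ \tilde{p}')$, where $\tilde{p}' : E'_H \to E_H$ is the bottom horizontal map in the extension diagram.

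Thus the lemma reduces to the identity
\begin{equation*}
    \tilde{p} \circ \tilde{\rho}' \;=\; \tilde{\rho} \circ \tilde{p}' \colon E'_H \longrightarrow E,
\end{equation*}
which is precisely the commutativity of the outer square in
\begin{equation*}
 \begin{tikzcd}
 E   & E' \arrow[l, "\tilde{p}", swap] \\
 E_H \arrow[u, "\tilde{\rho}"]  & E'_H \arrow[l, "\tilde{p}'"]\arrow[u, "\tilde{\rho}'", swap]
\end{tikzcd}
\end{equation*}
This holds by construction of $E'_H = E' \times_G H$: the map $\tilde{p}' : E'_H \to E_H$ is defined by pairing the composite $E'_H \to E' \xrightarrow{\tilde{p}} E$ with the projection $E'_H \to H$, so postcomposing with $\tilde{\rho}$ (which is just $E_H \to E$) recovers $\tilde{p} \circ \tilde{\rho}'$. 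I expect no real obstacle here, since the argument is essentially symmetric to the one given for $\Phi$ just above; one only has to be careful to distinguish the two different uses of the symbols $\tilde{\rho}$ and $\tilde{p}$ at the $G$- and $H$-levels. The compatibility of the equivalence relation under these pullbacks is automatic, as in the $\Phi$ case, because the equivalence in both $H^1_Y(E_H, M)$ and $H^1_{Y'}(E'_H, M')$ is implemented by elements of $M$ (not $E$ or $E'$), which are unaffected by pullback along $\tilde{\rho}$ or $\tilde{p}$.
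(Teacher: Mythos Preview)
Your proof is correct and follows the same approach as the paper: the paper's proof simply states that commutativity is immediate because all maps are defined via pullback and the extension diagram commutes, which is exactly what you unwind explicitly. Your version is more detailed but the underlying idea is identical.
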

\begin{proof}
This is immediate from the fact that all maps are defined via pullback and that the above diagram of extensions commutes.
\end{proof}

Finally, we check the following compatibility of $\Phi$ and $\Psi$. Suppose we have the following commutative diagram of extensions
\begin{equation*}
    \begin{tikzcd}
     E \arrow[d, "\tilde{h}", swap] & E_1 \arrow[l, "\tilde{p}", swap] \arrow[d, "\tilde{h_1}"] \\
     E' & E'_1 \arrow[l, "\tilde{p'}"]
    \end{tikzcd}
\end{equation*}
a $G$-homomorphism $f: M \to M'$, and a diagram
\begin{equation*}
\begin{tikzcd}
    \ov{Y} \arrow[d, "g", swap] \arrow[r, "g'"] & \ov{Y_1} \arrow[d, "g_1"]\\
    \ov{Y'} \arrow[r, "g''"] & \ov{Y'_1}
\end{tikzcd}
\end{equation*}
such that $g, g', g_1$ are maps of $M \rtimes G$-sets, $g'_1$ is a map of $M' \rtimes G$-sets, and $g(Y) \subset Y', g'(Y) \subset Y_1, g_1(Y_1) \subset Y'_1, g''(Y') \subset Y'_1$. We further assume that $(f, g, \tilde{h}), (f, g_1, \tilde{h_1})$ satisfy the requirements of the definition of $\Phi$ and $(g', \tilde{p}), (g'', \tilde{p'})$ satisfy the requirements as in the definition of $\Psi$.
\begin{lemma}{\label{PhiPsilem}}
Under the above assumptions, the following diagram commutes:
\begin{equation*}
\begin{tikzcd}
H^1_Y(E, M) \arrow[r, "{\Psi(g',\tilde{p})}"] \arrow[d, swap, "{\Phi(f,g,\tilde{h})}"] & H^1_{Y_1}(E_1, M) \arrow[d, "{\Phi(f,g_1,\tilde{h_1})}"] \\
H^1_{Y'}(E', M') \arrow[r, swap, "{\Psi(g'',\tilde{p'})}"] & H^1_{Y'_1}(E'_1, M')
\end{tikzcd}
\end{equation*}
\end{lemma}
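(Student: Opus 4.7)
The plan is to chase the diagram at the level of cocycles, and to show that the two compositions already agree as elements of $Z^1_{Y'_1}(E'_1, M')$, not merely up to equivalence. Everything will reduce to the uniqueness clause in the definition of $\Phi$.

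First I would compute the right-then-down path applied to a representative $(\nu, x) \in Z^1_Y(E, M)$. By definition, $\Psi(g', \tilde{p})$ sends $(\nu, x)$ to $(g'(\nu), x \circ \tilde{p})$. Then $\Phi(f, g_1, \tilde{h_1})$ sends the result to $(g_1(g'(\nu)), y)$, where $y \in Z^1(E'_1, M')$ is the unique cocycle such that $y|_{A'_1} = \xi'_1\bigl(g_1(g'(\nu))\bigr)$ and $y \circ \tilde{h_1} = f(x \circ \tilde{p}) = f(x) \circ \tilde{p}$. Next I would compute the down-then-right path: $\Phi(f, g, \tilde{h})$ sends $(\nu, x)$ to $(g(\nu), x')$ with $x'|_{A'} = \xi'(g(\nu))$ and $x' \circ \tilde{h} = f(x)$, and $\Psi(g'', \tilde{p'})$ then sends this to $\bigl(g''(g(\nu)), \, x' \circ \tilde{p'}\bigr)$.

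The $\overline{Y}$-coordinates match because the assumed square of $\overline{Y}$'s commutes: $g_1(g'(\nu)) = g''(g(\nu))$. For the cocycle coordinates, by uniqueness of $y$, it suffices to check that $x' \circ \tilde{p'}$ satisfies the two properties defining $y$. For the restriction to $A'_1$: since $\tilde{p'}$ restricts to $p' : A'_1 \to A'$, we have $(x' \circ \tilde{p'})|_{A'_1} = \xi'(g(\nu)) \circ p'$, which by the $\Psi$-compatibility hypothesis for $(g'', \tilde{p'})$ (applied to the point $g(\nu) \in Y'$, which is legitimate since $g(Y) \subset Y'$) equals $\xi'_1(g''(g(\nu))) = \xi'_1(g_1(g'(\nu)))$. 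For the pullback along $\tilde{h_1}$: commutativity of the extension diagram yields $\tilde{p'} \circ \tilde{h_1} = \tilde{h} \circ \tilde{p}$, so
\begin{equation*}
x' \circ \tilde{p'} \circ \tilde{h_1} \;=\; x' \circ \tilde{h} \circ \tilde{p} \;=\; f(x) \circ \tilde{p},
\end{equation*}
which is exactly the required pullback. Hence $x' \circ \tilde{p'} = y$ on the nose, and the two paths agree in $Z^1_{Y'_1}(E'_1, M')$.

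The argument is almost entirely bookkeeping, and no serious obstacle arises; the one thing to be careful about is verifying that each compatibility hypothesis is being applied at a point where it is valid. Specifically, the $\Psi$-compatibility for $(g'', \tilde{p'})$ is only assumed on $Y'$, not on all of $\overline{Y'}$, so one must observe that $g(\nu) \in Y'$ by the hypothesis $g(Y) \subset Y'$ before invoking it. Once this is noted, the uniqueness clause of $\Phi$ finishes the proof, and no cohomological normalization by an element of $M'$ is needed.
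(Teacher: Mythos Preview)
Your proof is correct and is precisely the straightforward cocycle-level verification the paper alludes to; the paper itself does not spell out the argument beyond calling it ``straightforward but somewhat tedious to check.'' Your observation that the $\Psi$-compatibility for $(g'',\tilde{p'})$ is only stated on $Y'$ (so one must first note $g(\nu)\in Y'$) is a nice point of care that the paper leaves implicit.
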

\begin{proof}
This is straightforward but somewhat tedious to check.
\end{proof}

\subsection{Localization}{\label{BGloc}}
Fix a finite Galois extension $K/F$ of global fields and a connected reductive group $G$. Let $v$ be a place of $K$ over a place $u$ of $F$. We now study the localization of the set $H^1_{\alg}(\mc{E}_2(K/F), G(\A_K))$ introduced in Definition \ref{adeliccoeffs}. We let $E^v \subset K$ be the fixed field of the decomposition group at $v$ of $\Gal(K/F)$. Then $\Gal(K/E^v)=\Gal(K_v/F_u)$ and hence acts on $D_2(K_v)$ such that the natural projection $\pi: D_2(\A_K) \to D_2(K_v)$ is equivariant with respect to the $\Gal(K/E^v)$-action. Following Kottwitz, we define the gerb $\mc{E}^v_2(K/E^v)$ via pushout as $\mc{E}^v_2(K/E^v) := D_2(K_v) \rtimes \mc{E}_2(K/E^v) / N$ where $N=\{ (\pi(d)^{-1}, \iota(d)) : d \in D_2(\A_K)\}$, giving the following commutative diagram of extensions
\begin{equation}
\begin{tikzcd}
1 \arrow[r] & D_2(\A_K) \arrow[r] & \mc{E}_2(K/F) \arrow[r] & \Gal(K/F) \arrow[r] & 1\\
1 \arrow[r] & D_2(\A_K) \arrow[r, "\iota"] \arrow[d, "\pi"] \arrow[u, equal] & \mc{E}_2(K/E^v) \arrow[r] \arrow[d, "\tilde{\pi}"] \arrow[u, "j"] & \Gal(K/E^v) \arrow[r] \arrow[d, equal] \arrow[u, hook] & 1\\
1 \arrow[r] & D_2(K_v) \arrow[r] & \mc{E}^v_2(K/E^v) \arrow[r] & \Gal(K/E^v) \arrow[r] & 1.
\end{tikzcd}
\end{equation}
By restriction as in Equation \eqref{restriction} we get a natural map $H^1_{\alg}(\mc{E}_2(K/F) , G(\A_K)) \to H^1_{\alg}(\mc{E}_2(K/E^v), G(\A_K))$. Now define $\ov{Y_v}=Y_v= \Hom_{K_v}(D_2, G)$ and then define $H^1_{\alg}(\mc{E}^v_2(K/E^v), G(K_v))$ to equal $H^1_{Y_v}(\mc{E}^v_2(K/E^v), G(K_v))$. Let $g: \ov{Y_2} \to \ov{Y_v}$ be the base-change map and $f: G(\A_K) \to G(K_v)$ be the natural projection. Then we get a map $\Phi(f, g, \tilde{\pi}): H^1_{\alg}(\mc{E}_2(K/E^v), G(\A_K)) \to H^1_{\alg}(\mc{E}^v_2(K/E^v), G(K_v))$ as in Equation \eqref{phi}. By composing these maps we have now constructed a map
\begin{equation*}
    H^1_{\alg}(\mc{E}_2(K/F), G(\A_K)) \to H^1_{\alg}(\mc{E}^v_2(K/E^v), G(K_v)).
\end{equation*}

We now construct a map 
\begin{equation*}
H^1_{\alg}(\mc{E}^v_2(K/E^v), G(K_v)) \to H^1_{\alg}(\mc{E}_{\iso}(K_v/F_u), G(K_v)),
\end{equation*}
where $H^1_{\alg}(\mc{E}_{\iso}(K_v/F_u), G(K_v))$ is defined as in Key example \ref{keyexample}. For each place $v \in V_K$, we have a natural map $\mu_v : \Gm \to D_2$ coming from the map of character groups $\Z[V_K] \to \Z$ given by projecting to the $v$th coordinate.  In \cite[Remark 7.2]{Kot9}, Kottwitz shows there is a map $\tilde{\mu_v}$ making the following diagram commute.
\begin{equation}
\begin{tikzcd}
1 \arrow[r] & \Gm(K_v) \arrow[r] \arrow[d, "\mu_v"] & \mc{E}_{\iso}(K_v/F_u) \arrow[r] \arrow[d, "\tilde{\mu_v}"] & \Gal(K_v/F_u) \arrow[r] \arrow[d, equal] & 1\\
1 \arrow[r] & D_2(K_v) \arrow[r] & \mc{E}^v_2(K/E^v) \arrow[r] & \Gal(K/E^v) \arrow[r] & 1.
\end{tikzcd}
\end{equation}
Then $\mu_v$ induces a map $g: \ov{Y_v} \to \ov{Y_{\iso}}$ and this gives a map $\Psi(g, \tilde{\mu_v}): H^1_{\alg}(\mc{E}^v_2(K/E^v), G(K_v)) \to H^1_{\alg}(\mc{E}_{\iso}(K_v/F_u), G(K_v))$ as in Equation \eqref{Psi}.

Composing with our earlier map gives the localization map
\begin{equation}{\label{localization}}
  l^F_u:  H^1_{\alg}(\mc{E}_2(K/F), G(\A_K)) \to H^1_{\alg}(\mc{E}_{\iso}(K_v/F_u), G(K_v)).
\end{equation}

\subsection{Total Localization Map}{\label{totalloc}}
    We now want to check that we can promote the localization map defined in the previous subsection to a map:
\begin{equation}{\label{totallocalization}}
   l^F:  H^1_{\alg}(\mc{E}_2(K/F), G(\A_K)) \to \bigoplus\limits_{u \in V_F} H^1_{\alg}(\mc{E}_{\iso}(K_v/F_u), G(K_v)),
\end{equation}
where on the righthand side we choose for each $u \in V_F$ a $v \in V_K$ over $u$. The right-hand side is a direct sum of pointed sets consisting of tuples $(s_u)_u$ such that at all but finitely many $u$, $s_u$ equals the distinguished point $t_u$.

To do so, it suffices to show that for each $[\nu, x] \in H^1_{\alg}(\mc{E}_2(K/F), G(\A_K))$, its image in $H^1_{\alg}(\mc{E}_{\iso}(K_v/F_u), G(K_v))$ is trivial for almost all $v$. To prove this result, we emulate Kottwitz's argument in \cite[\S 14]{Kot9}. In fact, the reader will note that the argument in this subsection is nothing more than a detailed verification that Kottwitz's argument goes through in our setting.

To begin, we recall the setup of \cite[\S 14]{Kot9}. We let $K/F$ be a finite Galois extension of global fields. For a place $v \in K$, we often write $\Gal(K/F)_v$ for the decomposition group of $\Gal(K/F)$ at $v$. We let $V_F$ denote the set of places of $F$. For any subset $S \subset V_F$ we denote by $S_K$ the pre-image of $S$ under the surjection $V_K \twoheadrightarrow V_F$. We let $S_{\infty}$ denote the set of infinite places of $F$. If $S_{\infty} \subset S$ then we have
\begin{itemize}
    \item $F_S := \{ x \in F : x \in \mc{O}_{F_u} \forall u \in V_F \setminus S\}$,
    \item $K_S := \{ x \in K:  x \in \mc{O}_{K_v} \forall v \in V_K \setminus S_K\}$,
    \item $\A_{K,S} := \{ x \in \A_K:  x_v \in \mc{O}_{K_v} \forall v \in V_K \setminus S_K\}$.
\end{itemize}
We define $D_{i, S}$ for $i=1,2,3$ to be the pro-tori with character groups $X_1(S) :=\Z, X_2(S) := \Z[S_K], X_3(S) := \Z[S_K]_0$ respectively. 

We have the following lemma
\begin{lemma}{\label{vanishlem}}
Let $S \subset V_F$ be any subset such that we have equality of the following sets:
\begin{equation*}
    \{ \Gal(K/F)_w : w \in S_K\} = \{\Gal(K/F)_w: w \in V_K\}. 
\end{equation*}
Then
\begin{itemize}
    \item For every subgroup $G' \subset \Gal(K/F)$, we have $H^1(G', D_{2,S}(\A_K))=0$.
    \item For every place $v$ of $K$, we have $H^1(\Gal(K_v/F_u) , D_{2,S}(K_v))=0$
\end{itemize}
\end{lemma}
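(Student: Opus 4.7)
The plan is to reduce both vanishing statements to Hilbert~90 (globally for the id\`ele group, locally for $K_v^\times$) by decomposing the character module $\Z[S_K]$ into $\Gal$-orbits and applying Shapiro's lemma factor by factor.

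For the first bullet, I would partition $S_K$ into orbits under $G := \Gal(K/F)$. These are indexed by $u \in S$, and the orbit above $u$ is $G$-equivariantly $G/G_v$ for a chosen $v \mid u$ with decomposition group $G_v$. Decomposing $\Z[S_K]$ accordingly identifies
\[
D_{2,S}(\A_K) = \Hom(\Z[S_K], \A_K^\times) \cong \prod_{u \in S} \Ind_{G_v}^G \A_K^\times
\]
as $G$-modules (with $\A_K^\times$ restricted to $G_v$ on the right). Since $H^1$ of a finite group commutes with arbitrary direct products of modules (visible on the bar complex), it suffices to handle each factor. The Mackey formula applied to the restriction along $G' \hookrightarrow G$, together with Shapiro's lemma, reduces the question to $H^1(H, \A_K^\times) = 0$ for arbitrary subgroups $H \leq G$. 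That in turn is Hilbert~90 for id\`eles applied to the extension $K/K^H$: regroup $\A_K^\times$ by places $u'$ of $K^H$, recognize $\prod_{w \mid u'} K_w^\times$ as an induced module from the local decomposition group, invoke Shapiro once more, and conclude by local Hilbert~90 on each factor; the restricted product structure is harmless since $H^1$ of the local unit groups $\mc{O}_{K_w}^\times$ vanishes at unramified places.

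For the second bullet, the same template applies, but now with the subgroup $G_u := \Gal(K_v/F_u)$ acting on $S_K$ via restriction of the $G$-action. Decomposing $S_K$ into $G_u$-orbits of the form $G_u \cdot w$ with stabilizers $G_u \cap G_w$ yields
\[
D_{2,S}(K_v) \cong \prod_O \Ind_{G_u \cap G_w}^{G_u} K_v^\times.
\]
Shapiro's lemma then reduces the problem to $H^1(G_u \cap G_w, K_v^\times) = 0$, which is immediate from local Hilbert~90 applied to $K_v/K_v^{G_u \cap G_w}$.

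The only mildly technical point is the commutation of $H^1$ with the potentially infinite product indexed by $u \in S$, but this is purely formal. I do not expect the set-theoretic hypothesis comparing decomposition groups of $S_K$ and $V_K$ to enter this particular lemma; presumably it is needed elsewhere, when $D_{2,S}$ is employed to approximate $D_2$ and one requires $S$ to see every local gerb onto which the total localization map lands.
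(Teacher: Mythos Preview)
Your proof is correct and follows essentially the same route as the paper: decompose the permutation module $\Z[S_K]$ into orbits, apply Shapiro's lemma, and invoke Hilbert~90 for $\A_K^\times$ and $K_v^\times$. The paper phrases the first bullet by directly decomposing $S_K$ into $G'$-orbits rather than going through $G$-orbits plus Mackey, and for the second bullet it simply cites \cite[Lemma~14.4]{Kot9} where you supply the argument yourself; the underlying mechanism is identical. Your remark that the set-theoretic hypothesis on decomposition groups is not actually used in the proof is correct---the paper's argument likewise never invokes it---and indeed it is there for use elsewhere (it appears in the full statement of \cite[Lemma~14.4]{Kot9} and is one of the conditions on the ``adequate'' sets $S$ introduced immediately afterward).
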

\begin{proof}
To prove the first statement, we note that similarly to \cite[Lemma 6.2]{Kot9}, we have a canonical isomorphism
\begin{equation*}
    H^1(G', D_{2,S}(\A_K) ) = \prod\limits_{[v] \in (S_K/G')} H^1(G'_v , \A^{\times}_K),
\end{equation*}
where $v \in S_K$ is some lift of $[v]$. For each $v$, we let $F^v \subset K$ be the fixed field of $G'_v$. Then by a standard argument involving Hensel's lemma and Lang's theorem we have an isomorphism
\begin{equation*}
    H^1(G'_v, \A^{\times}_K) = \bigoplus\limits_{w \in V_{F^v}} H^1((G'_v)_{w_K}, K^{\times}_{w_K}),
\end{equation*}
where for each $w \in V_{F^v}$, we have that $w_K$ is some chosen place of $K$ over $w$. The groups on the right all vanish by Hilbert's Theorem 90, which proves the first claim.

The second statement is deduced in the proof of the second part of \cite[Lemma 14.4]{Kot9}.
\end{proof}

We now restrict to those $S \subset V_F$ satisfying the properties of \cite[\S6.1]{Kot9}. Namely:
\begin{itemize}
    \item $S$ contains all infinite places
    \item $S$ contains finite places that ramify in $K$.
    \item For every intermediate field $E$ of $K/F$, every ideal class of $E$ contains an ideal with support in $S_E$.
\end{itemize}
We define  $A_2(S)= \A^{\times}_{K,S}, A_3(S)= K^{\times}_S$ and $A_1(S)$ to be the set of $S_K$ idele classes of $K$. We have a short exact sequence
\begin{equation*}
    1 \to A_3(S) \to A_2(S) \to A_1(S) \to 1.
\end{equation*}
Finally we define the set $\Hom(X,A)$ to be the subgroup of $\Hom(X_1, A_1) \times \Hom(X_2,A_2) \times \Hom(X_3, A_3)$ consisting of triples $(h_1, h_2, h_3)$ such that the following diagram commutes:
\begin{equation*}
\begin{tikzcd}
X_3 \arrow[r] \arrow[d, "h_3"] & X_2\arrow[r] \arrow[d, "h_2"] & X_1 \arrow[d, "h_1"]\\
A_3 \arrow[r] & A_2 \arrow[r]& A_1
\end{tikzcd}    
\end{equation*}
Then Tate \cite{Tatecohomology} defines a canonical class $\alpha \in H^2(\Gal(K/F), \Hom(X, A))$ and defines the classes $\alpha_i \in H^2(\Gal(K/F), \Hom(X_i, A_i)$ via the projections $\pi_i: \Hom(X,A) \to \Hom(X_i, A_i)$. For each $S$, there are analogous constructions and we get similarly $\alpha_i(S) \in H^2(\Gal(K/F), \Hom(X_i(S), A_i(S))$. We then have natural maps $p^S_i$ fitting into a diagram
\begin{equation*}
\begin{tikzcd}
0 \arrow[r] & X_3(S) \arrow[r] \arrow[d, "p^S_3"] & X_2(S) \arrow[r] \arrow[d, "p^S_2"] & X_1(S) \arrow[r] \arrow[d, "p^S_1"] & 0\\
0 \arrow[r] & X_3 \arrow[r] & X_2 \arrow[r] & X_1 \arrow[r]& 0
\end{tikzcd}
\end{equation*}
and hence inducing a a morphism $p^S$ of exact sequences. We also have maps $k^S_i$ and a morphism $k^S$ of exact sequences
\begin{equation*}
\begin{tikzcd}
0 \arrow[r] & A_3(S) \arrow[r] \arrow[d, "k^S_3"] & A_2(S) \arrow[r] \arrow[d, "k^S_2"] & A_1(S) \arrow[r] \arrow[d, "k^S_1"] & 0\\
0 \arrow[r] & A_3 \arrow[r] & A_2 \arrow[r] & A_1 \arrow[r]& 0
\end{tikzcd}
\end{equation*}
We now record a lemma of Kottwitz comparing $\alpha(S)$ and $\alpha$. Observe that we have the following diagram
\begin{equation*}
\begin{tikzcd}
\Hom(X(S), A(S)) \arrow[r, "k^S"] \arrow[d, "\pi_i", swap] & \Hom(X(S),A) \arrow[d, "\pi_i", swap]& \Hom(X,A) \arrow[l, "p^S", swap] \arrow[d, "\pi_i"]\\
\Hom(X_i(S), A_i(S)) \arrow[r, "k^S_i", swap] & \Hom(X_i(S), A_i) & \Hom(X_i, A_i) \arrow[l, "p^S_i"]
\end{tikzcd}    
\end{equation*}
Then we have
\begin{lemma}{\label{alphailem}}
\begin{equation}
    k^S(\alpha(S))=p^S(\alpha)
\end{equation}
and
\begin{equation}
    k^S_i(\alpha_i(S))=p^S_i(\alpha_i)
\end{equation}
for $i=1,2,3$.
\end{lemma}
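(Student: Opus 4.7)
The plan is to reduce the per-$i$ assertions to the single master equality $k^S(\alpha(S)) = p^S(\alpha)$, and then to establish the latter by comparing local invariants in the style of Tate--Nakayama.

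For the reduction, note that by the very definitions of $\alpha_i$ and $\alpha_i(S)$ we have $\alpha_i = \pi_i(\alpha)$ and $\alpha_i(S) = \pi_i(\alpha(S))$. The commutative diagram displayed just before the lemma yields $\pi_i \circ k^S = k^S_i \circ \pi_i$ and $\pi_i \circ p^S = p^S_i \circ \pi_i$. Applying $\pi_i$ to both sides of the master equation then gives
\[
k^S_i(\alpha_i(S)) = \pi_i(k^S(\alpha(S))) = \pi_i(p^S(\alpha)) = p^S_i(\alpha_i),
\]
so the per-$i$ statement for $i = 1,2,3$ is automatic once the master equation is known.

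For the master equation, I would invoke Tate's characterization of the canonical class: $\alpha \in H^2(\Gal(K/F), \Hom(X,A))$ is uniquely determined by the property that its localization at each place $v \in V_K$ agrees (up to the fixed normalization) with the local fundamental class in $H^2(\Gal(K_v/F_u), K_v^{\times})$, and likewise for $\alpha(S)$ with $v$ running over $S_K$. Both $k^S(\alpha(S))$ and $p^S(\alpha)$ live in $H^2(\Gal(K/F), \Hom(X(S), A))$, and their localizations at any $v \in S_K$ reduce to the same pushforward of the local fundamental class (for $p^S$ this uses that $X(S) \hookrightarrow X$ is dual to the quotient $D \twoheadrightarrow D_S$; for $k^S$ it uses compatibility of $A(S) \hookrightarrow A$ with the local inclusions). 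At places $v \notin S_K$ both classes vanish, since $X(S)$ is supported on $S_K$. Uniqueness then forces equality.

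The main obstacle is verifying that Tate's uniqueness characterization applies to the mixed coefficient module $\Hom(X(S), A)$, which is not one of Tate's textbook cases. This reduces to $H^1$-vanishing statements in the spirit of Lemma \ref{vanishlem}, and they are precisely what Kottwitz establishes in \cite[\S 6.1--6.2]{Kot9} from the three conditions imposed on $S$ in the excerpt (containing $S_\infty$, containing the ramified places, and representing every ideal class in each intermediate field of $K/F$ by an ideal supported on $S$). Once these vanishings are in hand, the local-invariant comparison is essentially bookkeeping and reproduces Kottwitz's original argument with $A$ in place of $A(S)$.
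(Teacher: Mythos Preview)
The paper does not supply an argument here at all: its proof of the lemma is simply the citation ``This is \cite[Lemma 14.6]{Kot9}.'' So there is no proof in the paper to compare against; you have gone further than the author by sketching an actual argument.

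Your reduction of the per-$i$ equalities to the master equation is correct and exactly how one should argue: the commutative diagram just above the lemma gives $\pi_i \circ k^S = k^S_i \circ \pi_i$ and $\pi_i \circ p^S = p^S_i \circ \pi_i$, and applying $\pi_i$ to the master equation yields the individual cases. Your outline for the master equation---push both classes into $H^2(\Gal(K/F), \Hom(X(S),A))$ and compare local invariants, relying on Tate's uniqueness characterization together with the $H^1$-vanishing results guaranteed by the three conditions on $S$---is indeed the shape of Kottwitz's argument in \cite{Kot9}.

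One point to tighten: your assertion that ``at places $v \notin S_K$ both classes vanish, since $X(S)$ is supported on $S_K$'' is loose. The component $X_1(S) = \Z$ is not supported on $S_K$, so the vanishing cannot come purely from the $X$-side; one has to argue via the coefficient module $A$ (for instance, that the relevant local contribution factors through a group with trivial cohomology at unramified places). This is handled in Kottwitz's setup, but your phrasing obscures where the work actually lies. Otherwise your sketch is faithful to the intended argument.
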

\begin{proof}
This is \cite[Lemma 14.6]{Kot9}.
\end{proof}
With the above notation and preliminaries, we now return to our connected reductive group $G$. We extend $G$ to a smooth affine group scheme $\mc{G}$ defined over $F_{S(\mc{G})}$, where $S(\mc{G})$ is a finite subset of $V_F$ containing all infinite places. We now define a subset $S \subset V_F$ to be \emph{adequate} if it satisfies all of the following conditions:
\begin{itemize}
    \item $S$ is finite
    \item $S$ contains $S(\mc{G})$
    \item $S$ contains all finite places that ramify in $K$
    \item For every intermediate field $E$ of $K/F$, every ideal class of $E$ contains an ideal in the support of $S_E$
    \item $S$ satisfies the condition of Lemma \ref{vanishlem}.
\end{itemize}
Such sets exist (for instance see \cite[Remark 14.4]{Kot9}) and if $S' \subset V_F$ is finite $S' \supset S$ for $S$ adequate, then $S'$ is also adequate. The first condition implies that $D_{2,S}$ is a torus and in addition, the third implies that $D_{2,S}$ extends uniquely to a torus $\mc{D}_{2,S}$ over $F_S$. Indeed, this last fact is the same as a factoring of the Galois action on $X^*(D_{2,S})$ through $\Gal(M/F)$ where $M$ is the maximal extension of $F$ unramified outside of $S$. Since $K \subset M$, and $D_{2,S}$ splits over $K$, such a factoring indeed exists.

 We will need the following lemma;
\begin{lemma}{\label{othervanishlem}}
Let $u \in V_F \setminus S$ and $v \in V_K$ lying over $u$. Then the following Tate cohomology groups satisfy
\begin{equation*}
    H^r(\Gal(K_v/F_u), \mc{D}_{2,S}(\mc{O}_v))=0
\end{equation*}
for all $r \in \Z$.
\end{lemma}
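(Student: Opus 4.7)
The plan is to reduce the vanishing of $H^r(\Gal(K_v/F_u), \mc{D}_{2,S}(\mc{O}_v))$ to the classical fact that the units of a finite unramified extension of local fields form a cohomologically trivial Galois module. Two key inputs drive the reduction: since $u \notin S$ and $S$ contains every place ramifying in $K$, the extension $K_v/F_u$ is unramified and $\mc{D}_{2,S}$ splits over $\mc{O}_v$; moreover, the character lattice $X^*(\mc{D}_{2,S}) = \Z[S_K]$ has a transparent $\Gamma$-module structure under $\Gamma := \Gal(K_v/F_u)$, identified with the decomposition subgroup $\Gal(K/F)_v$.

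First I would use the splitting of $\mc{D}_{2,S}$ over $\mc{O}_v$ to obtain a $\Gamma$-equivariant identification
\begin{equation*}
\mc{D}_{2,S}(\mc{O}_v) \cong \Hom_{\Z}(\Z[S_K], \mc{O}_v^{\times}),
\end{equation*}
with $\Gamma$ acting on $\Z[S_K]$ by permutation through its action on $S_K$ and on $\mc{O}_v^{\times}$ via the natural Galois action. Decomposing $S_K$ into $\Gamma$-orbits and letting $\Gamma_w \subset \Gamma$ denote the stabilizer of a chosen representative $w$, I would rewrite this as
\begin{equation*}
\mc{D}_{2,S}(\mc{O}_v) \cong \prod_{[w] \in \Gamma \backslash S_K} \Ind_{\Gamma_w}^{\Gamma}\, \mc{O}_v^{\times},
\end{equation*}
where I use that $\Gamma$ is finite so induction and coinduction coincide. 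Shapiro's lemma then yields
\begin{equation*}
H^r(\Gamma, \mc{D}_{2,S}(\mc{O}_v)) \cong \prod_{[w]} H^r(\Gamma_w, \mc{O}_v^{\times}).
\end{equation*}

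To finish, I would note that each $\Gamma_w$ is a subgroup of the finite cyclic group $\Gamma$, and its fixed field $K_v^{\Gamma_w}$ sits inside the unramified extension $K_v/F_u$, so $K_v/K_v^{\Gamma_w}$ is itself a finite unramified extension of local fields. Invoking the classical vanishing $H^r(\Gal(E/E'), \mc{O}_E^{\times}) = 0$ for every $r \in \Z$ whenever $E/E'$ is a finite unramified extension of local fields --- which follows by d\'evissage from the exact sequence $1 \to 1 + \mf{m}_E \to \mc{O}_E^{\times} \to k_E^{\times} \to 1$, together with Hilbert $90$ for the residue extension and the cohomological triviality of the higher unit groups (via Lang's theorem) --- each factor on the right-hand side vanishes, yielding the lemma. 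The only nontrivial ingredient is this classical cohomology of units in unramified extensions; the rest of the argument is pure bookkeeping via Shapiro, so I expect no real obstacle beyond keeping the $\Gamma$-equivariance straight in the orbit decomposition.
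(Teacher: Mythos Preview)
Your argument is correct. The paper does not give its own proof but simply cites \cite[Lemma~14.7]{Kot9}; the reduction you outline --- splitting $\mc{D}_{2,S}$ over $\mc{O}_v$, decomposing $\Z[S_K]$ into $\Gamma$-orbits, applying Shapiro's lemma, and invoking the vanishing of $H^r(\Gal(E/E'),\mc{O}_E^{\times})$ for finite unramified $E/E'$ --- is precisely the standard argument behind that reference.
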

\begin{proof}
\cite[Lemma 14.7]{Kot9}
\end{proof}
Now, for adequate $S$, we want to construct a set $H^1_{\alg}(\mc{E}_2(S), \mc{G}(\A_{K,S}))$ where $\mc{E}_2(S)$ is the extension
\begin{equation*}
    1 \to \mc{D}_{2,S}(\A_{K,S}) \to \mc{E}_2(S) \to \Gal(K/F) \to 1,
\end{equation*}
with corresponding cohomology class $\alpha_2(S)$. We define $H^1_{\alg}(\mc{E}_2(S), \mc{G}(\A_{K,S}))$ via the $H^1_Y(E,M)$ construction above. We let $M=\mc{G}(\A_{K,S})$, $E=\mc{E}_2(S)$, $Y=\Hom_{K_S}(\mc{D}_{2,S} , \mc{G})$. Let $\ov{Y}$ be the orbit of $Y$ inside $\prod\limits_{v \in V_K} \Hom_{K_v}(\mc{D}_{2,S}, \mc{G})$ under the action of $\mc{G}(\A_{K,S})$ and let $\xi: \ov{Y} \to \Hom(\mc{D}_{2,S}(\A_{K,S}) , \mc{G}(\A_{K,S}))$ be the natural map.

Next, we want to define a canonical map
\begin{equation}{\label{finitemap}}
    H^1_{\alg}(\mc{E}_2(S), \mc{G}(\A_{K,S})) \to H^1_{\alg}(\mc{E}_2(K/F), G(\A_K))
\end{equation}
as a composition
\begin{equation*}
\begin{tikzcd}
    H^1_{\alg}(\mc{E}_2(S), \mc{G}(\A_{K,S})) \arrow[r, "BC"] &  H^1_{\alg}(\mc{E}^K_2(S), G(\A_K)) \arrow[r, "p^*"]  & H^1_{\alg}(\mc{E}_2(K/F), G(\A_K)),
\end{tikzcd}    
\end{equation*}
where $\mc{E}^K_2(S)$ is the pushout of $\mc{E}_2(S)$ along the map $\mc{D}_{2,S}(\A_{K,S}) \to D_{2,S}(\A_K)$.  We first define $H^1_{\alg}(\mc{E}^K_2(S), G(\A_K))$ via the $H^1_Y(E,M)$ construction letting $E=\mc{E}^K_2(S)$, $M=G(\A_K)$, $Y= \Hom_K(D_{2,S}, G)$. We let $\ov{Y}$ be the $G(\A_K)$-orbit of $Y$ in $\prod\limits_{v \in V_K} \Hom_{K_v}(D_{2,S} , G)$ and $\xi: \ov{Y} \to \Hom( D_{2,S}(\A_K), G(\A_K))$ be the natural map. The map $BC$ is then defined via the $\Phi$ construction.

Now, by Lemma \ref{alphailem} we have $k^S_2(\alpha_2(S)) = p^S_2(\alpha_2)$. Since $k^S_2(\alpha_2(S))$ is a class in $H^2(\Gal(K/F), D_{2,S}(\A_K))$, this implies there is a map $\tilde{p^S_2}$ giving a map of extensions:
\begin{equation*}
\begin{tikzcd}
1 \arrow[r] & D_2(\A_K) \arrow[r] \arrow[d, "p^S_2"] & \mc{E}_2(K/F) \arrow[r] \arrow[d, "\tilde{p^S_2}"] & \Gal(K/F) \arrow[d, equal] \arrow[r] & 1 \\
1 \arrow[r] & D_{2,S}(\A_K) \arrow[r] & \mc{E}^K_2(S) \arrow[r] & \Gal(K/F) \arrow[r] & 1.
\end{tikzcd}    
\end{equation*}
Since by Lemma \ref{vanishlem} we have that $H^1(\Gal(K/F), D_{2,S}(\A_K))=0$, the induced map $\tilde{p^S_2}^*: H^1_{\alg}(\mc{E}^K_2(S), G(\A_K)) \to H^1_{\alg}(\mc{E}_2(K/F), G(\A_K))$ coming from the $\Psi$ construction does not depend on the choice of $\tilde{p^S_2}$. Hence, we call this map $p^*$. We need the following lemma:
\begin{lemma}{\label{finitemapfactor}}
For each $b \in H^1_{\alg}(\mc{E}_2(K/F), G(\A_K))$, there exists an adequate set $S$ so that $b$ lies in the image of the map in Equation \ref{finitemap}.
\end{lemma}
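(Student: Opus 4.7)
The plan is to take an arbitrary cocycle representative $(\nu, x)$ of $b$ and, after several enlargements of $S$, verify it lies in the image of a cocycle for $\mc{E}_2(S)$ with coefficients in $\mc{G}(\A_{K,S})$. First I would normalize $\nu$: by construction $\ov{Y_2}$ is the $G(\A_K) \rtimes \Gal(K/F)$-orbit of $Y_2 = \Hom_K(D_2, G)$, and $Y_2$ is itself $\Gal(K/F)$-stable, so $\nu$ already lies in the $G(\A_K)$-orbit of $Y_2$. Using the equivalence of Definition \ref{cocycledef} to conjugate by a suitable $m \in G(\A_K)$, we replace $(\nu, x)$ by an equivalent pair with $\nu \in Y_2$, i.e.\ with $\nu: D_2 \to G$ defined over $K$. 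Since $\nu$ factors through a maximal torus $T \subset G$, the induced map $X^*(T) \to X^*(D_2) = \Z[V_K]$ has finitely generated image contained in $\Z[S_{1,K}]$ for some finite $S_1 \subset V_F$; hence $\nu$ factors through $D_{2,S_1}$. Standard spreading-out of morphisms then furnishes an adequate $S \supset S_1 \cup S(\mc{G})$ for which $\nu$ extends to $\nu_S \in \Hom_{K_S}(\mc{D}_{2,S}, \mc{G})$.

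Next I would descend $x$ through $p^*$. The kernel of $\tilde{p^S_2}: \mc{E}_2(K/F) \to \mc{E}^K_2(S)$ is $N := \ker(D_2(\A_K) \to D_{2,S}(\A_K))$, which sits inside $D_2(\A_K)$. The restriction of $x$ to $D_2(\A_K)$ equals $\xi(\nu)$, and by the previous paragraph this factors through $D_{2,S}(\A_K)$, so $x|_N = 1$. The cocycle identity $x_{wn} = x_w w(x_n) = x_w$ for $n \in N$ then shows that $x$ descends to a well-defined cocycle $x''$ on $\mc{E}^K_2(S)$ valued in $G(\A_K)$, and pairing $x''$ with the image of $\nu$ under the induced map on $\ov{Y}$-sets produces $b'' \in H^1_{\alg}(\mc{E}^K_2(S), G(\A_K))$ with $p^*(b'') = b$.

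Finally I would lift $b''$ through $BC$. Fix a set-theoretic section $\{\tilde{\sigma} : \sigma \in \Gal(K/F)\}$ of $\mc{E}_2(S) \to \Gal(K/F)$; the finitely many values $x''(\tilde{\sigma}) \in G(\A_K)$ have components in $\mc{G}(\mc{O}_{K_v})$ for almost every $v$, so after one further enlargement of $S$ to an adequate set (still denoted $S$), we have $x''(\tilde{\sigma}) \in \mc{G}(\A_{K,S})$ for all $\sigma$. Separately, the restriction of $x''$ to $\mc{D}_{2,S}(\A_{K,S}) \subset \mc{E}_2(S)$ is $\xi(\nu_S)$, which takes values in $\mc{G}(\A_{K,S})$ by the integrality of $\nu_S$. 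Since every element of $\mc{E}_2(S)$ has the form $d\tilde{\sigma}$ with $d \in \mc{D}_{2,S}(\A_{K,S})$, the cocycle $x''|_{\mc{E}_2(S)}$ is valued in $\mc{G}(\A_{K,S})$ and defines $x' \in Z^1(\mc{E}_2(S), \mc{G}(\A_{K,S}))$. The pair $(\nu_S, x')$ then represents a class in $H^1_{\alg}(\mc{E}_2(S), \mc{G}(\A_{K,S}))$ whose image under (\ref{finitemap}) equals $b$.

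The main obstacle is the joint control required in the first step---simultaneously factoring $\nu$ through $\mc{D}_{2,S}$ and extending it to an integral morphism into $\mc{G}$---together with the bookkeeping needed to guarantee that the successive enlargements of $S$ in the later steps still satisfy the adequacy conditions (in particular the hypothesis of Lemma \ref{vanishlem} and the class-field-theoretic condition from \cite[\S 6.1]{Kot9}).
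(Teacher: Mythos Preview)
Your argument is correct and follows essentially the same route as the paper's proof: factor $\nu$ through $D_{2,S}$ using that the image of $X^*(T)$ in $\Z[V_K]$ is finitely generated, spread out to an integral map $\mc{D}_{2,S} \to \mc{G}$, then enlarge $S$ so the finitely many coset-representative values of the cocycle land in $\mc{G}(\A_{K,S})$. You supply more detail than the paper does on the descent through $p^*$ (checking $x|_N = 1$ and that $\tilde{p^S_2}$ is surjective with kernel $N$) and on the $BC$ step, which is fine.

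One small point: your opening normalization is unnecessary. By Definition~\ref{cocycledef} a class in $Z^1_{Y_2}(\mc{E}_2(K/F), G(\A_K))$ already has $\nu \in Y_2 = \Hom_K(D_2,G)$, not merely in $\ov{Y_2}$; the set $\ov{Y_2}$ only enters so that the $M$-action in the equivalence relation makes sense. So you can delete the conjugation-by-$m$ step. Your closing worry about adequacy under enlargement is also already handled in the text (any finite $S' \supset S$ with $S$ adequate is adequate), so the bookkeeping is not an obstacle.
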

\begin{proof}
Pick a cocycle $(\nu, x)$ representing $b$. We have $\nu: D_2 \to G$ is a map over $K$. Since $X^*(D_2)=\Z[V_K]$, we can find an adequate set $S$ such that $\nu$ factors to give a map $\nu': D_{2,S} \to G$. This implies that $(\nu, x)$ comes from an algebraic cocycle $(\nu', x')$ of $\mc{E}^K_2(S)$ for some adequate $S$. By enlarging $S$, we can assume that $\nu'$ comes from a map $\nu'': \mc{D}_{2,S} \to \mc{G}_{K_S}$. Since $\mc{D}_{2,S}(\A_{K,S})$ has finite index in $\mc{E}_2(S)$, we can enlarge $S$ so that the restriction $x''$ of $x'$ to $\mc{E}_2(S)$ has image in $\mc{G}(\A_{K,S})$. We then note that $(\nu'', x'') \in Z^1_{\alg}(\mc{E}_2(S), \mc{G}(\A_{K,S}))$ and  maps to $(\nu, x)$. This completes the lemma. 
\end{proof}
We now construct for every adequate set $S$ and place $u \notin S$, a localization map $l^S_u$ such that the following diagram commutes:
\begin{equation}{\label{keydiagram}}
\begin{tikzcd}
H^1_{\alg}(\mc{E}_2(S), \mc{G}(\A_{K,S}))  \arrow[r] \arrow[d, "l^S_u"] & H^1_{\alg}(\mc{E}_2(K/F), G(\A_K)) \arrow[d, "l^F_u"]\\
H^1(\Gal(K_v/F_u), \mc{G}(\mc{O}_v)) \arrow[r] & H^1_{\alg}(\mc{E}_{\iso}(K_v/F_u), G(K_v))
\end{tikzcd}
\end{equation}
In the above diagram top map is the canonical one we constructed previously and the bottom is the composition of the map on cohomology induced by $\mc{G}(\mc{O}_v) \hookrightarrow G(K_v)$ and the canonical inclusion $H^1(\Gal(K_v/F_u), G(K_v)) \hookrightarrow H^1_{\alg}(\mc{E}_{\iso}(K_v/F_u) , G(K_v))$. If we can construct such a diagram, we will have completed our construction of the total localization map. This is because the bottom left group in the above diagram is trivial by Lang's theorem and Hensel's lemma.

As in our construction of the localization map of Equation \eqref{localization}, we may restrict the entire top row of the diagram to be over the extension $K/E^v$ where $E^v$ is the fixed field of the decomposition group of $v$. Hence we can and do assume that $\Gal(K_v/F_u)=\Gal(K/F)$.

To construct the above commutative diagram, we will construct a larger diagram:
\begin{equation}{\label{bigdiagram}}
\begin{tikzcd}
H^1_{\alg}(\mc{E}_2(S), \mc{G}(\A_{K,S})) \arrow[d, "Loc"] \arrow[r, "BC"] & H^1_{\alg}(\mc{E}^K_2(S), G(\A_K)) \arrow[d, "Loc"] \arrow[r, "p^*"] & H^1_{\alg}(\mc{E}_2(K/F), G(\A_K)) \arrow[d, "Loc"]\\
H^1_{\alg}(\mc{E}^{\mc{O}_v}_2(S), \mc{G}(\mc{O}_v)) \arrow[r, "BC"] \arrow[d, "\mu^*_0"] & H^1_{\alg}(\mc{E}^{K_v}_2(S), G(K_v)) \arrow[r, "p^*"] \arrow[d, "\mu^*_0"] & H^1_{\alg}(\mc{E}^v_2(K/F), G(K_v)) \arrow[d, "\mu^*_v"]\\
H^1(\Gal(K_v/F_u), \mc{G}(\mc{O}_v)) \arrow[r] & H^1(\Gal(K_v/F_u), G(K_v)) \arrow[r] & H^1_{\alg}(\mc{E}_{\iso}(K_v/F_u), G(K_v)).
\end{tikzcd}
\end{equation}
The top two maps compose to give Equation \ref{finitemap}, the right vertical maps compose to give the localization map $l^F_u$, and the bottom maps compose to give the bottom map in Diagram \ref{keydiagram}. Hence, if we can construct all the relevant objects and maps and show that the diagram commutes, the above left arrows will compose to give $l^S_u$ as desired.

We need to define two of the above sets: $H^1_{\alg}(\mc{E}^{\mc{O}_v}_2(S), \mc{G}(\mc{O}_v))$ and  $H^1_{\alg}(\mc{E}^{K_v}_2(S), G(K_v))$. These are defined analogously to the way $\mc{E}^v_2(K/F)$ is defined relative to $
\mc{E}_2(K/F)$. In particular, we define $\mc{E}^{\mc{O}_v}_2(S)$ as the pushout of $\mc{E}_2(S)$ via the map $\mc{D}_{2,S}(\A_{K,S}) \to \mc{D}_{2,S}(\mc{O}_v)$ and $\mc{E}^{K_v}_2(S)$ as the pushout of $\mc{E}^K_2(S)$ via the map $D_{2,S}(\A_K)\to D_{2,S}(K_v)$. We then define $H^1_{\alg}(\mc{E}^{\mc{O}_v}_2(S), \mc{G}(\mc{O}_v))$ using the $H^1_Y(E,M)$ construction with $\ov{Y}=Y=\Hom_{\mc{O}_v}(\mc{D}_{2,S}, \mc{G})$. Similarly, we define $H^1_{\alg}(\mc{E}^{K_v}_2(S), G(K_v))$ such that $\ov{Y}=Y=\Hom_{K_v}(D_{2,S} , G)$.

We now turn to constructing the maps and showing they commute. To start, we have the following commutative diagram:
\begin{equation}{\label{torusdiagram}}
\begin{tikzcd}
\mc{D}_{2,S}(\A_{K,S}) \arrow[d] \arrow[r] & D_{2,S}(\A_K) \arrow[d] & D_2(\A_K) \arrow[l, "p"] \arrow[d]\\
 \mc{D}_{2,S}(\mc{O}_v) \arrow[r] & D_{2,S}(K_v)& D_2(K_v) \arrow[l, "p"] \\
 1 \arrow[u, "\mu_0"] \arrow[r] & 1 \arrow[u, "\mu_0"] & \Gm(K_v) \arrow[l, "q"] \arrow[u, "\mu_v"],
\end{tikzcd}    
\end{equation}
where the map $\mu_0$ and $q$ are trivial and $\mu_v$ was defined as part of the localization map. The unlabeled maps are induced from the following commutative diagram:
\begin{equation*}
\begin{tikzcd}
 \A_{K,S} \arrow[r] \arrow[d] & \A_K \arrow[d] & \A_K \arrow[d] \\
 \mc{O}_v \arrow[r] & K_v  & K_v
\end{tikzcd}
\end{equation*}
We remark that Diagram \ref{torusdiagram} is commutative since $p \mu_v$ is trivial because $u \notin S$. 
\begin{claim}
We now claim that for each group $A$ in Diagram \ref{torusdiagram}, the group $H^1(\Gal(K_v/F_u), A)$ vanishes. Indeed, these groups vanish by \cite[Lemma 6.5]{Kot9}, Lemma \ref{vanishlem}, Lemma \ref{othervanishlem}, and Hilbert's Theorem 90. As a result of this claim all, the sets in \ref{bigdiagram} are well-defined up to canonical isomorphism.
\end{claim}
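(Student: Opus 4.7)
The plan is to go through each of the nine entries in Diagram \ref{torusdiagram} and match it to one of the four cited results. Two of the nine entries are the trivial group, for which $H^1$ vanishes automatically, so seven genuine checks remain. I would organize these by column and row.

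For the rightmost column, $H^1(\Gal(K_v/F_u), \Gm(K_v)) = 0$ is Hilbert's Theorem 90. For $D_2(K_v)$ and $D_2(\A_K)$, since $D_2$ is a pro-torus split by $K$, \cite[Lemma 6.5]{Kot9} applies: decomposing $D_2$ into a projective limit of tori with character lattice $\Z[V_K]^{\mathrm{fin}}$ and using Shapiro's lemma together with Hilbert 90 reduces the local factors to $H^1(\Gal(K_v/F_u), K_v^\times) = 0$, and the adelic factor to a restricted product of similar local vanishing statements.

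For the middle column, $H^1(\Gal(K_v/F_u), D_{2,S}(K_v))$ vanishes by the second part of Lemma \ref{vanishlem} applied at the place $v$, and $H^1(\Gal(K_v/F_u), D_{2,S}(\A_K))$ vanishes by the first part of Lemma \ref{vanishlem} applied to the subgroup $G' = \Gal(K_v/F_u) \subset \Gal(K/F)$ (the hypothesis on $S$ in that lemma is built into the definition of \emph{adequate}). For the leftmost column, $H^1(\Gal(K_v/F_u), \mc{D}_{2,S}(\mc{O}_v))$ vanishes by Lemma \ref{othervanishlem} (which applies because $u \in V_F \setminus S$), and $H^1(\Gal(K_v/F_u), \mc{D}_{2,S}(\A_{K,S}))$ follows by running the same Shapiro-lemma decomposition used in Lemma \ref{vanishlem}: one writes $\mc{D}_{2,S}(\A_{K,S})$ as a restricted product over the orbits of $\Gal(K_v/F_u)$ on $S_K$, reducing the computation to a product of $H^1$'s of local integral points, which vanish at places outside $S$ by Lemma \ref{othervanishlem} and at places in $S$ by Hilbert 90 applied to $K_w^\times$.

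The only nontrivial bookkeeping is verifying that the restricted-product-over-orbits argument of \cite[Lemma 6.2]{Kot9}, which underlies Lemma \ref{vanishlem}, also yields vanishing for $\mc{D}_{2,S}(\A_{K,S})$ with its integrality condition at places outside $S$; once one sees that the local factors at such places are exactly $\mc{D}_{2,S}(\mc{O}_v)$, this reduces to Lemma \ref{othervanishlem} and there is nothing further to do. I expect this matching step — identifying which lemma covers which entry, and confirming the hypotheses (in particular that $u \notin S$ is used precisely where integral points appear) — to be the only real content; no further computation is required, and the claim about all sets in Diagram \ref{bigdiagram} being well-defined up to canonical isomorphism then follows because the remarks after Definition \ref{cocycledef} and the discussion of $\Phi$ and $\Psi$ make this conclusion automatic once $H^1$ of the relevant kernel vanishes.
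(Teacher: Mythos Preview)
Your proposal is correct and follows the same approach as the paper, which simply lists the four citations without matching them to individual entries. Your entry-by-entry assignment is accurate: Lemma~\ref{vanishlem} handles $D_{2,S}(\A_K)$ and $D_{2,S}(K_v)$, Lemma~\ref{othervanishlem} handles $\mc{D}_{2,S}(\mc{O}_v)$, Hilbert~90 handles $\Gm(K_v)$, and \cite[Lemma 6.5]{Kot9} covers the pro-torus entries $D_2(\A_K)$ and $D_2(K_v)$.

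One small imprecision: in your argument for $\mc{D}_{2,S}(\A_{K,S})$, after the Shapiro decomposition you need vanishing of $H^1$ at $\mc{O}_w^\times$ for $w\notin S_K$, which is the elementary unramified statement (Lang plus Hensel for $\Gm$) rather than Lemma~\ref{othervanishlem} itself (that lemma concerns $\mc{D}_{2,S}(\mc{O}_v)$, not $\mc{O}_w^\times$). The underlying mechanism is the same, so this does not affect correctness. Also note that the paper has already reduced to the case $\Gal(K_v/F_u)=\Gal(K/F)$ before this claim, so your invocation of Lemma~\ref{vanishlem} part~(1) for the full group is exactly what is needed, not merely for a proper subgroup.
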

\begin{claim}
We claim that the maps in \ref{torusdiagram} can be extended to homomorphisms of extensions as follows, and that each smaller square is essentially commutative in that it is commutative up to conjugation by $D_{2,S}(K_v)$.
\begin{equation*}{\label{extensiondiagram}}
\begin{tikzcd}
\mc{E}_2(S) \arrow[r] \arrow[d] & \mc{E}^K_2(S) \arrow[d] & \mc{E}_2(K/F) \arrow[l, "\tilde{p}"] \arrow[d] \\
\mc{E}^{\mc{O}_v}_2(S) \arrow[r] & \mc{E}^{K_v}_2(S) & \mc{E}^v_2(K/F) \arrow[l, "\tilde{p}"]  \\
 \Gal(K_v/F_u) \arrow[u, "\tilde{\mu_0}"] \arrow[r] & \Gal(K_v/F_u) \arrow[u, "\tilde{\mu_0}"] & \mc{E}_{\iso}(K_v/F_u) \arrow[l, "\tilde{q}"] \arrow[u, "\tilde{\mu_v}"]
\end{tikzcd}    
\end{equation*}
If we can construct such a diagram, the essential commutativity will follow from the previous claim. The diagram exists by the following claim.
\end{claim}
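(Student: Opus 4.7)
The plan is to separate the construction of each map from the verification of essential commutativity, since most of the extension morphisms are forced either by pushout universality or by cohomological vanishing, while all four small squares can then be dispatched by a single obstruction argument.

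First, I would observe that nearly every extension in the middle row is a pushout of one in the top row along the corresponding vertical torus map in Diagram \ref{torusdiagram}: by definition, $\mc{E}^{\mc{O}_v}_2(S)$ is the pushout of $\mc{E}_2(S)$ along $\mc{D}_{2,S}(\A_{K,S}) \to \mc{D}_{2,S}(\mc{O}_v)$, $\mc{E}^{K_v}_2(S)$ is the pushout of $\mc{E}^K_2(S)$ along $D_{2,S}(\A_K) \to D_{2,S}(K_v)$, and $\mc{E}^v_2(K/F)$ was defined earlier as the pushout of $\mc{E}_2(K/F)$ along $D_2(\A_K) \to D_2(K_v)$. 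The universal property of pushouts then supplies the three downward maps from the top to the middle row, and the two horizontal arrows of the middle row lift those of Diagram \ref{torusdiagram} because both available compositions realize $\mc{E}^{K_v}_2(S)$ as the pushout of $\mc{E}_2(S)$ along the same composite torus map. The two maps labeled $\tilde{p}$ were produced earlier in the paper from the identity $k^S_2(\alpha_2(S)) = p^S_2(\alpha_2)$ of Lemma \ref{alphailem}, the map $\tilde{\mu_v}$ is Kottwitz's construction from \cite[Remark 7.2]{Kot9}, and $\tilde{q}$ is simply the structure projection of the local gerb.

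The only genuinely new maps to build are the two splittings $\tilde{\mu_0}$. Under the reduction to $\Gal(K_v/F_u) = \Gal(K/F)$ carried out before the claim, these amount to splittings of the full extensions $\mc{E}^{\mc{O}_v}_2(S)$ and $\mc{E}^{K_v}_2(S)$, whose classes lie in $H^2(\Gal(K_v/F_u), \mc{D}_{2,S}(\mc{O}_v))$ and $H^2(\Gal(K_v/F_u), D_{2,S}(K_v))$ respectively. Both groups vanish by Lemma \ref{othervanishlem} and the second part of Lemma \ref{vanishlem}, so the required sections exist.

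To verify that each of the four small squares is commutative up to conjugation by $D_{2,S}(K_v)$, I would apply a uniform obstruction argument. In every square the two compositions $\tilde{f}_1, \tilde{f}_2 : \mc{F} \to \mc{F}'$ (where $\mc{F}'$ is always an extension whose abelian kernel maps to $D_{2,S}(K_v)$) induce the same map on kernels, because the corresponding square in Diagram \ref{torusdiagram} commutes, and induce the same map on the Galois quotient by construction. For the bottom-right square this compatibility at the torus level crucially uses the remark that $p \circ \mu_v$ is trivial since $u \notin S$. Consequently, $e \mapsto \tilde{f}_2(e) \tilde{f}_1(e)^{-1}$ descends to a $1$-cocycle valued in the kernel of $\mc{F}'$, which embeds in $D_{2,S}(K_v)$. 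By the preceding claim, $H^1(\Gal(K_v/F_u), D_{2,S}(K_v)) = 0$, so this cocycle is a coboundary, and this is precisely the assertion that $\tilde{f}_1$ and $\tilde{f}_2$ differ by conjugation by an element of $D_{2,S}(K_v)$. The only real obstacle is the bookkeeping needed to confirm that in each square the difference cocycle lands where claimed; once the torus-level compatibility is checked square by square, the vanishing from the preceding claim closes the argument uniformly.
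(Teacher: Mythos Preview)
Your approach is essentially the same as the paper's: construct the extension homomorphisms from compatibility of $H^2$-classes, then deduce essential commutativity from the $H^1$-vanishing of the previous claim. The paper organizes this by packaging the class-tracking into a separate claim (the one immediately following), but the content is identical.

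There is one genuine slip. To build the section $\tilde{\mu_0}: \Gal(K_v/F_u) \to \mc{E}^{K_v}_2(S)$ you assert that $H^2(\Gal(K_v/F_u), D_{2,S}(K_v)) = 0$ and cite the second part of Lemma~\ref{vanishlem}. That lemma only gives vanishing of $H^1$, and in fact $H^2$ need not vanish here: since $D_{2,S}(K_v) \cong \prod_{w \in S_K} K_v^\times$ with the Galois group acting by permuting the indices and acting on each $K_v^\times$, any orbit with trivial stabilizer still leaves an $H^2(\Gal(K_v/F_u), K_v^\times)$ summand from a fixed index, and this is the local Brauer group. What you actually need is not that the whole $H^2$ vanishes but that the particular class $[\mc{E}^{K_v}_2(S)]$ is trivial. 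This follows because that class is the pushforward of $[\mc{E}^{\mc{O}_v}_2(S)] \in H^2(\Gal(K_v/F_u), \mc{D}_{2,S}(\mc{O}_v))$, and the latter group vanishes in all degrees by Lemma~\ref{othervanishlem}. With this correction your construction of both $\tilde{\mu_0}$'s goes through.

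A smaller point: only the top $\tilde{p}$ was ``produced earlier in the paper''; the middle $\tilde{p}: \mc{E}^v_2(K/F) \to \mc{E}^{K_v}_2(S)$ still needs to be built. It exists by the same mechanism once you know the pushforward classes agree in $H^2(\Gal(K_v/F_u), D_{2,S}(K_v))$, which is exactly the content of the following claim in the paper (right edge via \cite[(7.7)]{Kot9} together with the top edge via Lemma~\ref{alphailem}). Your essential-commutativity argument via the difference $1$-cocycle is correct and is precisely what the paper means by ``follows from the previous claim.''
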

\begin{claim}
We claim that for each $A$ in Diagram \ref{torusdiagram}, there is a unique element $\alpha_A \in H^2(\Gal(K_v/F_u), A)$ such that 
\begin{itemize}
    \item $\alpha_{\mc{D}_{2,S}(\A_{K,S})}=\alpha_2(S)$,
    \item  $\alpha_{D_2(\A_K)}=\alpha_2$,
    \item $\alpha_1=1$,
    \item $\alpha_{\Gm(K_v)}=\alpha(K_v/F_u)$ (the local fundamental class),
    \item Each arrow $A \to A'$ maps $\alpha_A$ to $\alpha_{A'}$,
    \item Each $\alpha_A$ gives the cohomology class corresponding to the relevant extension in Diagram \ref{extensiondiagram}.
\end{itemize}
To verify the claim, the commutativity of Diagram \ref{torusdiagram} implies that we need only check that along each outer edge, the maps $A' \to A \leftarrow A''$ map the canonical elements $\alpha_{A'}, \alpha_{A''}$ to the same $\alpha_A \in H^2(\Gal(K_v/F_u), A)$. For the top of the diagram, this follows from Lemma \ref{alphailem}, for the left this follows from Lemma \ref{othervanishlem}, for the bottom this is trivial, and for the right this is \cite[Equation (7.7)]{Kot9}. That these cohomology classes correspond to the various extensions is clear from their definitions. This implies the requisite maps of extensions in the previous claim do indeed exist.
\end{claim}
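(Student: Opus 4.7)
The plan is to define $\alpha_A$ at the four ``prescribed'' vertices of Diagram \ref{torusdiagram} using the identities listed in the claim, propagate these classes to every other vertex by functoriality of $H^2(\Gal(K_v/F_u), -)$ along the nine arrows, and then verify that the resulting assignment is well-defined — that is, whenever two paths in Diagram \ref{torusdiagram} end at the same vertex, they produce the same class in $H^2$. Once the $\alpha_A$ exist, the correspondence with extensions in Diagram \ref{extensiondiagram} is automatic from the construction, since each $\mc{E}^{\bullet}$ was defined as a pushout of one of the prescribed extensions along precisely one of the arrows of Diagram \ref{torusdiagram}.

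First I would fix $\alpha_{\mc{D}_{2,S}(\A_{K,S})}=\alpha_2(S)$, $\alpha_{D_2(\A_K)}=\alpha_2$, $\alpha_{\Gm(K_v)}=\alpha(K_v/F_u)$, and the two copies of $\alpha_1$ equal to the trivial class (which is consistent since $H^2(\Gal(K_v/F_u), 1)=1$). For the remaining four vertices, I would define $\alpha_A$ by pushing forward along any chosen edge terminating in $A$. The three interior vertices of Diagram \ref{torusdiagram} that lie on two incoming arrows are $D_{2,S}(\A_K)$ (top middle), $\mc{D}_{2,S}(\mc{O}_v)$ (middle left), $D_{2,S}(K_v)$ (true middle), and $D_2(K_v)$ (middle right); at each such vertex there are two distinct paths from the prescribed corners, and well-definedness amounts to commutativity of cohomology classes around the outer circuit.

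The main substance is then to verify the four ``outer edge'' compatibilities, and each is supplied by a result already in place. Along the top edge, the two paths from $\mc{D}_{2,S}(\A_{K,S})$ and $D_2(\A_K)$ into $D_{2,S}(\A_K)$ agree because $k^S_2(\alpha_2(S))=p^S_2(\alpha_2)$ by Lemma \ref{alphailem}. Along the left edge the two paths pass through $H^2(\Gal(K_v/F_u), \mc{D}_{2,S}(\mc{O}_v))$, and consistency is forced by the vanishing in Lemma \ref{othervanishlem}, which makes the relevant $H^2$-group trivial so the two classes coincide a fortiori. Along the bottom edge everything lies in $H^2$ of trivial groups so the assertion is vacuous. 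Along the right edge, the compatibility between $\alpha_2$ (pushed down to $D_2(K_v)$) and $\alpha(K_v/F_u)$ (pushed up via $\mu_v$) is precisely the content of \cite[Equation (7.7)]{Kot9}. This pins down the common value at each of the four doubly-defined vertices.

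The potential obstacle is that $\tilde{p^S_2}$, $\tilde{\mu_v}$, etc.\ are only canonical up to $A$-conjugation, so one must take care that the propagation of $\alpha_A$ along an arrow $A\to A'$ is genuinely canonical in $H^2$ rather than merely on the level of cocycles; however, since we operate entirely in $H^2(\Gal(K_v/F_u),-)$ (not at the level of extensions), this ambiguity disappears and pushforward of cohomology classes is canonical. Once the $\alpha_A$ are defined, the existence of the lifts $\tilde{\mu_0}, \tilde{\mu_v}, \tilde{p}, \tilde{q}$ and of the other morphisms in Diagram \ref{extensiondiagram} follows because two extensions of $\Gal(K_v/F_u)$ by a common group admit a morphism covering the identity on the quotient exactly when their classes in $H^2$ agree (or map to each other under the prescribed coefficient map), which is what we have just arranged. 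Uniqueness of $\alpha_A$ follows from the same compatibilities together with the vanishing of $H^1$ used throughout Section \ref{BGloc}.
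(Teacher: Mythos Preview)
Your proposal is correct and follows essentially the same approach as the paper: fix the classes at the corner vertices, propagate along the arrows, and verify consistency along the four outer edges using exactly the same four inputs (Lemma~\ref{alphailem} for the top, Lemma~\ref{othervanishlem} for the left, triviality for the bottom, and \cite[Equation (7.7)]{Kot9} for the right). Two minor remarks: you write ``three interior vertices'' but then list four, and the uniqueness of the $\alpha_A$ does not require any $H^1$-vanishing---it is immediate once the outer-edge compatibilities pin down the four non-corner vertices; the $H^1$-vanishing is only used to make the induced maps of \emph{extensions} canonical up to conjugacy.
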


We are now in a position to define the maps in Diagram \ref{bigdiagram}. We first give a diagram relating the various sets $\ov{Y}$. We use the notation $G \cdot A$ to denote the $G$-orbit of $A$ inside a natural product space.
\begin{equation}{\label{ovYeqn}}
\begin{tikzcd}
\mc{G}(\A_{K,S}) \cdot \Hom_{K_S}(\mc{D}_{2,S} , 
\mc{G}) \arrow[d] \arrow[r] & G(\A_K) \cdot \Hom_K(D_{2,S}, G) \arrow[d], \arrow[r, "p"] & G(\A_K) \cdot \Hom_K(D_2, G) \arrow[d] \\
 \Hom_{\mc{O}_v}(\mc{D}_{2,S}, \mc{G}) \arrow[d, "\mu_0"] \arrow[r] & \Hom_{K_v}(D_{2,S} , G) \arrow[d, "\mu_0"] \arrow[r, "p"] & \Hom_{K_v}( D_2, G) \arrow[d, "\mu_v"] \\
 1 \arrow[r]  & 1 \arrow[r, "q"] & \Hom_{K_v}(\Gm , G) 
\end{tikzcd}    
\end{equation}
In the above diagram, the horizontal maps have all been defined or are clear from inspection. The $\mu_0$ maps are trivial and $\mu_v$ was defined when we defined the localization map. An element $\nu \in \mc{G}(\A_{K,S}) \cdot \Hom_{K_S}(\mc{D}_{2,S} ,
\mc{G})$ consists of a sequence of maps $\nu_v$ for each place $v$ of $K$ such that the if $v \notin S_K$, then $\nu_v$ is defined over $\mc{O}_v$. In particular, there is a natural projection to $\Hom_{\mc{O}_v}(\mc{D}_{2,S}, \mc{G})$ given by $\nu \mapsto \nu_v$. The other vertical maps are defined analogously. It is also clear that each map $x: \ov{Y} \to  \ov{Y'}$ satisfies $x(Y) \subset Y'$.

We also have a diagram of $\Gal(K_v/F_u)$-groups:
\begin{equation}{\label{Mdiagram}}
\begin{tikzcd}
\mc{G}(\A_{K,S}) \arrow[r] \arrow[d] & G(\A_K) \arrow[d] \arrow[r, equal] & G(\A_K) \arrow[d] \\
\mc{G}(\mc{O}_v) \arrow[d, equal] \arrow[r] & G(K_v) \arrow[d, equal] \arrow[r, equal] & G(K_v) \arrow[d, equal] \\
 \mc{G}(\mc{O}_v) \arrow[r] & G(K_v) \arrow[r, equal] & G(K_v).
\end{tikzcd}    
\end{equation}
We can now define all the maps in Diagram \ref{bigdiagram} via Diagrams \ref{extensiondiagram}, \ref{ovYeqn}, \ref{Mdiagram}. In particular, all are examples of the $\Phi$ and $\Psi$ constructions. We then observe that all the squares commute by Lemma \ref{Philem}, Lemma \ref{Psilem}, and Lemma \ref{PhiPsilem}.

This finishes the proof and hence establishes the existence of the total localization map.





\subsection{Basic Subsets}
We now define, for $K/F$ a finite extension of number fields, a set $H^1_{\bas}(\mc{E}_2(K/F), G(\A_K))$. In particular, we define $H^1_{\bas}(\mc{E}_2(K/F), G(\A_K)) \subset H^1_{\alg}(\mc{E}_2(K/F), G(\A_K))$ to be the set of classes represented by algebraic cocycles $(\nu, x)$ such that $\nu: D_2 \to G$ factors through the center $Z_G$ of $G$. This set is given via the $H^1_Y(E,M)$ construction with $Y_{\bas}=\ov{Y_{\bas}} := \Hom_{K}(D_2,Z_G)$.

We now define a set $H^1_{\bas}(\mc{E}_1(K/F), G(\A_K)/Z_G(K) )$. We let $D_1=\Gm_F$ and define $Y_1$ to be the subset of $\Hom_K(D_1 , G)$ factoring through $Z_G$. We let $\ov{Y_1}=Y_1$ and define $E=\mc{E}_1(K/F)$ and $M=G(\A_K)/Z_G(K))$. We define $\xi_1: Y_1 \to \Hom_K(D_1(\A_K)/D_1(K) , G(\A_K)/Z_G(K))$ to be the natural map.

\begin{remark}
It would be tempting to try to define a set $H^1_{\alg}(\mc{E}_1(K/F), G(\A_K)/Z_G(K))$ where we do not require the elements of $Y_1$ to be central. However, the map $\xi_1$ does not make sense in this case.
\end{remark}
\subsection{Some Maps of Global Cohomology}{\label{bgmaps}}
We now claim there are canonical maps
\begin{equation}{\label{globalmaps}}
    H^1_{\bas}(\mc{E}_3(K/F), G(K)) \to H^1_{\bas}(\mc{E}_2(K/F), G(\A_K)) \to H^1_{\bas}(\mc{E}_1(K/F), G(\A_K)/Z_G(K) ).
\end{equation}
We will spend the rest of this subsection constructing these maps.

We begin by constructing the map $H^1_{\bas}(\mc{E}_2(K/F), G(\A_K)) \to H^1_{\bas}(\mc{E}_1(K/F), G(\A_K)/Z_G(K) )$. From \cite[\S 6.3]{Kot9}, we have an extension:
\begin{equation*}
    1 \to D_2(\A_K)/D_2(K) \to \mc{F} \to \Gal(K/F) \to 1,
\end{equation*}
and maps of extensions
\begin{equation*}
\begin{tikzcd}
1 \arrow[r] & D_2(\A_K) \arrow[d, "a", swap] \arrow[r] & \mc{E}_2(K/F) \arrow[r] \arrow[d, "\tilde{a}", swap] & \Gal(K/F) \arrow[r] \arrow[d, equal] & 1 \\
1 \arrow[r] & D_2(\A_K)/D_2(K) \arrow[r] & \mc{F} \arrow[r] & \Gal(K/F) \arrow[r] & 1 \\
1 \arrow[r] & D_1(\A_K)/D_1(K) \arrow[u, "b"] \arrow[r] & \mc{E}_1(K/F) \arrow[r] \arrow[u, "\tilde{b}"] & \Gal(K/F) \arrow[u, equal] \arrow[r] & 1
\end{tikzcd}
\end{equation*}
where $a$ is the natural projection and $b$ is induced by the map of characters $\Sigma: \Z[V_K] \to \Z$ where $\Sigma(v)=1$ for each $v \in V_K$. This extension satisfies $a(\alpha_1)=b(\alpha_2) \in H^2(\Gal(K/F), D_2(\A_K) / D_2(K))$.

\begin{lemma}
We have $H^1(\Gal(K/F), D_2(\A_K)/D_2(K))=0$ and hence $\tilde{b}$ and $\tilde{a}$ are unique up to conjugacy by $D_2(\A_K) / D_2(K)$.
\end{lemma}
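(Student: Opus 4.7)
The plan is to reduce the vanishing to global class field theory via a Shapiro-type decomposition, following the template of Lemma~\ref{vanishlem} in which the analogous statement for $D_2(\A_K)$ was established. Since $D_2$ is the $F$-pro-torus with character group $X_2 = \Z[V_K]$ and $\Gal(K/F)$ acts by its natural permutation action on $V_K$, one has canonical $\Gal(K/F)$-equivariant identifications
$$D_2(\A_K) \;=\; \prod_{v \in V_K} \A_K^{\times}, \qquad D_2(K) \;=\; \prod_{v \in V_K} K^{\times},$$
and hence
$$D_2(\A_K)/D_2(K) \;\cong\; \prod_{v \in V_K} C_K,$$
where $C_K := \A_K^{\times}/K^{\times}$ is the idele class group and $\sigma \in \Gal(K/F)$ acts by $(c_v)_v \mapsto (\sigma c_{\sigma^{-1} v})_v$.

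Next, I would partition $V_K$ into $\Gal(K/F)$-orbits, one for each $u \in V_F$; picking a lift $v \mid u$, the stabilizer of $v$ is the decomposition group $\Gal(K_v/F_u)$. This gives a decomposition as $\Gal(K/F)$-modules
$$\prod_{v \in V_K} C_K \;\cong\; \prod_{u \in V_F} \Ind^{\Gal(K/F)}_{\Gal(K_v/F_u)} C_K,$$
and Shapiro's lemma, combined with the fact that cohomology commutes with products, then yields
$$H^1\bigl(\Gal(K/F),\, D_2(\A_K)/D_2(K)\bigr) \;\cong\; \prod_{u \in V_F} H^1\bigl(\Gal(K_v/F_u),\, C_K\bigr).$$
Each factor vanishes by global class field theory, specifically the standard vanishing $H^1(\Gal(K/E^v), C_K) = 0$ where $E^v \subset K$ is the fixed field of $\Gal(K_v/F_u)$.

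For the second assertion, any two lifts $\tilde{a}_1, \tilde{a}_2$ (resp.\ $\tilde{b}_1, \tilde{b}_2$) fitting into the displayed diagram of extensions differ by the assignment $\sigma \mapsto \tilde{a}_1(\sigma)\tilde{a}_2(\sigma)^{-1}$, valued in the kernel $D_2(\A_K)/D_2(K)$; a direct check shows this is a $1$-cocycle, and the vanishing just established realizes it as a coboundary, producing the element of $D_2(\A_K)/D_2(K)$ that conjugates one lift into the other. The only place requiring care is the Shapiro identification when $V_K$ is infinite, where one is dealing with a genuine infinite product of induced modules rather than a single one; this causes no real difficulty since group cohomology of discrete modules commutes with arbitrary direct products, but it is the step I would write out most carefully.
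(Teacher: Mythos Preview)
Your proof is correct and follows essentially the same route as the paper's: the paper cites \cite[Lemma A.6]{Kot9} to obtain the decomposition $H^1(\Gal(K/F), D_2(\A_K)/D_2(K)) \cong \prod_{[v]} H^1(\Gal(K/F)_v, C_K)$ that you derive by hand via Shapiro, and then reduces each factor to the standard class-field-theoretic vanishing $H^1(\Gal(K/E), C_K)=0$ (phrased there via inflation and global Tate--Nakayama duality). The second assertion is handled identically.
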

\begin{proof}
By \cite[Lemma A.6]{Kot9}, we have 
\begin{equation*}
    H^1(\Gal(K/F) , D_2(\A_K) / D_2(K) ) = \prod\limits_{v \in G \setminus V_K} H^1(G_v, \A^{\times}_K / K^{\times}).
\end{equation*}
In particular, it suffices to show that for each $G' \subset\ \Gal(K/F)$, we have $H^1(G', \A^{\times}_K/ K^{\times})=0$. Since $G'=\Gal(K/E)$ where $E \subset K$ is the fixed field of $G'$, this is a standard fact from global class field theory. 
\end{proof}
We construct the map in Equation \ref{globalmaps} using the following series of morphisms:
\begin{equation*}
\begin{tikzcd}
    H^1_{\bas}(\mc{E}_2(K/F), G(\A_K) ) \arrow[d] \\  
    H^1_{Y_{\bas}}(\mc{E}_2(K/F), G(\A_K)/Z_G(K)) \\
    H^1_{Y_{\bas}}(\mc{F} , G(\A_K)/Z_G(K)) \arrow[u, "\tilde{a}^*"] \arrow[d, "\tilde{b}^*", swap]\\
    H^1_{\bas}(\mc{E}_1(K/F), G(\A_K) / Z_G(K))
\end{tikzcd}
\end{equation*}
The first map is induced via functoriality from $G(\A_K) \to G(\A_K)/Z_G(K)$ and the maps $\tilde{a}^*, \tilde{b}^*$ are defined via pullback. We note that $\tilde{a}^*$ and $\tilde{b}^*$ only depend on $a$ and $b$ respectively by the previous lemma.

To complete the construction, we need to show that the map $\tilde{a}^*: H^1_{Y_{\bas}}(\mc{F} , G(\A_K)/Z_G(K)) \to H^1_{Y_{\bas}}(\mc{E}_2(K/F), G(\A_K)/Z_G(K))$ is an isomorphism. The map $D_2(\A_K) \to D_2(\A_K)/ D_2(K)$ is a surjection and hence it follows that the map $\tilde{a}: \mc{E}_2(K/F) \to \mc{F}$ is also surjective. Hence, $\tilde{a}^*$ is injective. To prove surjectivity of $\tilde{a}^*$, we take a cocycle of $(\nu, x)$ of $\mc{E}_2(K/F)$ valued in $G(\A_K)/Z_G(K)$. Then for $d \in D_2(K)$, we have $x_d = \nu(d) \mod Z_G(K) = 0$. In particular, $x$ factors through $\mc{F}$.

We now construct the map $H^1_{\bas}(\mc{E}_3(K/F), G(K)) \to H^1_{\bas}(\mc{E}_2(K/F), G(\A_K))$. Again, from \cite[\S 6.3]{Kot9}, we have an extension:
\begin{equation*}
    1 \to D_3(\A_K) \to \mc{F'} \to \Gal(K/F) \to 1,
\end{equation*}
and maps of extensions
\begin{equation*}
\begin{tikzcd}
1 \arrow[r] & D_3(K) \arrow[d, "a'", swap] \arrow[r] & \mc{E}_3(K/F) \arrow[r] \arrow[d, "\tilde{a'}", swap] & \Gal(K/F) \arrow[r] \arrow[d, equal] & 1 \\
1 \arrow[r] & D_3(\A_K) \arrow[r] & \mc{F'} \arrow[r] & \Gal(K/F) \arrow[r] & 1 \\
1 \arrow[r] & D_2(\A_K) \arrow[u, "b'"] \arrow[r] & \mc{E}_2(K/F) \arrow[r] \arrow[u, "\tilde{b'}"] & \Gal(K/F) \arrow[u, equal] \arrow[r] & 1,
\end{tikzcd}
\end{equation*}
where $a'(\alpha_3)=b'(\alpha_2)$.
\begin{lemma}
We have $H^1(\Gal(K/F), D_3(\A_K))=0$ and hence $\tilde{b'}$ and $\tilde{a'}$ are unique up to conjugacy by $D_3(\A_K)$.
\end{lemma}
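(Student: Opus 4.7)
The plan is to establish the vanishing $H^1(\Gal(K/F), D_3(\A_K)) = 0$; the uniqueness of $\tilde{a'}, \tilde{b'}$ up to $D_3(\A_K)$-conjugacy is then automatic by the same general nonsense on lifts of maps of abelian kernels used in the preceding lemma, since the ambiguity in such a lift is parameterized by this $H^1$.

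To establish the vanishing, I would dualize the short exact sequence of $\Gal(K/F)$-modules
\[
0 \to \Z[V_K]_0 \to \Z[V_K] \to \Z \to 0
\]
to obtain a short exact sequence of $F$-(pro-)tori $1 \to \Gm \to D_2 \to D_3 \to 1$, in which the inclusion $\Gm \hookrightarrow D_2 = \prod_v \Gm$ is the diagonal. Taking $\A_K$-points and applying Hilbert 90 at each completion gives the short exact sequence of $\Gal(K/F)$-modules
\[
1 \to \A_K^{\times} \to D_2(\A_K) \to D_3(\A_K) \to 1.
\]
The associated long exact sequence supplies the segment
\[
H^1(G, D_2(\A_K)) \to H^1(G, D_3(\A_K)) \to H^2(G, \A_K^{\times}) \to H^2(G, D_2(\A_K)),
\]
where $G := \Gal(K/F)$. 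It suffices to show the leftmost term vanishes and the rightmost map is injective.

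Both reduce to standard local inputs via the Shapiro identification $D_2(\A_K) = \bigoplus_{u \in V_F} \mathrm{Ind}_{G_v}^{G}\A_K^{\times}$ (with $v \mid u$ chosen), so that $H^i(G, D_2(\A_K)) = \prod_u H^i(G_v, \A_K^{\times})$. A second Shapiro reduction for the $G_v$-orbit decomposition of $V_K$ expresses each $H^1(G_v, \A_K^{\times})$ as a product of terms $H^1(G_v \cap G_w, K_w^{\times})$, all of which vanish by Hilbert 90; this handles the leftmost term. For the rightmost injectivity, the key observation is that the diagonal $\A_K^{\times} \hookrightarrow \prod_v \A_K^{\times}$ becomes $G_v$-equivariantly split by projection to the $v$-th factor (since $G_v$ fixes $v$). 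Tracing through the Shapiro identifications, the composition
\[
H^2(G, \A_K^{\times}) \to H^2(G, D_2(\A_K)) \to H^2(G_v, \A_K^{\times}) \to H^2(G_v, K_v^{\times})
\]
is exactly the local Brauer invariant $\mathrm{inv}_u$. Since a class in $H^2(G, \A_K^{\times}) = \bigoplus_u H^2(G_v, K_v^{\times})$ is determined by its local invariants (Hasse), vanishing of the image in $H^2(G, D_2(\A_K))$ forces vanishing of the class.

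The main obstacle is bookkeeping of the iterated Shapiro decompositions and the corresponding Mackey identifications; the underlying cohomological inputs (Hilbert 90 and the local-global structure of the Brauer group) are entirely standard. Once the vanishing $H^1(G, D_3(\A_K)) = 0$ is established, the uniqueness of $\tilde{a'}$ and $\tilde{b'}$ follows as in the analogous step of the preceding lemma.
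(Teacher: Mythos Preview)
You have misidentified the torus $D_3$. In this paper's conventions (Key Example~\ref{keyexample}), the pro-tori $D_1,D_2,D_3$ have character groups $X_1=\Z[V_K]_0$, $X_2=\Z[V_K]$, $X_3=\Z$ respectively. Thus $D_3=\Gm$. Dualizing the sequence $0\to\Z[V_K]_0\to\Z[V_K]\to\Z\to 0$ of character groups reverses the arrows, so the resulting sequence of tori is
\[
1 \to D_3 \to D_2 \to D_1 \to 1,
\]
not $1\to\Gm\to D_2\to D_3\to 1$ as you wrote. The quotient torus in your sequence is $D_1$, not $D_3$; your long-exact-sequence argument therefore establishes the vanishing of $H^1(\Gal(K/F),D_1(\A_K))$, which is a different (and here irrelevant) statement.

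Since $D_3=\Gm$, the lemma is simply asserting $H^1(\Gal(K/F),\A_K^{\times})=0$. The paper's proof is the one-line observation
\[
H^1(\Gal(K/F),\A_K^{\times})=\prod_{u\in V_F}H^1(\Gal(K_v/F_u),K_v^{\times}),
\]
together with Hilbert~90 at each place. No exact sequence of tori, no injectivity argument in degree~2, and no Brauer-group input is needed.
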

\begin{proof}
We have 
\begin{equation*}
    H^1(\Gal(K/F) , D_3(\A_K) ) = \bigoplus\limits_{u \in V_F} H^1(\Gal(K_v/F_u), D_3(K_v)).
\end{equation*}
The groups on the right vanish by \cite[Lemma 7.1.(1)]{Kot9}.
\end{proof}
Let $Y'_{\bas}$ be the set of homomorphisms $\Hom_K(D_3 , Z_G)$. Then we construct the map in Equation \ref{globalmaps} using the following series of morphisms:
\begin{equation*}
\begin{tikzcd}
    H^1_{\bas}(\mc{E}_3(K/F), G(K) ) \arrow[d] \\  
    H^1_{Y'_{\bas}}(\mc{E}_3(K/F), G(\A_K)) \\
    H^1_{Y'_{\bas}}(\mc{F'} , G(\A_K)) \arrow[u, "\tilde{a'}^*"] \arrow[d, "\tilde{b'}^*", swap]\\
    H^1_{\bas}(\mc{E}_2(K/F), G(\A_K))
\end{tikzcd}
\end{equation*}
The first map is induced via functoriality from $G(K) \to G(\A_K)$ and the maps $\tilde{a'}^*, \tilde{b'}^*$ are defined via pullback. We note that $\tilde{a'}^*$ and $\tilde{b'}^*$ only depend on $a'$ and $b'$ respectively by the previous lemma.

To complete the construction, we need to show that the map $\tilde{a'}^*: H^1_{Y'_{\bas}}(\mc{F'} , G(\A_K)) \to H^1_{Y'_{\bas}}(\mc{E}_3(K/F), G(\A_K))$ is an isomorphism. On the one hand the map is surjective since if we have a cocycle $(\nu, x)$ of $\mc{E}_3(K/F)$, we can push it forward to $\mc{F'}$ to get a cocycle $(\nu, x')$ that pulls back to $(\nu, x)$. On the other hand, if $(\nu_1, x_1)$ and $(\nu_2, x_2)$ are algebraic cocycles of $\mc{F'}$ that pull back to cocycles that are equivalent via $m \in G(\A_K)$, then this implies that $\nu_1 = \nu_2$ and that for all $w \in \mc{F'}$, $m^{-1} x_1(w) w(m)=x_2$ since this is true on $D_3(\A_K)$ and the image of $\mc{E}_3$ and these sets generate $\mc{F'}$.

We now show that the map
\begin{equation*}
    H^1_{\bas}(\mc{E}_3(K/F), G(K)) \to H^1_{\bas}(\mc{E}_2(K/F), G(\A_K))
\end{equation*}
commutes with localization. In \cite[\S 7]{Kot9}, Kottwitz defines  for a place $v$ of $K$, a localization map 
\begin{equation*}
  H^1_{\alg}(\mc{E}_3(K/F), G(K)) \to H^1_{\alg}(\mc{E}_{\iso}(K_v/F_u), G(K_v))
\end{equation*}
that is entirely analogous to the localization map for $H^1(\mc{E}_2(K/F), G(\A_K))$. Then we have
\begin{lemma}{\label{loccompatlem}}
The following diagram commutes
\begin{equation*}
\begin{tikzcd}
H^1_{\bas}(\mc{E}_3(K/F), G(K)) \arrow[r] \arrow[d] & H^1_{\bas}(\mc{E}_2(K/F), G(\A_K)) \arrow[d] \\
H^1_{\bas}(\mc{E}_{\iso}(K_v/F_u), G(K_v)) \arrow[r, equals] & H^1_{\bas}(\mc{E}_{\iso}(K_v/F_u), G(K_v)),
\end{tikzcd}  
\end{equation*}
where the vertical maps are the respective localization maps and the upper horizontal map is the one constructed in this section.
\end{lemma}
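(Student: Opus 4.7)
The plan is to reduce commutativity to the functoriality of the $\Phi$ and $\Psi$ constructions by decomposing each arrow in the square into its constituent elementary operations, inserting an intermediate localization of $\mc{F'}$ to bridge the two columns, and then chasing a three-by-three ladder of extensions.

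First, I would unpack the upper horizontal map as the composite of (i) the $\Phi$-map $H^1_{\bas}(\mc{E}_3, G(K)) \to H^1_{Y'_{\bas}}(\mc{E}_3, G(\A_K))$ induced by $G(K) \hookrightarrow G(\A_K)$, (ii) the inverse of the $\Psi$-isomorphism $\tilde{a'}^*\colon H^1_{Y'_{\bas}}(\mc{F'}, G(\A_K)) \xrightarrow{\sim} H^1_{Y'_{\bas}}(\mc{E}_3, G(\A_K))$, and (iii) the $\Psi$-pullback $\tilde{b'}^*\colon H^1_{Y'_{\bas}}(\mc{F'}, G(\A_K)) \to H^1_{\bas}(\mc{E}_2, G(\A_K))$. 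Each vertical localization map in turn factors as (a) a change-of-$G$ restriction to the decomposition group $\Gal(K/E^v)=\Gal(K_v/F_u)$, (b) a $\Phi$ pushing the kernel along $D_\star(\A_K) \to D_\star(K_v)$ (respectively $D_3(K)\to K_v^\times$ for $\mc{E}_3$) and the coefficients along $G(\A_K) \to G(K_v)$, and (c) a $\Psi$ along $\tilde{\mu_v}$; for $\mc{E}_3$ step (c) is trivial since $D_3=\Gm$ and the pushout is already canonically isomorphic to $\mc{E}_{\iso}(K_v/F_u)$ by the Tate compatibility of local and global fundamental classes.

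Next, I would insert a localization $\mc{F'}^v(K/E^v)$ of $\mc{F'}$, defined as the pushout of $\mc{F'}(K/E^v)$ along $D_3(\A_K) \to K_v^\times$. By the identity $a'(\alpha_3) = b'(\alpha_2)$ and the vanishing of $H^1(\Gal(K_v/F_u), K_v^\times)$ from Hilbert 90, the extension $\mc{F'}^v(K/E^v)$ is canonically isomorphic both to $\mc{E}_3^v(K/E^v)$ and to the $b'_v$-pushout of $\mc{E}_2^v(K/E^v)$, and all of these identify canonically with $\mc{E}_{\iso}(K_v/F_u)$. The crucial input is the character-group identity
\begin{equation*}
\bigl(\Z = X^*(D_3) \xrightarrow{\,1 \mapsto \sum_w w\,} X^*(D_2) = \Z[V_K] \xrightarrow{\,\mathrm{pr}_v\,} X^*(\Gm) = \Z\bigr) = \mathrm{id}_{\Z},
\end{equation*}
equivalently $b' \circ \mu_v = \mathrm{id}\colon \Gm \to D_3 = \Gm$, which ensures that the $\tilde{\mu_v}$-identification of $\mc{E}_{\iso}$ with the $\mc{E}_2^v$ side is compatible with the canonical identification on the $\mc{F'}^v$ side.

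Finally, the resulting three-by-three ladder of extensions with columns $(\mc{E}_3,\mc{F'},\mc{E}_2)$ and rows (global, restricted to $\Gal(K/E^v)$, localized at $v$) breaks into interior squares each of which is either a naturality square for $\Phi$ (Lemma \ref{Philem}), a naturality square for $\Psi$ (Lemma \ref{Psilem}), a mixed $\Phi$--$\Psi$ square (Lemma \ref{PhiPsilem}), or a change-of-$G$ compatibility square from Subsection 2.5. Applying these lemmas collapses both composites around the perimeter to the same map, which proves the lemma. The main obstacle is organizational: setting up the ladder so that the hypotheses of each lemma are met, and in particular verifying the subset conditions $g(Y)\subset Y'$ for the various maps between the enlarged $\ov{Y}$-sets that arise from base change and from $\mu_v$.
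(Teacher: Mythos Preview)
Your proposal is correct and follows essentially the same approach as the paper. The paper also reduces to the decomposition group, introduces a localized intermediate gerb $\mc{F}'_v$ (your $\mc{F'}^v$), and verifies commutativity square-by-square via Lemmas \ref{Philem}, \ref{Psilem}, \ref{PhiPsilem}; the only organizational difference is that the paper separates the coefficient change $G(K)\hookrightarrow G(\A_K)$ into its own column, yielding a $3\times 4$ grid rather than your $3\times 3$ ladder, and it phrases the delicate bottom-right compatibility as an equality of images in $H^2(\Gal(K/E^v), D_3(K_v))$ rather than via your character-group identity $b'\circ\mu_v=\mathrm{id}$ (the latter is what makes the $\ov{Y}$-diagram commute, while the former is what makes the gerb diagram commute up to conjugacy---both are needed, and you do gesture at the $H^2$ step with your $a'(\alpha_3)=b'(\alpha_2)$ remark).
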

\begin{proof}
First it suffices to show the lemma in the case where the base field $F$ is the fixed field $E^v \subset K$ of the decomposition group of $K$ at $v$ so we assume this.

Then all the maps in the above diagram are compositions of $\Psi$ and $\Phi$ maps. Hence our strategy is to expand the above diagram to one of the form
\begin{equation*}
\adjustbox{scale=0.80, center}{
    \begin{tikzcd}
H^1_{\bas}(\mc{E}_3(K/E^v), G(K)) \arrow[r] \arrow[d] & H^1_{Y'_{\bas}}(\mc{E}_3(K/E^v), G(\A_K)) \arrow[d] & H^1_{Y'_{\bas}}(\mc{F}', G(\A_K))  \arrow[l, "\sim"] \arrow[r] \arrow[d] & H^1_{\bas}(\mc{E}_2(K/E^v), G(\A_K)) \arrow[d] \\
H^1_{\bas}(\mc{E}^v_3(K/E^v), G(K_v)) \arrow[d] \arrow[r, equals] & H^1_{\bas}(\mc{E}^v_3(K/E^v), G(K_v)) \arrow[d] & H^1_{Y'_{v, \bas}}(\mc{F}'_v, G(K_v)) \arrow[l, "\sim"] \arrow[d] \arrow[r] & H^1_{\bas}(\mc{E}^v_2(K/E^v), G(K_v)) \arrow[d] \\
H^1_{\bas}(\mc{E}_{\iso}(K_v/F_v), G(K_v))  \arrow[r, equal] & H^1_{\bas}(\mc{E}_{\iso}(K_v/F_v), G(K_v))  \arrow[r, equal] &  H^1_{\bas}(\mc{E}_{\iso}(K_v/F_v), G(K_v)) \arrow[r, equal] & H^1_{\bas}(\mc{E}_{\iso}(K_v/F_v), G(K_v)) 
\end{tikzcd} 
}
\end{equation*}
where each small square will consist of $\Psi$ and $\Phi$ maps and hence commute by Lemmas \ref{Philem}, \ref{Psilem}, \ref{PhiPsilem}.
The above diagram will be induced from a diagram of extensions:
\begin{equation*}
\adjustbox{scale=1, center}{
    \begin{tikzcd}
\mc{E}_3(K/E^v) \arrow[r, equal] \arrow[d] & \mc{E}_3(K/E^v) \arrow[d] \arrow[r] & \mc{F}'  \arrow[d] & \mc{E}_2(K/F) \arrow[d] \arrow[l] \\
\mc{E}^v_3(K/E^v)  \arrow[r, equal] & \mc{E}^v_3(K/E^v) \arrow[r] & \mc{F}'_v & \mc{E}^v_2(K/E^v) \arrow[l]  \\
\mc{E}_{\iso}(K_v/F_v) \arrow[r, equal] \arrow[u] & \mc{E}_{\iso}(K_v/F_v)  \arrow[r, equal] \arrow[u] &  \mc{E}_{\iso}(K_v/F_v) \arrow[r, equal] \arrow[u] & \mc{E}_{\iso}(K_v/F_v) \arrow[u]
\end{tikzcd} 
}
\end{equation*}
as well as a diagram of coefficients
\begin{equation*}
\adjustbox{scale=1, center}{
    \begin{tikzcd}
G(K) \arrow[r] \arrow[d] & G(\A_K) \arrow[d] \arrow[r, equal] & G(\A_K)  \arrow[d] & G(\A_K) \arrow[d] \arrow[l, equal] \\
G(K_v)  \arrow[r, equal] & G(K_v) \arrow[r, equal] & G(K_v) & G(K_v) \arrow[l, equal]  \\
G(K_v) \arrow[r, equal] \arrow[u, equal] & G(K_v)  \arrow[r, equal] \arrow[u, equal] &  G(K_v) \arrow[r, equal] \arrow[u, equal] & G(K_v) \arrow[u, equal]
\end{tikzcd} 
}
\end{equation*}
and a diagram of $\ov{Y}$ sets
\begin{equation*}
\adjustbox{scale=1, center}{
    \begin{tikzcd}
\Hom_K(D_3, Z_G) \arrow[r, equal] \arrow[d] & \Hom_K(D_3, Z_G) \arrow[d] \arrow[r, equal] & \Hom_K(D_3, Z_G)  \arrow[d] \arrow[r]& \Hom_K(D_2, Z_G) \arrow[d]  \\
\Hom_{K_v}(D_3, Z_G)  \arrow[r, equal] \arrow[d] & \Hom_{K_v}(D_3, Z_G) \arrow[r, equal] \arrow[d] & \Hom_{K_v}(D_3, Z_G) \arrow[d] \arrow[r] & \Hom_{K_v}(D_2, Z_G)  \arrow[d] \\
\Hom_{K_v}(\Gm, Z_G) \arrow[r, equal]  & \Hom_{K_v}(\Gm, Z_G)   \arrow[r, equal]  &  \Hom_{K_v}(\Gm, Z_G)  \arrow[r, equal]  & \Hom_{K_v}(\Gm, Z_G)  
\end{tikzcd} 
}
\end{equation*}

We need to define all the objects and morphisms in these diagrams. We recall that as in \cite[\S 7.3]{Kot9}, the gerb $\mc{E}^v_3(K/E^v)$ is defined to be $D_3(K_v)\rtimes_{D_3(K)} \mc{E}_3(K/E^v)/N$ where we have $D_3(K) \xrightarrow{\pi} D_3(K_v)$ and $D_3(K) \xrightarrow{\iota} \mc{E}_3(K/E^v)$ and then $N :=\{ (\pi(d)^{-1}, \iota(d)): d \in D_3(K)\}$. The gerb $\mc{E}^v_2(K/E^v)$ was defined already in Section \ref{BGloc}. The gerb $\mc{F}'_v$ is defined to be $D_3(K_v) \rtimes_{D_3(\A_K)} \mc{F}'  / N' $ where now we have $D_3(\A_K) \xrightarrow{\pi'} D_3(K_v)$ and $D_3(\A_K) \xrightarrow{\iota'} \mc{F}'$ and $N' := \{ (\pi'(d)^{-1}, \iota'(d)): d \in D_3(\A_K)\}$. 

In the diagram of gerbs, all the vertical maps except the ones in the third column are parts of the localization map and have already been defined. All the maps in the top row have been constructed already in this section and the maps of the bottom row are all the identity. This leaves the four maps involving $\mc{F}'_v$. The upper vertical one is the natural inclusion of $\mc{F}'$ into the semi-direct product. It is easy to check that the map
\begin{equation*}
    \mc{E}_3(K/E^v) \to \mc{F}'
\end{equation*}
induces a map
\begin{equation*}
    \mc{E}^v_3(K/E^v) = \mc{D}_3(K_v) \rtimes_{ D_3(\A_K)} \mc{E}_3(K/E^v) /N \to \mc{D}_3(K_v) \rtimes_{ D_3(\A_K)} \mc{F}' /N'.
\end{equation*}
and the map
\begin{equation*}
    \mc{E}_2(K/F) \to \mc{F}'
\end{equation*}
induces a map 
\begin{equation*}
    \mc{E}^v_2(K/E^v) = \mc{D}_2(K_v) \rtimes_{ D_2(\A_K)} \mc{E}_2(K/E^v) /N \to \mc{D}_3(K_v) \rtimes_{ D_3(\A_K)} \mc{F}' /N'.
\end{equation*}
In fact, the map $\mc{E}_3(K/E^v) \to \mc{F}'$ induces an isomorphism of extensions.

We can then define by composition
\begin{equation*}
    \mc{E}_{\iso}(K_v/F_v) \to \mc{F}'_v := \mc{E}_{\iso}(K_v/F_v) \to \mc{E}^v_3(K/E^v) \to \mc{F}'_v.
\end{equation*}
Now, in the gerb diagram, all  the squares except the bottom right one are known to commute by construction. We want to deduce that this square commutes up to conjugacy by proving that all the maps are canonical up to conjugacy. To prove the maps are canonical we need to show that $H^1(\Gal(K/E^v), D_3(K_v))=H^1(\Gal(K/E^v), D_2(K_v))=0$. This follows from Lemma \ref{vanishlem} and \cite[Lemma 14.4]{Kot9}. 

The diagrams of $\ov{Y}$-sets and coefficients clearly commute, and this then implies the statement of the lemma.
\end{proof}
\begin{remark}
It is easy to check that the map 
\begin{equation*}
H^1_{\bas}(\mc{E}_3(K/F), G(K)) \to H^1_{\bas}(\mc{E}_2(K/F), G(\A_K))
\end{equation*}
can also be defined for $H^1_{\alg}$ and that the above lemma is also true. We will not need this fact in this paper.
\end{remark}
\subsection{Key global diagram: \texorpdfstring{$G_{\der}$}{Gder} simply connected case}
Let $G$ be a connected reductive group over a global field $F$ and let $K/F$ be a finite Galois extension such that $G$ is split over $K$. In this section and the following we construct the key global diagram for $G$. Namely, the commutative diagram
\begin{equation}{\label{keycommdiagram}}
\begin{tikzcd}
\bigoplus\limits_{u \in V_F} H^1_{\bas}(\mc{E}_{\iso}(K_v/F_u), G(K_v)) \arrow[d] & H^1_{\bas}(\mc{E}_2(K/F), G(\A_K))  \arrow[l, "l^F", swap] \arrow[r] \arrow[d] & H^1_{\bas}(\mc{E}_1(K/F), G(\A_K)/Z_G(K)) \arrow[d] \\
   \bigoplus\limits_{u \in V_F} X^*(Z(\widehat{G}))_{\Gal(K_v/F_u)} & \left[\bigoplus\limits_{v \in V_K} X^*(Z(\widehat{G}))\right]_{\Gal(K/F)} \arrow[r, "\Sigma"] \arrow[l, "\sim",swap]& X^*(Z(\widehat{G}))_{\Gal(K/F)} 
\end{tikzcd}    
\end{equation}
where the bottom left map is given as the composition
\begin{align*}
    \left[\bigoplus\limits_{v \in V_K} X^*(Z(\widehat{G}))\right]_{\Gal(K/F)} &= \left[X^*(Z(\widehat{G}))\otimes \Z[V_K]\right]_{\Gal(K/F)}\\
    &= \bigoplus\limits_{u \in V_F}  (X^*(Z(\widehat{G})) \otimes \Z[V_u])_{\Gal(K/F)}\\
    &\cong \bigoplus\limits_{u \in V_F} X^*(Z(\widehat{G}))_{\Gal(K_v/F_u)},
\end{align*}
where $V_u$ consists of the places of $K$ over $u$. The map $\Sigma$ is induced by the map $\bigoplus\limits_u X^*(Z(\widehat{G})) \to X^*(Z(\widehat{G}))$ summing all the coordinates together. Such a diagram is already known to exist in the case of tori (see \cite[pg 6]{Kot9}). In this section  we prove it for $G$ such that $G_{\der}$ is simply connected. In the next section we tackle the general case using $z$-extensions. In fact we construct a bit more than this diagram because we are also able to construct the middle vertical map for algebraic cocycles (not just basic). Namely, we get a map
\begin{equation*}
H^1_{\alg}(\mc{E}_2(K/F), G(\A_K)) \to \left[ \bigoplus\limits_{v \in V_K} X^*(Z(\widehat{G}))\right]_{\Gal(K/F)}.    
\end{equation*}

Suppose now that $G$ is a connected reductive group over $F$ and $G_{\der}$ is simply connected. Then we define $D$ to be the torus given by $G/ G_{\der}$. Note that by assumption, $\widehat{D}$ and $Z(\widehat{G})$ are canonically isomorphic. We now consider the following diagram:
\begin{equation}
\begin{tikzcd}
\bigoplus\limits_{u \in V_F} H^1_{\bas}(\mc{E}_{\iso}(K_v/F_u), G(K_v)) \arrow[d] & H^1_{\bas}(\mc{E}_2(K/F), G(\A_K))  \arrow[l, "l^F", swap] \arrow[r] \arrow[d] & H^1_{\bas}(\mc{E}_1(K/F), G(\A_K)/Z_G(K)) \arrow[d] \\
 \bigoplus\limits_{u \in V_F} H^1_{\bas}(\mc{E}_{\iso}(K_v/F_u), D(K_v)) \arrow[d, "\simeq"]  & H^1_{\alg}(\mc{E}_2(K/F), D(\A_K)) \arrow[d, "\simeq"] \arrow[r] \arrow[l, "\sim"]  & H^1_{\alg}(\mc{E}_1(K/F), D(\A_K)/D(K)) \arrow[d, "\simeq"] \\
   \bigoplus\limits_{u \in V_F} X^*(Z(\widehat{G}))_{\Gal(K_v/F_u)} & \left[\bigoplus\limits_{v \in V_K} X^*(Z(\widehat{G}))\right]_{\Gal(K/F)} \arrow[r, "\Sigma"] \arrow[l, "\sim"]& X^*(Z(\widehat{G}))_{\Gal(K/F)} 
\end{tikzcd}    
\end{equation}
The top and middle left arrows are the total localization maps and the top and middle right arrows are the ones we constructed in the previous subsection. The vertical arrows from the first to second row are all induced via functoriality from the map $G \to D$. The vertical bottom left arrow is the sum of local Kottwitz maps $\kappa_G$ and the remaining two vertical arrows are as in the diagram \cite[pg 6]{Kot9}. The commutativity of the lower right square is as in loc cit. and the commutativity of the bottom left square is \cite[Lemma 7.4]{Kot9}. All the maps comprising the upper and middle right arrows are given by compositions of the $\Psi$ and $\Phi$ maps while the upper and middle vertical arrows come from changing coefficients. Hence this square commutes by an easy application of Lemmas \ref{PhiPsilem} and \ref{Philem}. To show the upper left square commutes, we need only show it commutes for a fixed place $u$. Since the localization map is a composition of $\Psi$ and $\Phi$ and restriction maps, this again follows from Lemma \ref{PhiPsilem} and Lemma \ref{Philem}.

\subsection{Key global diagram: general case}
In this section we use the theory of $z$-extensions to construct the key global diagram for connected reductive $G$ over $F$ and split by $K$. Our argument will be analogous to similar arguments for $H^1_{\alg}(\mc{E}_2(K/F), G(K))$ appearing in \cite{Kot9}.

We need some preliminaries. Unfortunately, we need to prove each lemma for the gerb $\mc{E}_2(K/F)$ with algebraic cocycles valued in $G(\A_K)$ as well as the gerb $\mc{E}_1(K/F)$ with basic cocycles valued in $G(\A_K)/Z_G(K)$.

\begin{lemma}{\label{fiberact}}
Let
\begin{equation*}
    1 \to Z \xrightarrow{i} G' \xrightarrow{p} G \to 1
\end{equation*}
be a central extension of linear algebraic groups over $F$.
\begin{enumerate}
\item The group $H^1_{\alg}(\mc{E}_2(K/F), Z(\A_K))$ acts on the fibers of 
\begin{equation*}
    p: H^1_{\alg}(\mc{E}_2(K/F), G'(\A_K)) \to H^1_{\alg}(\mc{E}_2(K/F), G(\A_K)).
\end{equation*}
When $p: G'(\A_K) \to G(\A_K)$ is surjective, these actions are transitive.
\item The group $H^1_{\bas}(\mc{E}_1(K/F), Z(\A_K)/Z(K))$ acts on the fibers of 
\begin{equation*}
    p: H^1_{\bas}(\mc{E}_1(K/F), G'(\A_K)/Z_{G'}(K)) \to H^1_{\alg}(\mc{E}_2(K/F), G(\A_K)/Z_G(K)).
\end{equation*}
When $Z$ is the Weil restriction of a split $K$-torus, these actions are transitive.
\end{enumerate}
\end{lemma}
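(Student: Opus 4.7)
The construction of the action is pointwise multiplication: given cocycles $(\nu', x') \in Z^1_{\alg}(\mc{E}_2(K/F), G'(\A_K))$ and $(\mu, z) \in Z^1_{\alg}(\mc{E}_2(K/F), Z(\A_K))$, set
\[ (\nu', x') \cdot (\mu, z) := \bigl(\nu' \cdot i(\mu),\ x' \cdot i(z)\bigr), \]
with $i \colon Z \hookrightarrow G'$. The key point is that $i(\mu)$ and $i(z)$ are central in $G'$ and $G'(\A_K)$, so $x' \cdot i(z)$ is still a cocycle, $\Int((x' \cdot i(z))_w) = \Int(x'_w)$, and the cocycle condition (2) in Definition~\ref{cocycledef} for the product reduces on the central factor to $\sigma(\mu) = \mu$, which follows from the cocycle condition for $(\mu, z)$ combined with triviality of $\Int(z_w)$ on the abelian $Z$. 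That $\nu' \cdot i(\mu) \in \ov{Y_2^{G'}}$ uses that the $G'(\A_K) \rtimes \Gal$-action on $\Hom(D_2, G')$ fixes central-valued maps, so multiplication by $i(\mu)$ commutes with the orbit action defining $\ov{Y_2^{G'}}$. Descending to $H^1$ is routine: an equivalence witnessed by $m' \in G'(\A_K)$ on the $G'$ side is still a witness after multiplying by $(\mu, z)$ (centrality again), and similarly in the $Z$ variable. Fibers are preserved since $p \circ i = 1$.

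For transitivity when $p \colon G'(\A_K) \to G(\A_K)$ is surjective, take two classes $[\nu'_1, x'_1]$ and $[\nu'_2, x'_2]$ in the same fiber and, after adjusting one by the lift $\widetilde m \in G'(\A_K)$ of any equivalence $m \in G(\A_K)$ between their $p$-images, assume the chosen representatives satisfy $p \circ \nu'_1 = p \circ \nu'_2$ and $p \circ x'_1 = p \circ x'_2$. Then the pointwise ratio
\[ \mu := \nu'_1 \cdot (\nu'_2)^{-1}, \qquad z_w := x'_{1,w} \cdot (x'_{2,w})^{-1}, \]
is $i(Z)$-valued and, via $i^{-1}$, defines a $Z$-cocycle in $Z^1_{\alg}(\mc{E}_2(K/F), Z(\A_K))$; its class moves $[\nu'_2, x'_2]$ to $[\nu'_1, x'_1]$.

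Part~(2) follows the same template with $\mc{E}_1(K/F)$ and quotient coefficients $G'(\A_K)/Z_{G'}(K)$ in place of $\mc{E}_2(K/F)$ and $G'(\A_K)$. The Weil-restriction hypothesis on $Z$ plays the role that surjectivity of $p \colon G'(\A_K) \to G(\A_K)$ played in part~(1): via Shapiro's lemma and Hilbert~90 it provides the vanishing of the relevant $H^1$ of $Z$, which in turn yields surjectivity of the induced map $G'(\A_K)/Z_{G'}(K) \to G(\A_K)/Z_G(K)$ needed to lift equivalences from the $G$ side.

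The main obstacle is the bookkeeping for the $\ov{Y}$ sets: verifying both that pointwise multiplication by $i(\mu)$ preserves the orbit structure defining $\ov{Y_2^{G'}}$ and, conversely, that the ratio of two elements of $\ov{Y_2^{G'}}$ with equal $p$-projections lies in $\ov{Y_2^Z}$. Both reduce to the fact that the $G'(\A_K)$-action on $\Hom(D_2, G')$ acts trivially on central-valued maps, so the orbit structures are compatible with the multiplicative structure in the expected way.
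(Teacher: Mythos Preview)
Your argument for part~(1), and the construction of the action in both parts, matches the paper's approach. The gap is in part~(2): you identify that the Weil-restriction hypothesis yields surjectivity of $G'(\A_K)/Z_{G'}(K) \to G(\A_K)/Z_G(K)$, but surjectivity alone is not enough to run the transitivity argument. After lifting the equivalence you have two cocycles valued in $G'(\A_K)/Z_{G'}(K)$ whose images in $G(\A_K)/Z_G(K)$ agree; to conclude their ratio comes from $Z(\A_K)/Z(K)$ you need \emph{middle exactness} of
\[
Z(\A_K)/Z(K) \xrightarrow{\ i\ } G'(\A_K)/Z_{G'}(K) \xrightarrow{\ p\ } G(\A_K)/Z_G(K),
\]
i.e.\ $\ker(p)=\im(i)$. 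Unlike in part~(1), where $\ker\bigl(G'(\A_K)\to G(\A_K)\bigr)=Z(\A_K)$ is automatic from left-exactness of points, this identity of quotients is not formal. The paper proves it by showing $Z_{G'}(K)\to Z_G(K)$ is surjective (using $H^1(\Gal(\ov{K}/K),Z)=0$, which is where the split-torus assumption enters), and then lifting an element of $\ker(p)$ first to $G'(\A_K)$, correcting by an element of $Z_{G'}(K)$ to land in $Z(\A_K)$. The remark following the paper's proof shows this step is genuinely delicate: for $Z=\mu_8$ the Grunwald--Wang phenomenon makes the sequence fail to be middle-exact.

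A minor point: your discussion of the ``main obstacle'' focuses on $\ov{Y}$, but the cocycle condition requires $\nu\in Y$, not merely $\nu\in\ov{Y}$. Since $Y_2^{G'}=\Hom_K(D_2,G')$ and $Y_2^Z=\Hom_K(D_2,Z)$, closure under multiplication by central $\mu$ and the fact that a ratio with trivial $p$-image lands in $\Hom_K(D_2,Z)$ are both immediate from $Z=\ker(G'\to G)$ and centrality; no orbit bookkeeping is needed.
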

\begin{proof}
We first construct the action of $H^1_{\alg}(\mc{E}_2(K/F), Z(\A_K))$ on the fibers of $H^1_{\alg}(\mc{E}_2(K/F), G'(\A_K))$. Pick $b \in H^1_{\alg}(\mc{E}_2(K/F), G'(\A_K))$ and $c \in H^1_{\alg}(\mc{E}_2(K/F), Z(\A_K))$ and pick cocycle representatives $(\nu, x)$ of $b$ and $(\mu, y)$ of $c$. Then $(\nu\mu, xy)$ is another cocycle of $H^1_{\alg}(\mc{E}_2(K/F), G'(\A_K))$ and this cocycle agrees with $(\nu, x)$ when we project to $Z^1(\mc{E}_2(K/F), G(\A_K))$ via $p$. If we replace $(\mu, y)$ with $z \cdot(\mu, y)$ and $(\nu, x)$ with $m \cdot (\nu, x)$ for $z \in Z(\A_K)$ and $m \in G'(\A_K)$ then the corresponding product gives $mz \cdot (\nu\mu, xy)$ which is clearly in the same cohomology class as $(\nu\mu, xy)$. An analogous argument proves the first part of $(2)$.

We now show the second part of $(1)$: that when $p: G'(\A_K) \to G(\A_K)$ is surjective, the action is transitive. Pick $b, b' \in H^1_{\alg}(\mc{E}_2(K/F), G'(\A_K))$ in the same fiber under $p$ and fix cocycle representatives $(\nu, x)$ and $(\nu', x')$.  We can assume that the cocycles become equal in $Z^1_{\alg}(\mc{E}_2(K/F), G(\A_K))$. Indeed, since the cocycles are in the same cohomology class in $H^1_{\alg}(\mc{E}_2(\A_K),G(\A_K))$, we can pick some $m \in G(\A_K)$ such that $m \cdot (\nu, x) = (\nu', x')$. We then use surjectivity to pick $m' \in G'(\A_K)$ such that $p(m')=m$ and observe that $m' \cdot (\nu, x)$ and $(\nu', x')$ agree in $Z^1_{\alg}(\mc{E}_2(K/F), G(\A_K))$. It is then easy to check that  $(\nu, x)$ and $(\nu', x')$ differ by a unique cocycle $(\mu, y) \in Z^1_{\alg}(\mc{E}_2(K/F), Z(\A_K))$.

We now show the second part of $(2)$. We first remark that since $H^1(\Gal(\ov{K}/K), Z(\ov{\A_K}))$ vanishes by assumption, we have a surjection $G'(\A_K) \twoheadrightarrow G(\A_K)$ and hence a surjection $G'(\A_K)/Z_{G'}(K) \twoheadrightarrow G(\A_K)/Z_G(K)$. We claim that 
\begin{equation*}
Z(\A_K)/Z(K) \xrightarrow{i} G'(\A_K)/Z_{G'}(K) \xrightarrow{p} G(\A_K)/Z_G(K) \to 1
\end{equation*}
is exact. We have already shown that $p$ is surjective so it remains to show that $\im(i) = \ker(p)$. It is clear that $\im(i) \subset \ker(p)$. To show the other inclusion, pick $g' \in \ker(p)$. Lift $g'$ to an element $\tilde{g'} \in G'(\A_K)$. Then we must have that $p(\tilde{g'}) \in Z_G(K)$. Since $H^1(\Gal(\ov{K}/K), Z)=0$, we have that
\begin{equation*}
    1 \to Z(K) \to Z_{G'}(K) \to Z_G(K) \to 1
\end{equation*}
is exact. Hence we can pick $z \in Z_{G'}(K)$ such that $p(\tilde{g'})=p(z)$. Now, $p(z^{-1}\tilde{g'})=1$ and so there is some $z' \in Z(\A_K)$ so that $i(z')=z^{-1}\tilde{g'}$. It follows that the projection of $z'$ to $Z(\A_K)/Z(K)$ maps to the projection of $z^{-1}\tilde{g'}$ to $G'(\A_K)/Z_{G'}(K)$ which is precisely $g'$. Hence, $g' \in \im(i)$.

At this point, we can prove transitivity using the same argument as in $(1)$. Namely, we pick cocycles $(\nu, x), (\nu', x') \in Z^1_{\bas}(\mc{E}_1(K/F), G'(\A_K)/Z_{G'}(K))$ which we can assume map to the same cocycle in $Z^1_{\bas}(\mc{E}_1(K/F), G(\A_K)/Z_G(K))$ since we have a surjection $p: G'(\A_K)/Z_{G'}(K) \to G(\A_K)/Z_G(K)$. Then by middle-exactness, it follows that $(\nu, x)$ and $(\nu', x')$ differ by a cocycle of $Z^1_{\bas}(\mc{E}_1(K/F), Z(\A_K)/Z(K))$ as desired.
\end{proof}
\begin{remark}
We really do need the stronger assumption to prove part $(2)$. Indeed, consider
\begin{equation*}
    1 \to \mu_8 \to \Gm \xrightarrow{p} \Gm \to 1,
\end{equation*}
where $p$ is the $8$th power map and let $K=\Q(\sqrt{7})$. Then by the counter-example to the Grunwald-Wang theorem, there is an element $g \in \Gm(\A_K)$ such that $p(g)=16$ but no such element can be contained in $\Gm(K)$. Hence, 
\begin{equation*}
    \mu_8(\A_K)/\mu_8(K) \to \Gm(\A_K)/\Gm(K) \xrightarrow{p} \Gm(\A_k)/\Gm(K)
\end{equation*}
is not middle-exact.
\end{remark}
\begin{lemma}{\label{surjtori}}
Let
\begin{equation*}
    1 \to T_1 \to T_2 \to T_3 \to 1
\end{equation*}
be a short exact sequence of $F$-tori that are split by $K$. Then the natural maps
\begin{equation*}
    H^1_{\alg}(\mc{E}_2(K/F), T_2(\A_K)) \to H^1_{\alg}(\mc{E}_2(K/F), T_3(\A_K))
\end{equation*}
and
\begin{equation*}
    H^1_{\bas}(\mc{E}_1(K/F), T_2(\A_K)/T_2(K)) \to H^1_{\bas}(\mc{E}_1(K/F), T_3(\A_K)/T_3(K))
\end{equation*}
are surjective.
\end{lemma}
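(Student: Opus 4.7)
My plan is to reduce both statements to the torus case of the key global diagram, which is already established by Kottwitz in \cite{Kot9}. For any torus $T$ split over $K$, since $Z_T = T$ every element of $\Hom_K(D_i, T)$ automatically factors through the center, so in this setting $H^1_{\alg} = H^1_{\bas}$. Kottwitz constructs functorial isomorphisms
\begin{align*}
    H^1_{\alg}(\mc{E}_2(K/F), T(\A_K)) &\xrightarrow{\sim} \left[ \bigoplus_{v \in V_K} X_*(T) \right]_{\Gal(K/F)},\\
    H^1_{\bas}(\mc{E}_1(K/F), T(\A_K)/T(K)) &\xrightarrow{\sim} X_*(T)_{\Gal(K/F)},
\end{align*}
natural in morphisms of $K$-split tori; for tori these are the vertical isomorphisms in the key diagram of \cite[pg 6]{Kot9}.

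The short exact sequence $1 \to T_1 \to T_2 \to T_3 \to 1$ induces on cocharacter lattices a short exact sequence of $\Gal(K/F)$-modules
\begin{equation*}
    0 \to X_*(T_1) \to X_*(T_2) \to X_*(T_3) \to 0,
\end{equation*}
so in particular $X_*(T_2) \twoheadrightarrow X_*(T_3)$ is surjective. Tensoring with the free abelian group $\Z[V_K]$ and then taking $\Gal(K/F)$-coinvariants are each right-exact operations, so the induced maps
\begin{equation*}
\left[ \bigoplus_v X_*(T_2) \right]_{\Gal(K/F)} \twoheadrightarrow \left[ \bigoplus_v X_*(T_3) \right]_{\Gal(K/F)} \quad \text{and} \quad X_*(T_2)_{\Gal(K/F)} \twoheadrightarrow X_*(T_3)_{\Gal(K/F)}
\end{equation*}
are surjective. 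Combined with the naturality of the Kottwitz identifications above, this yields the two claimed surjectivity statements.

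The principal thing to check is the naturality in $T$ of these identifications with respect to the morphism $T_2 \to T_3$. I expect this to be transparent from the constructions in \cite{Kot9}: the map on $H^1$ induced by $T_2 \to T_3$ is a composition of $\Phi$ and $\Psi$ constructions (cf.\ \S\ref{bgmaps}), and the description of cohomology classes in terms of cocharacter data is manifestly compatible with change of torus, so the required commutative squares fall out of Lemmas \ref{Philem}, \ref{Psilem}, and \ref{PhiPsilem}. This routine unpacking is the main potential obstacle; a more hands-on alternative, lifting $\nu_3$ to $\nu_2$ via the vanishing of $\Ext^1(X^*(T_1), \Z[V_K])$ and then extending the cocycle $x_3$, runs into an $H^2(\Gal(K/F), T_1(\A_K))$ obstruction that does not obviously vanish, which is why the coinvariants approach is preferable.
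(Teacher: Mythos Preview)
Your proposal is correct and is essentially the same argument as the paper's: both invoke Kottwitz's functorial isomorphisms $H^1_{\alg}(\mc{E}_2(K/F), T(\A_K)) \cong (X_*(T)\otimes\Z[V_K])_{\Gal(K/F)}$ and $H^1_{\bas}(\mc{E}_1(K/F), T(\A_K)/T(K)) \cong X_*(T)_{\Gal(K/F)}$ and then use right-exactness of these functors in $T$. Your extra paragraph on naturality and the remark about the direct-lifting approach are reasonable elaborations, but the paper simply cites the isomorphisms and right-exactness without further comment.
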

\begin{proof}
The surjectivity of the first map follows from the isomorphism for such tori given in \cite{Kot9}:
\begin{equation*}
    H^1_{\alg}(\mc{E}_2(K/F), T(\A_K)) \cong (X_*(T) \otimes \Z[V_K])_{\Gal(K/F)}
\end{equation*}
and the fact that the functor $T \mapsto (X_*(T) \otimes \Z[V_K])_{\Gal(K/F)}$ is right exact.

The surjectivity of the second map follows for the same reason from the isomorphism of Kottwitz:
\begin{equation*}
    H^1_{\bas}(\mc{E}_1(K/F), T(\A_K)/T(K)) \cong X_*(T)_{\Gal(K/F)}
\end{equation*}
\end{proof}
\begin{lemma}{\label{middleexact}}
Let 
\begin{equation*}
    1 \to N \xrightarrow{i} G' \xrightarrow{p} G \to 1 
\end{equation*}
be a short exact sequence of linear algebraic groups over $F$.
\begin{enumerate}
    \item If $p: G'(\A_K) \to G(\A_K)$ is surjective, then 
\begin{equation*}
    H^1_{\alg}(\mc{E}_2(K/F), N(\A_K)) \xrightarrow{i} H^1_{\alg}(\mc{E}_2(K/F), G'(\A_K)) \xrightarrow{p} H^1_{\alg}(\mc{E}_2(K/F), G(\A_K))
\end{equation*}
is an exact sequence of pointed sets.
\item Now suppose that $i(Z_N) \subset Z_{G'}$ and that $N(\A_K)/Z_N(K) \to G'(\A_K)/Z_{G'}(K) \to G(\A_K)/Z_G(K) \to 1$ is exact. Then
\begin{equation*}
    H^1_{\alg}(\mc{E}_2(K/F), N(\A_K)/Z_N(K)) \xrightarrow{i} H^1_{\alg}(\mc{E}_2(K/F), G'(\A_K)/Z_{G'}(K)) \xrightarrow{p} H^1_{\alg}(\mc{E}_2(K/F), G(\A_K)/Z_G(K))
\end{equation*}
is an exact sequence of pointed sets.
\end{enumerate}
\end{lemma}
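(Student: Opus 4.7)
The plan is to run the standard kernel-image argument for exactness in non-abelian cohomology, twice, with the adelic surjectivity / middle-exactness hypotheses supplying the one nontrivial step in each case. The inclusion $\im(i) \subset \ker(p)$ is immediate in both parts since $p \circ i$ is the trivial homomorphism at the level of coefficient groups, which forces the induced map on cocycles to factor through the basepoint. So all the content is in proving the reverse inclusion.

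For part (1), suppose $(\nu, x) \in Z^1_{\alg}(\mc{E}_2(K/F), G'(\A_K))$ represents a class in $\ker(p)$. Then the cocycle $(p \circ \nu, p \circ x) \in Z^1_{\alg}(\mc{E}_2(K/F), G(\A_K))$ is equivalent to the trivial cocycle via some $m \in G(\A_K)$. Using the hypothesis that $p: G'(\A_K) \to G(\A_K)$ is surjective, lift $m$ to $m' \in G'(\A_K)$; then $m' \cdot (\nu, x)$ represents the same class in $H^1_{\alg}(\mc{E}_2(K/F), G'(\A_K))$, and by construction its image under $p$ is the trivial cocycle. Replacing $(\nu, x)$ by this translate, we may assume $p \circ \nu$ is trivial and $p(x_w) = 1$ for all $w \in \mc{E}_2(K/F)$. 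Since $N = \ker(p)$, it follows that $\nu$ factors through $N$ and $x$ is valued in $N(\A_K)$, so $(\nu, x)$ is the image under $i$ of an element of $Z^1_{\alg}(\mc{E}_2(K/F), N(\A_K))$. One then has to check that the resulting element of $Z^1_{\alg}$ is a legitimate algebraic cocycle with respect to the data $(\overline{Y}, Y, \xi)$ attached to $N$; this is routine from the definition in Definition \ref{adeliccoeffs}, because the inclusion $N \hookrightarrow G'$ is a closed immersion of algebraic groups and the orbit formalism for $\overline{Y}$ is preserved.

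For part (2), the strategy is identical but applied to cocycles of $\mc{E}_1(K/F)$ valued in the quotients $G'(\A_K)/Z_{G'}(K)$ and $G(\A_K)/Z_G(K)$, with $N(\A_K)/Z_N(K)$ as the kernel. Given $(\nu, x)$ whose image in $H^1_{\bas}(\mc{E}_1(K/F), G(\A_K)/Z_G(K))$ is trivial, translate by an element witnessing triviality; the assumed surjectivity of $G'(\A_K)/Z_{G'}(K) \twoheadrightarrow G(\A_K)/Z_G(K)$ (the last term of the hypothesized exact sequence) allows us to lift this witness to $G'(\A_K)/Z_{G'}(K)$. After translation, $\nu$ and $x$ become trivial in the quotient for $G$, and now the hypothesized middle-exactness of $N(\A_K)/Z_N(K) \to G'(\A_K)/Z_{G'}(K) \to G(\A_K)/Z_G(K)$ lets us identify $\nu$ as factoring through $N$ and $x$ as $N(\A_K)/Z_N(K)$-valued, giving the required preimage. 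This is essentially the same argument Kottwitz uses, and the same middle-exactness input already appeared in the proof of Lemma \ref{fiberact}(2).

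The one place calling for care is the claim that after translating by a lift, the resulting cocycle actually maps to the \emph{chosen} trivial representative and not merely to one in its equivalence class. In the non-abelian setting one does not a priori have exactness at this precise level; the point is that the $Y$-data for $G$ is preserved under the $G(\A_K)$-action on $\overline{Y}$, so the basepoint cocycle representing the trivial class can always be pulled back to a cocycle in the image of $i$. The anticipated main obstacle is therefore bookkeeping: verifying that after all these translations, the $\nu$-component indeed lies in $Y_N$ (the appropriate $Y$-set for $N$) and not merely in $\overline{Y_N}$. This follows from the fact that the inclusion $N \hookrightarrow G'$ induces a compatible inclusion of $Y$-data, so no genuinely new cohomological input is required beyond the two hypotheses stated in the lemma.
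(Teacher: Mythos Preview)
Your proof follows essentially the same route as the paper's own argument: the containment $\im(i)\subset\ker(p)$ is immediate, and for the reverse you translate a cocycle representing a class in $\ker(p)$ by a lift of the trivializing element (using the surjectivity hypothesis in part (1), and the assumed exactness on coefficient groups in part (2)) so that the resulting cocycle lands in the image of $N$. The paper's proof is extremely terse---three sentences for (1) and one for (2)---and you have expanded it while correctly identifying that part (2) is really about $\mc{E}_1$ with the quotient coefficients $G(\A_K)/Z_G(K)$ (despite the displayed sequence in the statement); your added remarks on the $Y$-data bookkeeping go beyond what the paper says but do not change the strategy.
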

\begin{proof}
We prove $(1)$ first. If $b \in H^1_{\alg}(\mc{E}_2(K_F), G'(\A_K))$ is in the image of $i$, then $p(b)$ is trivial since $N(\A_K) = \ker(G'(\A_K) \to G(\A_K))$. On the other hand, if $p(b)$ is trivial, then we can pick a cocycle representative $(\nu, x)$ and using the surjectivity of $p$, may assume that the image of $x$ lies in $\ker(p)=i(N(\A_K))$. Then $(\nu, x)$ gives an element of $Z^1_{\alg}(\mc{E}_2(K/F), N(\A_K))$ and hence $b$ lies in the image of $i$.

To prove $(2)$, use the same argument noting that we have a stronger assumption to preclude the possible failure of middle exactness.
\end{proof}
The following proposition is an analogue of \cite[Prop. 2.8]{Kot9}.
\begin{proposition}{\label{surjprop}}
Let
\begin{equation*}
    1 \to Z \xrightarrow{i} G' \xrightarrow{p} G \to 1
\end{equation*}
be a short exact sequence of linear algebraic $F$-groups such that $Z$ is a torus that splits over $K$ and is central in $G'$. Then the natural maps 
\begin{equation*}
    p: H^1_{\alg}(\mc{E}_2(K/F), G'(\A_K)) \to H^1_{\alg}(\mc{E}_2(K/F), G(\A_K))
\end{equation*}
and 
\begin{equation*}
    p: H^1_{\bas}(\mc{E}_1(K/F), G'(\A_K)/Z_{G'}(K)) \to H^1_{\bas}(\mc{E}_1(K/F), G(\A_K)/Z_G(K))
\end{equation*}
are surjective. Moreover, they induce bijections between $H^1_{\alg}(\mc{E}_2(K/F), G(\A_K))$ and the quotient of $H^1_{\alg}(\mc{E}_2(K/F), G'(\A_K))$ by the action of $H^1_{\alg}(\mc{E}_2(K/F), Z(\A_K))$ as well as between $H^1_{\bas}(\mc{E}_1(K/F), G(\A_K)/Z_G(K))$ and the quotient of $H^1_{\bas}(\mc{E}_1(K/F), G'(\A_K)/Z_{G'}(K))$ by $H^1_{\bas}(\mc{E}_1(K/F), Z(\A_K)/Z(K))$.
\end{proposition}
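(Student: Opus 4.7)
The proposition has two assertions---surjectivity of $p$ and the induced bijection with the $H^1(Z)$-quotient---and I would derive the bijection from surjectivity using Lemma \ref{fiberact}. Its transitivity conclusions apply in our setting: since $Z$ is a $K$-split torus, local Hilbert 90 gives $G'(\A_K) \twoheadrightarrow G(\A_K)$ (covering Lemma \ref{fiberact}(1)), and the middle-exact sequence $Z(\A_K)/Z(K) \to G'(\A_K)/Z_{G'}(K) \to G(\A_K)/Z_G(K) \to 1$ established within the proof of Lemma \ref{fiberact}(2) covers the $\mc{E}_1$-case (using $H^1(\Gal(\ov K/K), Z) = 0$). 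Combined with surjectivity, these identify the target with the $H^1(Z)$-orbit space.

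The real content is surjectivity of $p_*$. Given a representative cocycle $(\nu, x)$ of a class in $H^1_{\alg}(\mc{E}_2(K/F), G(\A_K))$, the first step is to lift $\nu : D_2 \to G$ to $\nu' : D_2 \to G'$. Since $\nu$ factors through a $K$-split subtorus $T \subset G$, its preimage $T' := p^{-1}(T) \subset G'$ is again a $K$-split torus (being a central extension of the torus $T$ by the torus $Z$). The free-abelian character sequence $0 \to X^*(T) \to X^*(T') \to X^*(Z) \to 0$ splits, so applying $\Hom_\Z(-, X^*(D_2))$ yields a surjection $\Hom_K(D_2, T') \twoheadrightarrow \Hom_K(D_2, T)$ and furnishes the lift $\nu'$.

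The second step is to lift the cocycle $x$ to an algebraic cocycle $x'$ valued in $G'(\A_K)$ with $x'|_{D_2(\A_K)} = \xi(\nu')$. The plan, following \cite[Prop.~2.8]{Kot9}, is to reduce to the torus case handled by Lemma \ref{surjtori}. The twisted-centralizer condition (2) of Definition \ref{cocycledef} forces $x_w$ into the set of elements conjugating $\sigma(\nu)$ to $\nu$, a subset of $G(\A_K)$ which---because $Z$ is central---admits a precise relation with the adelic points of the torus extension $1 \to Z \to T' \to T \to 1$. Applying the surjectivity assertion of Lemma \ref{surjtori} to this toral extension, together with the exact sequence of Lemma \ref{middleexact}, produces an algebraic cocycle $(\nu', x')$ mapping to $[\nu, x]$. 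The $\mc{E}_1$-variant is proved in parallel using the $\mc{E}_1$-halves of Lemmas \ref{fiberact}, \ref{middleexact}, and \ref{surjtori}.

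The hardest step is the cocycle lifting. A naive set-theoretic lift $y$ of $x$ with $y|_{D_2(\A_K)} = \xi(\nu')$ has defect class in $H^2(\Gal(K/F), Z(\A_K))$, which does not vanish in general, so obstruction theory alone cannot succeed. What saves the argument is the algebraic structure: condition (2) restricts $x$ to a tight enough subgroup that the lifting becomes a toral question, and Lemma \ref{surjtori} solves the toral version directly. Verifying that the resulting cocycle $x'$ is algebraic (rather than just an abstract cocycle of some centralizer) and carrying out the $\mc{E}_1$ analogue with its additional quotient by $Z_G(K)$ is the bulk of the technical work and follows the bookkeeping pattern of \cite{Kot9}.
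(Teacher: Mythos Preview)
Your plan for deriving the bijection from surjectivity via Lemma \ref{fiberact}, and your lift of $\nu$ through the split torus $T' = p^{-1}(T)$, are both correct and match the paper. The gap is in the cocycle-lifting step.

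You propose that condition (2) of Definition \ref{cocycledef} confines $x_w$ to a set admitting ``a precise relation'' with $T(\A_K)$, so that Lemma \ref{surjtori} applied to $1 \to Z \to T' \to T \to 1$ lifts $x$. But condition (2) only places $x_w$ in the transporter of $\sigma(\nu)$ to $\nu$, which is a coset of the centralizer $Z_G(\mathrm{im}\,\nu)(\A_K)$. That centralizer is generally far larger than $T(\A_K)$ (if $\nu$ is central it is all of $G(\A_K)$), so the class $[\nu,x]$ need not lie in the image of $H^1_{\alg}(\mc{E}_2(K/F), T(\A_K))$, and Lemma \ref{surjtori} for $T' \to T$ does not apply. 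You correctly identify that the naive $H^2(\Gal(K/F), Z(\A_K))$ obstruction is nonzero, but the proposed workaround does not circumvent it.

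The paper's route (which is what \cite[Prop.~2.8]{Kot9} actually does) is different: one first \emph{enlarges} the kernel by pushing out along $Z \hookrightarrow Z'' := \Res_{K/F} Z_K$ to obtain $G''$. Because $Z''$ is induced, both obstructions vanish: $H^1(\Gal(K/F), \Hom_K(D_2, Z'')) = 0$ lets one adjust $\nu'$ to a $\nu''$ satisfying condition (2), and $H^2(\Gal(K/F), Z''(\A_K)) = 0$ (Shapiro) kills the $2$-cocycle defect of a set-theoretic lift of $x$. This yields surjectivity of $H^1_{\alg}(\mc{E}_2(K/F), G''(\A_K)) \to H^1_{\alg}(\mc{E}_2(K/F), G(\A_K))$. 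One then \emph{descends} back to $G'$ using the quotient torus $C := Z''/Z$: Lemma \ref{surjtori} is applied to $Z'' \to C$ (not to $T' \to T$), and combined with Lemma \ref{middleexact} for $G' \to G'' \to C$ this shows every class in $H^1_{\alg}(\mc{E}_2(K/F), G''(\A_K))$ can be translated by $H^1_{\alg}(\mc{E}_2(K/F), Z''(\A_K))$ into the image from $G'$. So Lemma \ref{surjtori} enters in the descent step on the kernel side, not as a direct lift of $x$ through $T$.
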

\begin{proof}
We first give the proof for the statements involving $\mc{E}_2(K/F)$. To begin, pick $b \in H^1_{\alg}(\mc{E}_2(K/F), G(\A_K))$ and let $(\nu, x)$ be a cocycle representative of $b$. Since $D_2$ is $K$-split, we have that the image of $\nu$ in $G_K$ is a split $K$-torus $T \subset G$. Then pulling back along $p$ we have a short exact sequence of $K$-tori
\begin{equation*}
    1 \to Z \to T' \to T \to 1.
\end{equation*}
Since $Z$ and $T$ are $K$-split, this implies that $T'$ is as well. Hence the short exact sequence splits and so we have an exact sequence
\begin{equation*}
    1 \to \Hom_K(D_2, Z) \to \Hom_K(D_2, T') \to \Hom_K(D_2, T) \to 1.
\end{equation*}
Let $\nu' \in \Hom_K(D_2, T')$ be a lift of $\nu$.

We now claim that $p: G'(\A_K) \to G(\A_K)$ is surjective. Indeed this would be implied by the vanishing of $H^1(\Gal(\ov{K}/K), Z(\A_K))$. This set is isomorphic to $\bigoplus\limits_{v \in V_K} H^1(\Gal(\ov{K_v}/K_v), Z(K_v))$ and thus vanishes by Hilbert's theorem $90$.

Now, for each $\sigma \in \Gal(K/F)$, choose a lift $\dot{\sigma} \in \mc{E}_2(K/F)$. For each $\dot{\sigma} \in \mc{E}_2(K/F)$, we choose an element $x'_{\dot{\sigma}} \in G'(\A_K)$ lifting $x_{\dot{\sigma}}$. We claim that $\Int(x'_{\dot{\sigma}}) \circ \sigma(\nu')$ is independent of our choice of $\dot{\sigma}$ and $x'_{\dot{\sigma}}$. Indeed, if we pick a different lift $\dot{\sigma}'$, then we have $\dot{\sigma'}=\dot{\sigma}d$ for some $d \in D_2(\A_K)$. Then a lift of $x_{\dot{\sigma}'}$ is of the form $zx'_{\dot{\sigma}}\sigma(\nu')(d)$ for $z \in Z(\A_K)$, which implies our claim.

Since by definition $\Int(x_w) \circ \sigma(\nu) = \nu$ for each $w \in \mc{E}_2(K/F)$ projecting to $\sigma$, it follows that $\nu'$ and $\Int(x'_{\dot{\sigma}}) \circ \sigma(\nu')$ are two lifts of $\nu$. A priori, we have $\Int(x'_{\dot{\sigma}}) \circ \sigma(\nu') \in \prod\limits_v \Hom_{K_v}(D_2, T')$. But we claim that in fact, this element lies in $\Hom_K(D_2,T')$. If not, there would exist places $v_1, v_2$ and for $i=1,2$, elements $x_{\dot{\sigma}, i} \in G(K_{v_i})$ with lifts $x'_{\dot{\sigma}, i} \in G'(K_{v_i})$ such that $\Int(x_{\dot{\sigma},i}) \circ \sigma(\nu)= \nu$ but $\Int(x'_{\dot{\sigma},1}) \circ \sigma(\nu')=\nu'_1$ and $\Int(x'_{\dot{\sigma},2}) \circ \sigma(\nu')=\nu'_2$ are not equal to the images in $\Hom_{K_{v_1}}(D_2, T')$ and $\Hom_{K_{v_2}}(D_2, T')$ of the same element of $\Hom_K(D_2, T')$. We claim that we can choose $x'_1, x'_2 \in G'(K)$ such that $\Int(x'_i)(\sigma(\nu'))=\nu'_i$. Indeed, $\sigma(\nu')$ and $\nu'_i$ lie in $\Hom_{K_{v_i}}(D_2, S)$ for some $K$-split maximal torus $S$ of $G'$ and then the proof of \cite[Lemma 1.1.3.(a)]{Kot3} implies that for each $i=1,2$, we have $\sigma(\nu')$ and $\nu'_i$ are conjugate by some Weyl group element, and then our claim follows from the fact that the Weyl group of $S$ is a constant group scheme over $K$. Now consider ${x'}^{-1}_1x'_2 \in G'(K)$ and its projection to $x^{-1}_1x_2 \in G(K)$. We have that $x^{-1}_1x_2$ centralizes $T$ by assumption, and therefore that ${x'}^{-1}_1x'_2$ normalizes but does not centralize $T'$. But now consider the action of ${x'}^{-1}_1x'_2$ on $T'$ by conjugation. It acts trivially $Z$ and the induced action on $T$ is also trivial. We have already observed that we have a splitting $T' \cong Z \oplus T$. This makes it clear that the action of ${x'}^{-1}_1x'_2$ on $T'$ is by a unipotent matrix. But $N_{G'}(T')/Z_{G'}(T')$ is finite so some power of ${x'}^{-1}_1x'_2$ acts trivially on $T'$. Since ${x'}^{-1}_1x'_2$ is unipotent, this implies that $\Int({x'}^{-1}_1x'_2) \in Z_{G'}(T')$, contrary to assumption. Hence we have proven that $\Int(x'_{\dot{\sigma}}) \circ \sigma(\nu') \in \Hom_K(D_2, T')$.

Hence, there exists a $\lambda_{\sigma} \in \Hom_K(D_2, Z)$ such that $\Int(x'_{\dot{\sigma}}) \circ \sigma(\nu') = \nu' + \lambda_{\sigma}$. We claim that $\sigma \mapsto \lambda_{\sigma} \in Z^1(\Gal(K/F), \Hom_K(D_2, Z))$. Indeed, since for $\sigma_1, \sigma_2 \in \Gal(K/F)$ we have that $x'_{\dot{\sigma_1}} \sigma_1(x'_{\dot{\sigma_2}})$ is a lift of $x_{\sigma_1\sigma_2}$, we have
\begin{align*}
\nu' + \lambda_{\sigma_1\sigma_2} &= \Int(x'_{\dot{\sigma_1}} \sigma_1(x'_{\dot{\sigma_2}})) \circ \sigma_1\sigma_2(\nu')\\
&= \Int(x'_{\dot{\sigma_1}}) \circ \sigma_1(\Int(x'_{\dot{\sigma_2}}) \circ \sigma_2(\nu')) \\
&= \Int(x'_{\dot{\sigma_1}}) \circ ( \sigma_1(\nu') + \sigma_1(\lambda_{\sigma_2}))\\
&= \nu' + \lambda_{\sigma_1} + \sigma_1(\lambda_{\sigma_2}).
\end{align*}

In order to trivialize this cohomology class, we need to enlarge $G'$. Define $Z'' := \Res_{K/F} Z_K$ and push out $G'$ along $Z \hookrightarrow Z''$ to get a diagram
\begin{equation*}
    \begin{tikzcd}
    1 \arrow[r] & Z \arrow[r, "i"] \arrow[d] & G' \arrow[r, "p"] \arrow[d] & G \arrow[r] \arrow[d, equal] & 1\\
    1 \arrow[r] & Z'' \arrow[r, "j"] & G'' \arrow[r, "q"] & G \arrow[r] & 1.
    \end{tikzcd}
\end{equation*}
Then the $\Gal(K/F)$-module $\Hom_k(D_2, Z'') \cong X^*(D_2) \otimes X_*(Z'')$ is coinduced from the $\Z$-module $X^*(D_2) \otimes X_*(Z)$ and hence $H^1(\Gal(K/F), \Hom_K(D_2, Z''))$ vanishes. It follows that there exists a $\mu \in \Hom_K(D_2, Z'')$ such that $\lambda_{\sigma} = \sigma(\mu) - \mu$. Then $\nu'' := \nu' -\mu$ is a lift of $\nu$ to $\nu''$ such that 
\begin{equation*}
\Int(x'_{\dot{\sigma}})\circ \sigma(\nu'')=\nu'',    
\end{equation*}
for all $\sigma \in \Gal(K/F)$.

Now define a $1$-cochain of $\mc{E}_2(K/F)$ valued in $G''(\A_K)$ by $x''_{d \dot{\sigma}} := \nu''(d)x'_{\dot{\sigma}}$. One can easily check that $(w_1, w_2) \mapsto z_{w_1, w_2}$ defined by $x''_{w_1w_2}= z_{w_1, w_2}x''_{w_1} w_1(x''_{w_2})$ is a $2$-cocycle of $\mc{E}_2(K/F)$ valued in $Z''(\A_K)$. We see that changing $w_1$ and $w_2$ by elements of $D_2(\A_K)$ does not change the value of $z_{w_1, w_2}$ and hence that $z_{w_1, w_2}$ is inflated from a $2$-cocycle of $\Gal(K/F)$. But
\begin{equation*}
    H^2(\Gal(K/F), Z''(\A_K))=\bigoplus\limits_{u \in V_F} H^2(\Gal(K_v/F_u), Z''(K_v))
\end{equation*}
which is trivial by Shapiro's lemma. Hence there is a function $y: \Gal(K/F) \to Z''(\A_K)$ such that $(\sigma_1, \sigma_2) \mapsto y^{-1}_{\sigma_1\sigma_2}\sigma_1(y_{\sigma_2})y_{\sigma_1}$ is a $2$-coboundary equal to $z$. We can then pullback $y$ to $\mc{E}_2(K/F)$ and define $x'''_w := y_w x''_w$. It is easy to check that $(\nu'', x''')$ is an algebraic $1$-cocycle of $\mc{E}_2(K/F)$ valued in $G''(\A_K)$ and a lift of $(\nu, x)$. We have now shown that $H^1_{\alg}(\mc{E}_2(K/F), G''(\A_K)) \to H^1_{\alg}(\mc{E}_2(K/F), G(\A_K))$ is surjective.

Our goal is to use this to show the surjectivity: $H^1_{\alg}(\mc{E}_2(K/F), G'(\A_K)) \to H^1_{\alg}(\mc{E}_2(K/F), G(\A_K))$. We first define $C := Z''/Z = G''/G$. The commutative diagram
\begin{equation*}
    \begin{tikzcd}
    1 \arrow[r] & Z \arrow[r, "i"] \arrow[d] & G' \arrow[r, "p"] \arrow[d] & G \arrow[r] \arrow[d, equal] & 1\\
    1 \arrow[r] & Z'' \arrow[r, "j"] \arrow[d] & G'' \arrow[r, "q"] \arrow[d] & G \arrow[r] & 1.\\
    & C \arrow[r, equal] & C & &
    \end{tikzcd}
\end{equation*}
induces a diagram
\begin{equation*}
    \begin{tikzcd}
    H^1_{\alg}(\mc{E}_2(K/F), Z(\A_K)) \arrow[r, "i"] \arrow[d] & H^1_{\alg}(\mc{E}_2(K/F), G'(\A_K)) \arrow[r, "p"] \arrow[d] & H^1_{\alg}(\mc{E}_2(K/F), G(\A_K)) \arrow[d, equal] \\
    H^1_{\alg}(\mc{E}_2(K/F), Z''(\A_K)) \arrow[r, "j"] \arrow[d] & H^1_{\alg}(\mc{E}_2(K/F), G''(\A_K)) \arrow[r, "q"] \arrow[d] & H^1_{\alg}(\mc{E}_2(K/F), G(\A_K)) \\
    H^1_{\alg}(\mc{E}_2(K/F), C(\A_K)) \arrow[r, equal] & H^1_{\alg}(\mc{E}_2(K/F), C(\A_K)). & 
    \end{tikzcd}
\end{equation*}
We claim the sequence
\begin{equation*}
    H^1_{\alg}(\mc{E}_2(K/F), G'(\A_K)) \to H^1_{\alg}(\mc{E}_2(K/F), G''(\A_K)) \to H^1_{\alg}(\mc{E}_2(K/F), C(\A_K))
\end{equation*}
is exact. Indeed, by Lemma \ref{middleexact}, it suffices to show that $G''(\A_K) \to C(\A_K)$ is surjective. This follows from the exactness of 
\begin{equation*}
    1 \to Z \to Z'' \to C  \to 1
\end{equation*}
and Hilbert's Theorem 90 applied to $Z$.

So far we have shown that starting with a $b \in H^1_{\alg}(\mc{E}_2(K/F), G(\A_K))$, we can find a $b'' \in H^1_{\alg}(\mc{E}_2(K/F), G''(\A_K))$ such that $q(b)=b''$. Now by Lemma \ref{surjtori}, the map $H^1_{\alg}(\mc{E}_2(K/F), Z''(\A_K)) \to H^1_{\alg}(\mc{E}_2(K/F), C(\A_K))$ is surjective and so we can find $b_2 \in H^1_{\alg}(\mc{E}_2(K/F), Z''(\A_K))$ such that the projections of $b_2$ and $b''$ to $H^1_{\alg}(\mc{E}_2(K/F), C(\A_K))$ are equal. Then $b^{-1}_2b'' \in H^1_{\alg}(\mc{E}_2(K/F), G''(\A_K))$ and projects to the trivial element of $H^1_{\alg}(\mc{E}_2(K/F), C(\A_K))$. Hence by exactness, there exists $b' \in H^1_{\alg}(\mc{E}_2(K/F), G'(\A_K))$ so that the image of $b'$ in $H^1_{\alg}(\mc{E}_2(K/F), G''(\A_K))$ equals $b^{-1}_2b''$. By the commutativity of the diagram, we have $p(b')=b$ since $q(b^{-1}_2b'')=b$ by Lemma \ref{fiberact}. We have now proven the desired surjectivity.

To prove the last statement, we apply Lemma \ref{fiberact} using the fact that $p: G'(\A_K) \to G(\A_K)$ is surjective by Hilbert's Theorem $90$ applied to $Z$.

We have now proven the statements for $\mc{E}_2(K/F)$. The argument for $\mc{E}_1(K/F)$ is highly analogous but we comment on the differences.
\begin{enumerate}
    \item In the second paragraph when we show $p: G'(\A_K) \to G(\A_K)$ is surjective, we instead need to show that \begin{equation*}
        Z(\A_K)/Z(K) \to G'(\A_K)/Z_{G'}(K) \to G(\A_K)/Z_G(K) \to 1
    \end{equation*}
    is exact. The surjectivity follows from the surjectivity of $G'(\A_K)  \to G(\A_K)$ and the middle-exactness follows from the vanishing of $H^1(\Gal(\ov{K}/K), Z(\ov{K}))$ as in Lemma \ref{fiberact}.
    \item In the third paragraph we use the exactness of the above sequence to prove that all lifts of $x_{\sigma'}$ are of the form $z\nu(d)x'_{\dot{\sigma}}$ for $z \in Z(\A_K)/Z(K)$ and $d \in D_1(\A_K)/D_1(K)$.
    \item Most of the fourth paragraph is unnecessary since we only work with basic cocycles.
    \item In the $7$th paragraph, we define a $1$-cochain of $\mc{E}_1(K/F)$ valued in $G''(\A_K)/Z_{G''}(K)$ by $x''_{d\dot{\sigma}} := \nu''(d)x'_{\dot{\sigma}}$ and then need to define a $2$-cocycle $(w_1, w_2) \mapsto z_{w_1, w_2} \in Z''(\A_K)/Z(K)$ by $x''_{w_1, w_2} = z_{w_1, w_2} x''_{w_1}w_1(x''_{w_2})$. For this to make sense we need the exactness of 
    \begin{equation*}
        Z''(\A_K)/Z''(K) \to G''(\A_K)/Z_{G''}(K) \to G(\A_K)/Z_G(K).
    \end{equation*}
    The follows as in Lemma \ref{fiberact} from the vanishing of $H^1(\Gal(\ov{K}/K), Z''(\ov{K}))$.
    \item In paragraph $7$ we also need the vanishing of $H^2(\Gal(K/F), Z''(\A_K)/Z''(K))$. This follows from the exact sequence
    \begin{equation*}
        1 \to Z''(K) \to Z''(\A_K) \to Z''(\A_K)/Z(K) \to 1
    \end{equation*}
    and the fact that the other groups in the sequence have vanishing cohomology.
    \item In the $8$th paragraph, to get the desired diagram on cohomology, we need to show we have a diagram
    \begin{equation*}
    \begin{tikzcd}
    Z(\A_K)/Z(K) \arrow[r, "i"] \arrow[d] & G'(\A_K)/Z_{G'}(K) \arrow[r, "p"] \arrow[d] & G(\A_K)/Z_G(K) \arrow[d, equal] \\
    Z''(\A_K)/Z''(K) \arrow[r, "j"] \arrow[d] & G''(\A_K)/Z_{G''}(K) \arrow[r, "q"] \arrow[d] & G(\A_K)/Z_G(K) \\
    C(\A_K)/C(K) \arrow[r, equal] & C(\A_K)/C(K). & 
    \end{tikzcd}
\end{equation*}
All the maps are already known to exist except $G'(\A_K)/Z_{G'}(K) \to G''(\A_K)/Z_{G''}(K)$. This one exists because by construction the map $G' \to G''$ induces a map $Z_{G'} \to Z_{G''}$.
\item In the $8$th paragraph, in order to apply Lemma \ref{middleexact} to prove that
\begin{equation*}
    H^1_{\bas}(\mc{E}_1(K/F), G'(\A_K)/Z_{G'}(K)) \to H^1_{\bas}(\mc{E}_1(K/F), G''(\A_K)/Z_{G''}(K)) \to H^1_{\bas}(\mc{E}_1(K/F), C(\A_K)/C(K)
\end{equation*}
is exact, we need to show that 
\begin{equation*}
    G'(\A_K)/Z_{G'}(K) \to G''(\A_K)/Z_{G''}(K) \to C(\A_K)/C(K) \to 1
\end{equation*}
is exact. Surjectivity follows from the surjectivity of $G''(\A_K) \to C(\A_K)$. As in Lemma \ref{fiberact}, we can show middle-exactness by proving that $Z_{G''}(K)$ surjects onto $C(K)$. This follows because $Z''(K) \subset Z_{G''}(K)$ and $Z''(K)$ surjects onto $C(K)$ because $H^1(\Gal(\ov{K}/K), Z(\ov{K})) =1$.
\item In the final paragraph, we apply Lemma \ref{fiberact} using that $H^1(\Gal(\ov{K}/K), Z(\ov{K}))$ vanishes.
\end{enumerate}
\end{proof}
We are now ready to construct the diagram in the previous section for connected reductive $G$ over $F$ and split by a finite Galois extension $K$.  Choose a $z$-extension $G'$ of $G$. In particular we have a short exact sequence of $F$-groups
\begin{equation*}
    1 \to Z \to G' \to G \to 1
\end{equation*}
such that $Z$ is central in $G'$, the torus $Z$ is obtained by Weil-restriction from a split $K$-torus, and $G'_{\der}$ is simply connected.

Now, by Proposition \ref{surjprop}, we have surjections
\begin{equation*}
    p: H^1_{\alg}(\mc{E}_2(K/F), G'(\A_K)) \to H^1_{\alg}(\mc{E}_2(K/F), G(\A_K))
\end{equation*}
and 
\begin{equation*}
    p: H^1_{\bas}(\mc{E}_1(K/F), G'(\A_K)/Z_{G'}(K)) \to H^1_{\bas}(\mc{E}_1(K/F), G(\A_K)/Z_G(K))
\end{equation*}
that induce bijections:
\begin{equation*}
    H^1_{\alg}(\mc{E}_2(K/F), G(\A_K)) \cong H^1_{\alg}(\mc{E}_2(K/F), G'(\A_K)) / H^1_{\alg}(\mc{E}_2(K/F), Z(\A_K)),
\end{equation*}
and
\begin{equation*}
    H^1_{\bas}(\mc{E}_1(K/F), G(\A_K)/Z_G(K)) \cong  H^1_{\bas}(\mc{E}_1(K/F), G'(\A_K)/Z_{G'}(K)) / H^1_{\bas}(\mc{E}_1(K/F), Z(\A_K)/Z(K)).
\end{equation*}
We claim that the natural maps
\begin{equation*}
    p: \left[ \bigoplus\limits_{v \in V_K} X^*(Z(\widehat{G'}))\right]_{\Gal(K/F)} \to \left[ \bigoplus\limits_{v \in V_K} X^*(Z(\widehat{G}))\right]_{\Gal(K/F)}
\end{equation*}
and
\begin{equation*}
    p: X^*(Z(\widehat{G'}))_{\Gal(K/F)} \to X^*(Z(\widehat{G}))_{\Gal(K/F)}
\end{equation*}
induce bijections 
\begin{equation*}
    \left[ \bigoplus\limits_{v \in V_K} X^*(Z(\widehat{G}))\right]_{\Gal(K/F)} \cong \left[ \bigoplus\limits_{v \in V_K} X^*(Z(\widehat{G'}))\right]_{\Gal(K/F)} / \left[ \bigoplus\limits_{v \in V_K} X^*(\widehat{Z})\right]_{\Gal(K/F)},
\end{equation*}
and 
\begin{equation*}
    X^*(Z(\widehat{G}))_{\Gal(K/F)} \cong X^*(Z(\widehat{G'}))_{\Gal(K/F)} / X^*(\widehat{Z})_{\Gal(K/F)}.
\end{equation*}
Indeed, this follows from the exact sequence
\begin{equation*}
    0 \to \Lambda_Z \to \Lambda_{G'} \to \Lambda_G \to 0
\end{equation*}
where $\Lambda_G$ is Borovoi's fundamental group ( recall $\Lambda_G \cong X^*(Z(\widehat{G}))$ \,) and the fact that tensor product and co-invariants functors are right-exact.

Finally, we remark that by construction, the maps
\begin{equation*}
    H^1_{\alg}(\mc{E}_2(K/F), G'(\A_K)) \to \left[ \bigoplus\limits_{v \in V_K} X^*(Z(\widehat{G'}))\right]_{\Gal(K/F)}
\end{equation*}
and
\begin{equation*}
    H^1_{\bas}(\mc{E}_1(K/F), G'(\A_K)/Z_{G'}(K)) \to X^*(Z(\widehat{G'}))
\end{equation*}
constructed in the previous section are equivariant with respect to the actions of 
\begin{equation*}
    H^1_{\alg}(\mc{E}_2(K/F), Z(\A_K))  \cong \left[ \bigoplus\limits_{v \in V_K} X^*(\widehat{Z})\right]_{\Gal(K/F)},
\end{equation*}
and
\begin{equation*}
      H^1_{\bas}(\mc{E}_1(K/F), Z(\A_K)/Z(K))  \cong X^*(\widehat{Z})_{\Gal(K/F)},
\end{equation*}
respectively.

Together, these facts give us unique maps
\begin{equation*}
    H^1_{\alg}(\mc{E}_2(K/F), G(\A_K)) \to \left[ \bigoplus\limits_{v \in V_K} X^*(Z(\widehat{G}))\right]_{\Gal(K/F)}
\end{equation*}
and
\begin{equation*}
    H^1_{\bas}(\mc{E}_1(K/F), G(\A_K)/Z_{G}(K)) \to X^*(Z(\widehat{G}))
\end{equation*}
making the following diagrams commute:
\begin{equation*}
 \begin{tikzcd}
 H^1_{\alg}(\mc{E}_2(K/F) , G'(\A_K)) \arrow[r, "p"] \arrow[d] & H^1_{\alg}(\mc{E}_2(K/F), G(\A_K)) \arrow[d]\\
 \left[ \bigoplus\limits_{v \in V_K} X^*(Z(\widehat{G'}))\right]_{\Gal(K/F)} \arrow[r, "p"] & \left[ \bigoplus\limits_{v \in V_K} X^*(Z(\widehat{G}))\right]_{\Gal(K/F)}
 \end{tikzcd}   
\end{equation*}
and 
\begin{equation*}
 \begin{tikzcd}
 H^1_{\bas}(\mc{E}_1(K/F) , G'(\A_K)/Z_{G'}(K)) \arrow[r, "p"] \arrow[d] & H^1_{\bas}(\mc{E}_1(K/F), G(\A_K)/Z_G(K)) \arrow[d]\\
 X^*(Z(\widehat{G'}))_{\Gal(K/F)} \arrow[r, "p"] & X^*(Z(\widehat{G}))_{\Gal(K/F)}.
 \end{tikzcd}   
\end{equation*}
The maps we have constructed do not depend on our choice of $z$-extension. This follows from \cite[Lemma 2.4.4]{Kot5} where Kottwitz shows that if we have a map of reductive groups $f: G_1 \to G_2$ and $z$-extensions $H_i$ of $G_i$ for $i=1,2$, then $H_3 := H_2 \times_{G_2} H_1$ is a $z$-extension of $G_1$ and we have a commutative diagram:
\begin{equation*}
    \begin{tikzcd}
    H_1 \arrow[d, "\pi_1"]   & H_3 \arrow[r, "\tilde{f}"] \arrow[l] \arrow[d, "\pi_3"] & H_2 \arrow[d, "\pi_2"] \\
    G_1 \arrow[r, equals] & G_1 \arrow[r, "f"] & G_2.
    \end{tikzcd}
\end{equation*}
In particular, to prove our maps do not depend on choice of $z$-extension, we let $G_1=G_2$ and let the map $f$ be the identity. Note in that in the $\mc{E}_1$ case, we also need that the maps $H_3 \to H_1$ and $H_3 \to H_2$ are both surjections and hence induce maps $Z_{H_3} \to Z_{H_1}$ and $Z_{H_3} \to Z_{H_2}$. 

We can also use this lemma to prove that the map we have constructed for $\mc{E}_2$ is functorial for connected reductive $G$ and the map for $\mc{E}_1$ is functorial for connected reductive $G$ and maps $G_1 \to G_2$ that induce a map $Z_{G_1} \to Z_{G_2}$. To prove this last functoriality, we need that if $f: G_1 \to G_2$ induces a map $Z_{G_1} \to Z_{G_2}$ then $\tilde{f}: H_3 \to H_2$ induces a map $Z_{H_3} \to Z_{H_2}$. To see this, pick $z \in Z_{H_3}$. Then $f(\pi_3(z)) \in Z_{G_2}$ since both $f$ and $\pi_3$ induce maps of centers. Then $\tilde{f}(z) \in \pi^{-1}_2(Z_{G_2})=Z_{H_2}$ since $H_2$ is a central extension of $G_2$.

We have now constructed all the maps in Diagram \ref{keycommdiagram} in the general case. It remains to show the diagram commutes. Let $1 \to Z \to G' \to G \to 1$ be a $z$-extension of $G$. Then we can form Diagram \ref{keycommdiagram} for $G'$ and for $Z$. We get a map from the diagram of $G'$ to that of $G$ by functoriality and all the maps between these diagrams are surjective since they are given as quotients by the analogous objects in the diagram for $Z$.  Since $G'_{\der}$ is simply connected, we have proven in the previous section that the diagram for $G'$ is commutative. Moreover, all the squares between the diagram for $G'$ and the diagram for $G$ commute by a combination of the functoriality we proved in the previous paragraph, the functoriality of the Kottwitz map, and the compatibility of localization with the $\Phi$ construction. It follows by a simple diagram chase that the diagram for $G$ must also be commutative.

\subsection{Inflation}
In this section, we recall the results of \cite[\S 8]{Kot9} which allow us to obtain inflation maps that are compatible with localization. In particular, we can define $H^1_{\alg}(\mc{E}_2, G(\ov{\A_F})) = \varinjlim H^1_{\alg}(\mc{E}_2(K/F), G(\A_K))$ and analogously for the other cohomology sets. We can then promote Diagram \ref{keycommdiagram} to a commutative diagram

\begin{equation}{\label{limkeycommdiagram}}
\begin{tikzcd}
\bigoplus\limits_{u \in V_F} H^1_{\bas}(\mc{E}_{\iso}, G(\ov{F_u})) \arrow[d] & H^1_{\bas}(\mc{E}_2, G(\ov{\A_F}))  \arrow[l, "l^F", swap] \arrow[r] \arrow[d] & H^1_{\bas}(\mc{E}_1, G(\ov{\A}_F)/Z_G(\ov{F})) \arrow[d] \\
   \bigoplus\limits_{u \in V_F} X^*(Z(\widehat{G}))_{\Gamma_{F_u}} & X_2(G) \arrow[r, "\Sigma"] \arrow[l, "\sim"]& X^*(Z(\widehat{G}))_{\Gamma_F} 
\end{tikzcd}    
\end{equation}
where $X_2(G)= \varinjlim\limits_{K} (X^*(Z(\widehat{G})) \otimes \Z[V_K])_{\Gal(K/F)}$.

We first recall the localization maps for the gerb $\mc{E}_{\iso}$. Suppose $K/F$ and $L/F$ are finite Galois extensions of local fields and that $K \subset L$. Then we have the following diagram of extensions
\begin{equation*}
\begin{tikzcd}
1 \arrow[r] & \Gm(K) \arrow[d, equal] \arrow[r] & \mc{E}_{\iso}(K/F) \arrow[r] & \Gal(K/F) \arrow[r] & 1\\
1 \arrow[r] & \Gm(K) \arrow[r] \arrow[d] &  \mc{E}'_{\iso}(K/F) \arrow[u] \arrow[r] \arrow[d] & \Gal(L/F) \arrow[r] \arrow[d, equal] \arrow[u, "\rho"]& 1 \\
1 \arrow[r] & \Gm(L) \arrow[r] & \mc{E}^{\inf}_{\iso}(K/F) \arrow[r]  &  \Gal(L/F) \arrow[r] \arrow[d, equal] & 1 \\
1 \arrow[r] & \Gm(L) \arrow[u, "p_{L/K}"] \arrow[r] & \mc{E}_{\iso}(L/F) \arrow[u, "\eta_{L/K}"]\arrow[r] & \Gal(L/F) \arrow[r] & 1
\end{tikzcd}
\end{equation*}
The gerb $\mc{E}'_{\iso}(K/F)$ is defined to be the fiber product $\mc{E}_{\iso}(K/F) \times_{\Gal(K/F)} \Gal(L/F)$ via the natural projection $\rho: \Gal(L/F) \to \Gal(K/F)$. We define $\mc{E}^{\inf}_{\iso}(K/F)$ as the pushout of $\mc{E}'_{\iso}(K/F)$ along the natural inclusion $\Gm(K) \hookrightarrow \Gm(L)$. Finally, the map $p_{L/K}$ is given by $x \mapsto x^{[L:K]}$ and Kottwitz show it induces a map $\eta_{L/K}$ as in  the above diagram. To each extension in the above diagram, we assign a set $M$ and a set $Y$ as in Definition \ref{cocycledef}. For the top extension, we let $\ov{Y_1}=Y_1=\Hom_K(\Gm, G)$ and $M_1=G(K)$. For the second extension we let $M_2=G(K)$ and $\ov{Y_2}=Y_2=\Hom_K(\Gm, G)$. We further set $M_3=M_4=G(L)$ and $\ov{Y_3}=Y_3=\ov{Y_4}=Y_4=\Hom_L(\Gm,G)$. We have a natural map $M_2 \to M_3$ given by inclusion and a map $\ov{Y_2} \to \ov{Y_3}$ given by base change. The map $Y_3 \to Y_4$ is pre-composition with $p_{L/K}$. Then we get the desired local inflation map
\begin{equation*}
    H^1_{\alg}(\mc{E}_{\iso}(K/F) , G(K)) \to H^1_{\alg}(\mc{E}_{\iso}(L/F), G(L))
\end{equation*}
as a composition of a change of $G$ map by $\rho$, a $\Phi$ map, and a $\Psi$ map.

We now consider global inflation. Suppose $K/F$ and $L/F$ are finite Galois extensions of global fields and that $K \subset L$. Recall the objects $A_i, X_i, D_i$ of Key Example \ref{keyexample}. Because we are changing the field extension, we use the notations $A_{i,K}, X_{i,K}, D_{i,K}$ instead. Then we have the following diagram of extensions
\begin{equation*}
\begin{tikzcd}
1 \arrow[r] & \Hom(X_{i,K}, A_{i,K}) \arrow[d, equal] \arrow[r] & \mc{E}_i(K/F) \arrow[r] & \Gal(K/F) \arrow[r] & 1\\
1 \arrow[r] & \Hom(X_{i,K}, A_{i,K}) \arrow[r] \arrow[d] &  \mc{E}'_i(K/F) \arrow[u] \arrow[r] \arrow[d] & \Gal(L/F) \arrow[r] \arrow[d, equal] \arrow[u, "\rho"]& 1 \\
1 \arrow[r] & \Hom(X_{i,K}, A_{i,L}) \arrow[r] & \mc{E}^{\inf}_i(K/F) \arrow[r]  &  \Gal(L/F) \arrow[r] \arrow[d, equal] & 1 \\
1 \arrow[r] & \Hom(X_{i,L}, A_{i,L})  \arrow[u, "p_i"] \arrow[r] & \mc{E}_i(L/F) \arrow[u, "\tilde{p_i}"]\arrow[r] & \Gal(L/F) \arrow[r] & 1
\end{tikzcd}
\end{equation*}
The gerb $\mc{E}'_i(K/F)$ is defined via pullback along $\rho$. The map $\Hom(X_{i,K}, A_{i,K}) \to \Hom(X_{i,K}, A_{i,L})$ is given by post-composition with the obvious isomorphism $A_{i,K} = A_{i,L}^{\Gal(L/K)}$. Then the gerb $\mc{E}^{\inf}_i(K/F)$ is defined via pushout along this map. The map $p_i: \Hom(X_{i,L}, A_{i,L}) \to \Hom(X_{i,K}, A_{i,L})$ is defined via pre-composition with maps $p_i: X_{i,K} \to X_{i,L}$. We have $X_{1,K}=\Z=X_{1,L}$ and we define $p_1$ to be multiplication by $[L:K]$. We recall that $X_{2,K}=\Z[V_K]$ and hence we define $p_2: X_{2,K} \to X_{2,L}$ by $p_2(v)= \sum\limits_{w \mid v} [L_w:K_v]w$. The map $p_3$ is defined via restriction of $p_2$. Then Kottwitz shows (\cite[Lemma 8.3]{Kot9}) that $\tilde{p_i}$ exists and is unique up to conjugation by $\Hom(X_{i,K}, A_{i,L})$.

We now define sets $M_{i,j}$ and $\ov{Y_{i,j}}$ and $Y_{i,j}$ where $i=1,2,3$ corresponds with the index $i$ in $\mc{E}_i(K/F)$ and $j=1,2,3,4$ indicates which extension in the above diagram we consider. For $M_{i,j}$ we have:
\begin{equation*}
    \begin{array}{|c|c|c|c|}
    \hline
       G(\A_K)/Z_G(K)  & G(\A_K)/Z_G(K) & G(\A_L)/Z_G(L) & G(\A_L)/Z_G(L) \\
       \hline
        G(\A_K) & G(\A_K) & G(\A_L) & G(\A_L)\\
        \hline
        G(K) & G(K) & G(L) & G(L)\\
        \hline
    \end{array}
\end{equation*}
For $Y_{i,j}$ we have:
\begin{equation*}
    \begin{array}{|c|c|c|c|}
    \hline
       \Hom_K(D_{1,K}, Z_G)  & \Hom_K(D_{1,K}, Z_G) & \Hom_K(D_{1,K}, Z_G) & \Hom_L(D_{1,L}, Z_G) \\
       \hline
        \Hom_K(D_{2,K},G) & \Hom_K(D_{2,K},G) & \Hom_K(D_{2,K},G) & \Hom_L(D_{2,L}, G)\\
        \hline
        \Hom_K(D_{3,K}, G) & \Hom_K(D_{3,K}, G) & \Hom_K(D_{3,K}, G) & \Hom_L(D_{3,L}, G)\\
        \hline
    \end{array}
\end{equation*}
Finally $\ov{Y_{i,j}}$ is:
\begin{equation*}
    \begin{array}{|c|c|c|c|}
    \hline
       \Hom_K(D_1, Z_G)  & \Hom_K(D_1, Z_G) & \Hom_K(D_1, Z_G) & \Hom_L(D_1, Z_G) \\
       \hline
        G(\A_K) \cdot \Hom_K(D_2,G) & G(\A_K) \cdot \Hom_K(D_2,G) & G(\A_K) \cdot \Hom_K(D_2,G) & G(\A_L) \cdot \Hom_L(D_2, G)\\
        \hline
        \Hom_K(D_3, G) & \Hom_K(D_3, G) & \Hom_K(D_3, G) & \Hom_L(D_3, G)\\
        \hline
    \end{array}
\end{equation*}
In all cases, the maps $\xi_{i,j}$ are obvious. Finally, the map $\ov{Y_{i,3}} \to \ov{Y_{i,4}}$ is given by pre-composition with $p_i: D_{i,K} \to D_{i,L}$.

Then the desired inflation maps
\begin{align*}
     H^1_{\bas}(\mc{E}_1(K/F) , G(\A_K)/Z_G(K)) & \to H^1_{\bas}(\mc{E}_1(L/F), G(\A_L)/Z_G(L))\\
     H^1_{\alg}(\mc{E}_2(K/F) , G(\A_K)) & \to H^1_{\alg}(\mc{E}_2(L/F), G(\A_L))\\
     H^1_{\alg}(\mc{E}_3(K/F) , G(K)) & \to H^1_{\alg}(\mc{E}_3(L/F), G(L))
\end{align*}
are given as a composition of a change of $G$ map, a $\Phi$ map and a $\Psi$ map. Note that one can also define inflation for basic sets when $i=2,3$ in analogy with the case when $i=1$.

One must also check that inflation is compatible with localization when $i=2$. Note that since the inflation map is a composition of change of $G$ and $\Phi$ and $\Psi$ maps, it is compatible with these forms of functoriality. The localization map is also a composition of such maps and so to prove functoriality it suffices to show that the local inflation map is the ``localization'' of the global one for $D_2$. This compatibility is stated for instance in \cite[\S 10.9]{Kot9}.

\section{Normalizing Transfer Factors}{\label{s: normalizingtransferfactors}}
In this section we define global transfer factors using the theory of global $\mb{B}(G)$ developed in \cite{Kot9} as well as our modest additions in the previous section. In this section we use the bold lettering to refer to algebraic groups defined over a number field.

Let $\mathbf{G}$ be a connected reductive group over a number field $F$. Fix a quasisplit inner form $\mathbf{G}^*$ of $\mathbf{G}$ and an inner twist $\Psi: \mathbf{G}^*_{\overline{F}} \to \mathbf{G}_{\overline{F}}$ (i.e. $\Psi$ is an isomorphism and $\Psi^{-1} \circ \sigma(\Psi)$ is inner for all $ \sigma \in \Gamma_F := \Gal(F^{\sep}/F)$). 

An extended pure inner twist $(\Psi_1, z^1)$ consists of an inner twist $\Psi_1: \mb{G}^*_{\ov{F}} \to \mb{G}_{\ov{F}}$ and a cocycle $z^1 \in Z^1_{\bas}(\mc{E}_3(K/F), \mb{G}^*(K))$ for some finite Galois extension $K/F$ such that the projection of $z^1$ to $Z^1(\Gamma_F, \mb{G}^*_{\ad}(\ov{F}))$ equals $\sigma \mapsto \Psi^{-1}_1 \circ \sigma(\Psi_1)$. An isomorphism of two extended pure inner twists $(\Psi_1, z^1)$ and $(\Psi_2, z^2)$ is a map $f: \mb{G_1} \to \mb{G_2}$ defined over $F$ and an element $g \in \mb{G}^*(F^{\sep})$ such that $\Psi^{-1}_2 \circ f \circ \Psi_1 = \Int(g)$ and $z^1_e=g^{-1} z^2_e \sigma_e(g)$ for all $e \in \mc{E}_3(K/F)$ projecting to $\sigma_e \in \Gal(K/F)$. One can easily check that if $(f, g)$ is an automorphism of the extended pure inner twist $(\Psi, z)$ then $f$ is given by $\Int(\Psi(g)): \mb{G} \to \mb{G}$ and $\Psi(g) \in \mb{G}(F)$ so that $f$ is given by conjugation by an element of $\mb{G}(F)$. The map $(\Psi_1, z^1) \mapsto z^1$ induces a bijection between isomorphism classes of extended pure inner twists and $H^1_{\bas}(\mc{E}_3, \mb{G}^*(\ov{F}))$. An analogous construction works locally with the gerb $\mc{E}_{\iso}$. 

We now impose the following assumptions on $\mathbf{G}$:

\begin{enumerate}
    \item $\Psi$ lifts to an extended pure inner twist (by \cite[Cor. 3.13.13]{KalTai} this is true if $\mb{G}$ has connected center).
    \item $\mathbf{G}$ satisfies the Hasse principle.
    \item $\mb{G}_{\der}$ is simply connected.
\end{enumerate}

\subsection{Defining Invariants}{\label{normtrans}}

To begin we need to describe the construction of some invariants. Fix a semisimple element $\gamma^* \in \mb{G}^*(F)$. Let $\gamma \in \mb{G}(\A_F)$ be stably conjugate to $\Psi(\gamma^*)$. We choose $g \in \mb{G}^*(\ov{\A_F})$ such that
\begin{equation}
    \Psi(g \gamma^* g^{-1})=\gamma.
\end{equation}

Denote the centralizer in $\mb{G}^*$ of $\gamma^*$ by $I^{\mb{G}^*}_{\gamma^*}$. Note that this is connected since we are assuming that $\mb{G}_{\der}$ is simply connected. In \cite[\S 6]{Kot6}, Kottwitz describes an obstruction, $\obs(\gamma) \in \mf{K}(I^{\mb{G}^*}_{\gamma^*})$, constructed by Langlands in \cite[CH VII]{Lan2} to the existence of an element of $\mb{G}(F)$ in the $\mb{G}(\A_F)$ conjugacy class of $\gamma$. The construction proceeds by first describing the obstruction in the case that $\mb{G}$ satisfies the Hasse principle and then reducing to that case by considering $\mb{G}_{\Sc}$. In our case we are actually assuming $\mb{G}$ satisfies the Hasse principle so this simplifies the discussion. We now describe the construction in the present case where $\mb{G}$ satisfies the Hasse principle. See also \cite[\S 4.1]{KalTai} whose exposition we follow somewhat closely, generalizing to the non-regular case.

Let $\sigma \mapsto u_{\sigma}$ be a set-theoretic lift of the cocycle $\sigma \mapsto \Psi^{-1} \circ \sigma(\Psi) \in Z^1(\Gamma_F, \mb{G}^*_{\ad}(\ov{F}))$ to $\mb{G}^*(\ov{F})$. Then we can easily check
\begin{equation}
    g^{-1}u_{\sigma}\sigma(g) \in I^{\mb{G}^*}_{\gamma^*}(\ov{\A}_F).
\end{equation}
Moreover, it is easy to see that the projection
\begin{equation}
    \sigma \mapsto g^{-1}u_{\sigma}\sigma(g) \in I^{\mb{G}^*}_{\gamma^*}(\ov{\A}_F)/ Z_{I^{\mb{G}^*}_{\gamma^*}}(\ov{F})
\end{equation}
is independent of the choice of our lift $u$ and gives a $1$-cocycle and hence a cohomology class in $H^1( \Gamma_F, I^{\mb{G}^*}_{\gamma^*}(\ov{\A}_F)/Z(I^{\mb{G}^*}_{\gamma^*})(\ov{F}))$.

Now, by \cite[Theorem 2.2]{Kot6} we have a map
\begin{equation}
 H^1( \Gamma_F, I^{\mb{G}^*}_{\gamma_0}(\ov{\A}_F)/Z(I^{\mb{G}^*}_{\gamma^*})(\ov{F})) \to \pi_0(Z(I^{\mb{G}^*}_{\gamma_0})^{\Gamma_F})^D.   
\end{equation}
We denote the image of the above cohomology class in $\pi_0(Z(I^{\mb{G}^*}_{\gamma^*})^{\Gamma_F})^D$ by $\obs (\gamma)$. It is easy to see that this class is independent of our choice of $g$. 

Given two semi-simple elements $\gamma, \gamma' \in \mb{G}(\A_F)$ that are conjugate in $\mb{G}(\ov{\A}_F)$, we find a $g \in \mb{G}(\ov{\A}_F)$ such that $g \gamma g^{-1}=\gamma'$. Then we define the invariant $\inv(\gamma', \gamma') \in H^1(\Gamma_F, I^{\mb{G}}_{\gamma}(\ov{\A}_F))$ given as the cohomology class corresponding to the cocycle $\sigma \mapsto g^{-1} \sigma(g)$. Here we are thinking of $I^{\mb{G}}_{\gamma}$ as a group defined over $\A_F$ but not necessarily $F$. It is easy to check that $\inv(\gamma, \gamma')$ is independent of $g$ and is trivial precisely when $\gamma$ and $\gamma'$ are conjugate in $\mb{G}(\A_F)$. Finally, observe that at each place $v$ of $F$ we can define a local invariant $\inv_v$ in the analogous way and that we have a natural isomorphism
\begin{equation}
    H^1(\Gamma_F, I^{\mb{G}}_{\gamma}(\ov{\A}_F)) = \bigoplus_v  H^1(\Gamma_F, I^{\mb{G}}_{\gamma}(\ov{F}_v)),
\end{equation}
(where the right-hand side is a direct sum of pointed sets). Indeed, for each cohomology class on the left there is a finite Galois extension $K/F$ so that the cocycle comes from some  $H^1(\Gal(K/F), I^{\mb{G}}_{\gamma}(\A_K))$. Each cocycle in $Z^1(\Gal(K/F), I^{\mb{G}}_{\gamma}(\A_K))$ has finite image hence factors through $\mc{I}^{\mb{G}}_{\gamma}(\mc{O}_{K_u})$ for all but finitely many places $u$ of $K$ where $\mc{I}^{\mb{G}}_{\gamma}$ is a suitable integral model of $I^{\mb{G}}_{\gamma}$ away from finitely many places. The set $H^1(\Gal(K_u/F_v), \mc{I}^{\mb{G}}_{\gamma}(\mc{O}_{K_u}))$ is then trivial at those $v$ such that $K_u/F_v$ is unramified by a standard application of Hensel's lemma and Lang's theorem.

We now prove
\begin{proposition}
The invariant $\obs(\gamma)$ satisfies the following properties
\begin{enumerate}
    \item The invariant $\obs(\gamma)$ depends only on the $\mb{G}(\A_F)$-conjugacy class of $\gamma$.\\
    \item If $\gamma, \gamma' \in \mb{G}(\A_F)$ are stably conjugate, then
    \begin{equation}
        \obs(\gamma')= \inv(\gamma, \gamma') \cdot \obs(\gamma),
    \end{equation}
    where the product on the right is given as in \cite[Lemma 2.4]{Kot6} \\
    \item The invariant $\obs(\gamma)$ is trivial if and only if the $\mb{G}(\A_F)$-conjugacy class of $\gamma$ contains an $F$-point. 
\end{enumerate}
\end{proposition}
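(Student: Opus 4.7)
The plan is to handle (1) and (2) together by a single cocycle computation tracking how the auxiliary element $g$ changes when we modify $\gamma$ within its $\mb{G}(\ov{\A_F})$-conjugacy class, and to deduce (3) from (1), the easy direction, and Kottwitz's duality theorem \cite[Theorem 2.2]{Kot6} applied to the connected reductive group $I^{\mb{G}^*}_{\gamma^*}$ (which is connected because $\mb{G}_{\der}$ is simply connected). The converse direction of (3) is where the Hasse principle hypothesis on $\mb{G}$ enters, and it is the only genuinely delicate point.

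First I would prove (1). Suppose $\gamma' = h\gamma h^{-1}$ with $h \in \mb{G}(\A_F)$ and set $g' := \Psi^{-1}(h) g$, so that $\Psi(g' \gamma^* g'^{-1}) = \gamma'$. Using $\sigma(\Psi^{-1}) = \Int(u(\sigma)^{-1}) \circ \Psi^{-1}$ together with the $\Gamma_F$-invariance of $h$, one finds $\sigma(\Psi^{-1}(h)) = u(\sigma)^{-1} \Psi^{-1}(h) u(\sigma)$, so
\begin{equation*}
g'^{-1} u(\sigma) \sigma(g') = g^{-1} \Psi^{-1}(h)^{-1} u(\sigma) \cdot u(\sigma)^{-1} \Psi^{-1}(h) u(\sigma) \sigma(g) = g^{-1} u(\sigma) \sigma(g),
\end{equation*}
so the cocycle representative is literally unchanged. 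For (2), take $h \in \mb{G}(\ov{\A_F})$ and redo the same substitution; the only difference is that now $\sigma(h) \ne h$, and the cancellation leaves behind the extra factor $\Psi^{-1}(h^{-1}\sigma(h))$. Since $h^{-1}\sigma(h) \in I^{\mb{G}}_\gamma(\ov{\A_F})$ represents $\inv(\gamma, \gamma')$, transporting it through the isomorphism $\Psi^{-1} \circ \Int(g^{-1}) : I^{\mb{G}}_\gamma \xrightarrow{\sim} I^{\mb{G}^*}_{\gamma^*}$ produces a cocycle representative for $\inv(\gamma, \gamma')$ viewed inside $I^{\mb{G}^*}_{\gamma^*}$, and the resulting product of cocycles is precisely the product of \cite[Lemma 2.4]{Kot6}.

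For (3), the forward direction is immediate: if $\gamma$ is $\mb{G}(\A_F)$-conjugate to some $F$-point, (1) lets us assume $\gamma \in \mb{G}(F)$; then $\gamma$ and $\gamma^*$ are stably conjugate over $F$, so we may choose $g \in \mb{G}^*(\ov{F})$, and the cocycle $\sigma \mapsto g^{-1}u(\sigma)\sigma(g)$ already takes values in $I^{\mb{G}^*}_{\gamma^*}(\ov{F})/Z(\ov{F})$; by \cite[Theorem 2.2]{Kot6} its image in $\pi_0(Z(I^{\mb{G}^*}_{\gamma^*})^{\Gamma_F})^D$ is trivial. The converse is the main obstacle and proceeds in two steps. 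First, assuming $\obs(\gamma) = 0$, Kottwitz's duality applied to $I^{\mb{G}^*}_{\gamma^*}$ (together with the Hasse principle for $\mb{G}$, which descends to the quotient of $I^{\mb{G}^*}_{\gamma^*}$ appearing here because $\mb{G}_{\der}$ is simply connected, following the reductions of \cite[\S 6]{Kot6}) identifies the triviality of $\obs(\gamma)$ with the statement that the class $[\sigma \mapsto g^{-1} u(\sigma) \sigma(g)]$ in $H^1(\Gamma_F, I^{\mb{G}^*}_{\gamma^*}(\ov{\A_F})/Z(\ov{F}))$ comes from a global cocycle valued in $I^{\mb{G}^*}_{\gamma^*}(\ov{F})/Z(\ov{F})$. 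Second, one modifies $g$ by a suitable element of $I^{\mb{G}^*}_{\gamma^*}(\ov{\A_F})$, lifted so as to be $F$-rational up to center, to produce some $g_0 \in \mb{G}^*(\ov{F})$ with the property that $\gamma_0 := \Psi(g_0 \gamma^* g_0^{-1})$ lies in $\mb{G}(F)$ and is $\mb{G}(\A_F)$-conjugate to $\gamma$. The expected obstacle here is the central twist in the coefficients $I^{\mb{G}^*}_{\gamma^*}(\ov{F})/Z(\ov{F})$: producing an honest $g_0$ requires lifting against this central quotient, and this is exactly the point where one must invoke that $I^{\mb{G}^*}_{\gamma^*}$ is connected and the Hasse principle is available.
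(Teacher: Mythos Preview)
Your treatment of parts (1) and (2) and of the forward implication in (3) is essentially the same as the paper's. The cocycle manipulation is identical (modulo a harmless slip: the transfer isomorphism is $\Int(g^{-1})\circ\Psi^{-1}$, not $\Psi^{-1}\circ\Int(g^{-1})$), and you correctly invoke \cite[Theorem 2.2]{Kot6} to see that a class coming from $I^{\mb{G}^*}_{\gamma^*}(\ov{F})/Z(\ov{F})$ dies in $\pi_0(Z(\widehat{I^{\mb{G}^*}_{\gamma^*}})^{\Gamma_F})^D$.

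The converse direction of (3), however, is muddled in a way that matters. First, the Hasse principle plays no role in the duality step: \cite[Theorem 2.2]{Kot6} already identifies the kernel of the map to $\pi_0(Z(\widehat{I^{\mb{G}^*}_{\gamma^*}})^{\Gamma_F})^D$ with the image of $H^1(\Gamma_F, I^{\mb{G}^*}_{\gamma^*}(\ov{F})/Z(\ov{F}))$, with no hypothesis on $\mb{G}$. Second, and more seriously, your ``second step'' cannot work as written: modifying the adelic element $g$ by something in $I^{\mb{G}^*}_{\gamma^*}(\ov{\A_F})$ will never produce a $g_0\in\mb{G}^*(\ov{F})$. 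What the modification actually achieves is only that the cocycle $\sigma\mapsto g^{-1}u(\sigma)\sigma(g)$ takes values in $I^{\mb{G}^*}_{\gamma^*}(\ov{F})$, while $g$ itself remains genuinely adelic. The missing step is this: once the cocycle is $\ov{F}$-valued, the element $\Psi(g)^{-1}\sigma(\Psi(g))=\Psi\bigl(g^{-1}u(\sigma)\sigma(g)u(\sigma)^{-1}\bigr)$ is a $1$-cocycle of $\Gamma_F$ in $\mb{G}(\ov{F})$ which is visibly a coboundary in $\mb{G}(\ov{\A_F})$. It is \emph{here} that the Hasse principle for $\mb{G}$ enters, yielding some $h\in\mb{G}(\ov{F})$ with $h^{-1}\sigma(h)=\Psi(g)^{-1}\sigma(\Psi(g))$; then $\Psi(g)h^{-1}\in\mb{G}(\A_F)$, and conjugating $\gamma$ by this element produces the desired $F$-point. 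Your plan has the right ingredients but applies the Hasse principle to the wrong group at the wrong moment, and the passage from adelic $g$ to global $g_0$ needs this intermediate $\mb{G}$-valued cocycle argument rather than a direct modification.
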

\begin{proof}
For the most part, the proofs in \cite[4.1.1, 4.1.2]{KalTai} go through unchanged.

For the first part, suppose that $\gamma' \in \mb{G}(\A_F)$ is conjugate to $\gamma$. Then pick some $x \in \mb{G}(\A_F)$ so that $x\gamma x^{-1}=\gamma'$. Then $\obs(\gamma')$ is given by the class of
\begin{equation*}
    \sigma \mapsto g^{-1} \psi^{-1}(x^{-1}) u_{\sigma}\sigma(\psi^{-1}(x))\sigma(g).
\end{equation*}
Then by definition of $u(\sigma)$,
\begin{equation*}
    \Int(u_{\sigma})(\sigma(\psi^{-1}(x)))=\psi^{-1}(\sigma(x)),
\end{equation*}
so that the above becomes
\begin{equation*}
    \sigma \mapsto g^{-1}\psi^{-1}(x^{-1}\sigma(x)) u_{\sigma}\sigma(g).
\end{equation*}
Finally, since $x \in \mb{G}(\A_F)$, we have $\sigma(x)=x$ so that the above equals $\obs(\gamma)$ as desired.

For the second part, we choose $x \in \mb{G}(\ov{\A}_F)$ such that $x\gamma x^{-1}= \gamma'$. Then  we get by a similar computation that $\obs(\gamma')$ is given by the class of
\begin{equation*}
    g^{-1} \psi^{-1}(x^{-1}\sigma(x))u_{\sigma}\sigma(g)=(\Int(g^{-1}) \circ \psi^{-1})(x^{-1}\sigma(x)) \cdot g^{-1}u_{\sigma}\sigma(g).
\end{equation*}
We then note that we are precisely in the situation of \cite[Lemma 2.4]{Kot6}.

Lastly, we show the third and most important part of the proposition. If the $\mb{G}(\A_F)$-conjugacy class of $\gamma$ has an $F$-point, then by the first part of the proposition, we can assume that $\gamma$ is that $F$-point. Then we can pick $g \in \mb{G}^*(\ov{F})$ which implies that $g^{-1}u_{\sigma} \sigma(g) \in I^{\mb{G}^*}_{\gamma^*}(\ov{F})$. Now we cite \cite[Theorem 2.2]{Kot6} where it is proven that the kernel of the map
\begin{equation*}
    H^1( \Gamma_F, I^{\mb{G}^*}_{\gamma^*}(\ov{\A}_F)/Z(I^{\mb{G}^*}_{\gamma^*})(\ov{F})) \mapsto \pi_0(Z(I^{\mb{G}^*}_{\gamma^*})^{\Gamma_F})^D,
\end{equation*}
is given by the image of the map
\begin{equation*}
    H^1( \Gamma_F, I^{\mb{G}^*}_{\gamma^*}(\ov{F})/Z(I^{\mb{G}^*}_{\gamma^*})(\ov{F})) \to H^1( \Gamma_F, I^{\mb{G}^*}_{\gamma^*}(\ov{\A}_F)/Z(I^{\mb{G}^*}_{\gamma^*})(\ov{F})).
\end{equation*}
This implies that $\obs(\gamma)$ is trivial.

Conversely, suppose that $\obs(\gamma)$ is trivial. Then by \cite[Theorem 2.2]{Kot6} we have that the class of $g^{-1}u_{\sigma}\sigma(g)$  in $ H^1( \Gamma_F, I^{\mb{G}^*}_{\gamma^*}(\ov{\A}_F)/Z(I^{\mb{G}^*}_{\gamma^*})(\ov{F}))$ lies in the image of the map
\begin{equation*}
    H^1( \Gamma_F, I^{\mb{G}^*}_{\gamma^*}(\ov{F})/Z(I^{\mb{G}^*}_{\gamma^*})(\ov{F})) \to H^1( \Gamma_F, I^{\mb{G}^*}_{\gamma^*}(\ov{\A}_F)/Z(I^{\mb{G}^*}_{\gamma^*})(\ov{F})).
\end{equation*}
in particular, this means we can pick an $x \in I^{\mb{G}^*}_{\gamma^*}(\ov{\A}_F)$ such that the image of $x^{-1}g^{-1}u_{\sigma}\sigma(g)\sigma(x) \in Z^1(\Gamma_F, I^{\mb{G}^*}_{\gamma^*}(\ov{\A}_F)/Z(I^{\mb{G}^*}_{\gamma^*})(\ov{F}))$ lies in $I^{\mb{G}^*}_{\gamma^*}(\ov{F})/Z(I^{\mb{G}^*}_{\gamma^*})(\ov{F})$. Hence for each $\sigma \in \Gamma_F$ we can find an element $c \in Z(I^{\mb{G}^*}_{\gamma^*})(\ov{F})$ such that 
\begin{equation*}
    cx^{-1}g^{-1}u_{\sigma}\sigma(g)\sigma(x)\in I^{\mb{G}^*}_{\gamma^*}(\ov{F}),
\end{equation*}
which implies that $x^{-1}g^{-1}u_{\sigma}\sigma(g)\sigma(x) \in I^{\mb{G}^*}_{\gamma^*}(\ov{F})$.

Therefore, we can assume without loss of generality that $g^{-1} u_{\sigma} \sigma(g) \in I^{\mb{G}^*}_{\gamma^*}(\ov{F})$ for all $\sigma$. Now observe that
\begin{equation*}
    \psi(g^{-1}u_{\sigma}\sigma(g)u(\sigma)^{-1})=\psi(g)^{-1} \sigma(\psi(g))
\end{equation*}
and hence
\begin{equation*}
   z:= \sigma \mapsto \psi(g^{-1}u_{\sigma}\sigma(g)u_{\sigma}^{-1}),
\end{equation*}
gives an element of $Z^1(\Gamma_F, \mb{G}(\ov{F}))$ whose image is cohomologically trivial in $Z^1(\Gamma_F, \mb{G}(\ov{\A}_F))$. Hence by the Hasse principle for $\mb{G}$, we have $z \in Z^1(\Gamma_F, \mb{G}(\ov{F}))$ is cohomologically trivial and so there equals $h \in \mb{G}^*(\ov{F})$ such that 
\begin{equation*}
    1=\psi(gh)^{-1} \sigma(\psi(gh))=\psi(g^{-1}u_{\sigma}\sigma(g)u_{\sigma}^{-1}).
\end{equation*}
Then $\psi(gh) \in \mb{G}(\A_F)$ so that
\begin{equation*}
    \gamma' := \psi(gh)^{-1}\gamma \psi(gh)
\end{equation*}
is in the $\mb{G}(\A_F)$-conjugacy class of $\gamma$. On the other hand, by definition
\begin{equation*}
    \psi(\gamma^*)=\psi(g)^{-1} \gamma \psi(g),
\end{equation*}
so that
\begin{equation*}
    \psi(h^{-1} \gamma^* h)=\gamma'
\end{equation*}
which implies $\gamma' \in \mb{G}(\ov{F})$ hence $\mb{G}(F)$ as desired.
\end{proof}

\subsection{Refined Invariants}{\label{sec:refinedinv}}

Using Kottwitz's theory of $\mb{B}(G)$ for local and global fields as well as our work in \S\ref{globalBGappendix}, we construct, in the case that $\mb{G}$ satisfies the assumptions of the previous subsection, a refinement of the invariant $\obs(\gamma)$. We loosely follow \cite[\S 4.1]{KalTai} and freely use the notation from Section \ref{globalBGappendix}.

Since we assume that $\Psi$ lifts to an extended pure inner twist, there exists a finite Galois extension $K$ of $F$ and a cocycle $(\nu, z^{\iso}) \in Z^1_{\bas}( \mc{E}_3(K/F), \mb{G}^*(K))$ that lifts the element $z \in Z^1(\Gamma_F, \mb{G}^*_{\ad}(\ov{F}))$ corresponding to $\Psi$. In particular, we have that
\begin{equation}
\Psi^{-1} \circ \sigma_e(\Psi)=\Int (\ov{z^{\iso}_{e}})
\end{equation}
where $\ov{g}$ denotes the projection $\mb{G}^*(\ov{F}) \to \mb{G}^*_{\ad}(\ov{F})$, for any $e \in \mc{E}_3$ that projects to $\sigma_e \in \Gamma_F$.

We now pick semisimple $\gamma^* \in \mb{G}^*(F)$ and $\gamma \in \mb{G}(\A_F)$ conjugate to $\gamma^*$ in $\mb{G}(\ov{\A_F})$ and choose a $g \in \mb{G}^*(\ov{\A_F})$ such that
\begin{equation*}
    \Psi(g\gamma^*g^{-1})=\gamma.
\end{equation*}
We now use the maps constructed in Section \ref{bgmaps} to produce a cocycle  $(\nu_2, z^{\iso, 2}) \in  Z^1_{\bas}(\mc{E}_2(K/F), \mb{G}^*(\A_K))$ such that for each $\sigma \in \Gal(K/F)$, if $e_2, e_3$ are lifts of $\sigma$ to $\mc{E}_2(K/F)$ and $\mc{E}_3(K/F)$ respectively, then $\ov{z^{\iso}_{e_3}} = \ov{z^{\iso, 2}_{e_2}} \in  \mb{G}^*_{\ad}(\ov{F})$. In particular, starting with $(\nu, z^{\iso})$, we post-compose $z^{\iso}$ with the map $G(K) \to G(\A_K)$, then pushforward to $\mc{F}'$ and pullback to $\mc{E}_2(K/F)$ to get $z^{\iso,2}$. The desired property is then clear from the definitions of the pullback and pushforward maps and the fact that the image of $\nu$ is central in $\mb{G}^*$.

We define an abstract $\mc{E}_2(K/F)$-cocycle valued in $I^{\mb{G}^*}_{\gamma^*}(\A_K)$ by
\begin{equation}
    e \mapsto g^{-1} z^{\iso, 2}_e \sigma_e(g).
\end{equation}
It is easy to see that we indeed have $g^{-1} z^{\iso, 2}_e \sigma_e(g) \in I^{\mb{G}^*}_{\gamma^*}(\A_K)$ from the fact that $\ov{z^{\iso}_{e_3}} = \ov{z^{\iso, 2}_{e_2}} \in  \mb{G}^*_{\ad}(\ov{F})$.

If we restrict this cocycle to $D_2(\A_K)$ it equals $\nu_2$ and hence induces an element $\inv[z^{\iso}](\gamma^*, \gamma) \in H^1_{\bas}(\mc{E}_2, I^{\mb{G}^*}_{\gamma^*}(\ov{\A_F}))$. Via the localization maps, we get an element $\inv[z^{\iso}](\gamma^*, \gamma)(u) \in  H^1_{\bas}(\mc{E}_{\iso}, I^{\mb{G}^*}_{\gamma^*}(\ov{F_u}))$. Crucially, this class is trivial for all but finitely many places $u$ as shown in \S \ref{totalloc}. Let $z^{\iso}(u)$ be the localization of the cocycle $z^{\iso}$ at the place $u$. We record the following lemma.
\begin{lemma}{\label{bgloclem}}
The class of $e \mapsto g^{-1}_vz^{\iso}(u)_e \sigma_e(g_v)$ in $H^1_{\bas}(\mc{E}_{\iso}, I^{\mb{G}^*}_{\gamma^*}(\ov{F_u}))$ equals that of $\inv[z^{\iso}](\gamma^*, \gamma)(u)$.
\end{lemma}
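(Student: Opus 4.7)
The plan is to apply the explicit formula for $l^F_u$ from Section \ref{BGloc} to the cocycle representing $\inv[z^{\iso}](\gamma^*, \gamma)$ and then invoke Lemma \ref{loccompatlem} to identify the outcome.

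First I would unpack the localization. By its construction in Section \ref{BGloc}, $l^F_u$ is a composition of a restriction from $\mc{E}_2(K/F)$ to $\mc{E}_2(K/E^v)$, the $\Phi$-map whose $M$-component is the projection $\pi_v : \mb{G}^*(\A_K) \to \mb{G}^*(K_v)$ and whose extension homomorphism is the quotient $\tilde{\pi} : \mc{E}_2(K/E^v) \to \mc{E}^v_2(K/E^v)$, and finally the $\Psi$-map pulled back along $\tilde{\mu_v} : \mc{E}_{\iso}(K_v/F_u) \to \mc{E}^v_2(K/E^v)$. Applying this composition to the cocycle $e \mapsto g^{-1} z^{\iso, 2}_e \sigma_e(g)$ yields the $\mc{E}_{\iso}(K_v/F_u)$-cocycle
\[
e \mapsto \pi_v\!\left( g^{-1} z^{\iso, 2}_{w_e} \sigma_{w_e}(g) \right) = g_v^{-1} \, \pi_v(z^{\iso, 2}_{w_e}) \, \sigma_e(g_v),
\]
where $w_e \in \mc{E}_2(K/E^v)$ is any lift of $\tilde{\mu_v}(e)$ along $\tilde{\pi}$, and we have used that $\pi_v$ is multiplicative and intertwines the Galois actions. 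In particular, the twist by $g$ commutes with localization and becomes the twist by $g_v$ on the local side.

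Second, I would identify the map $e \mapsto \pi_v(z^{\iso, 2}_{w_e})$ as the localization of $z^{\iso, 2}$ from $\mc{E}_2(K/F)$ to $\mc{E}_{\iso}(K_v/F_u)$. By Lemma \ref{loccompatlem}, this coincides with the localization $z^{\iso}(u)$ of $z^{\iso}$ through $\mc{E}_3$. Because that lemma is proved by a diagram chase in which each constituent square commutes at the level of cocycles via Lemmas \ref{Philem}, \ref{Psilem}, and \ref{PhiPsilem}, one may arrange, by choosing compatible representatives of the various morphisms of gerbs appearing in the diagram, that the equality $\pi_v(z^{\iso, 2}_{w_e}) = z^{\iso}(u)_e$ holds on the nose as cocycles of $\mc{E}_{\iso}(K_v/F_u)$ valued in $\mb{G}^*(K_v)$. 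Substituting yields exactly $e \mapsto g_v^{-1} z^{\iso}(u)_e \sigma_e(g_v)$, and so the two cohomology classes in $H^1_{\bas}(\mc{E}_{\iso}, I^{\mb{G}^*}_{\gamma^*}(\ov{F_u}))$ coincide.

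The main technical point is the second step: securing cocycle-level (not merely cohomological) equality so that the subsequent twist by $g_v$ preserves the identification when passing from $\mb{G}^*$-coefficients to $I^{\mb{G}^*}_{\gamma^*}$-coefficients. This requires careful bookkeeping of the intermediate gerb morphisms in the diagram of Lemma \ref{loccompatlem}; however, the ambiguities in those morphisms are absorbed by the vanishing of $H^1$ of the abelian groups $D_i(K_v)$ invoked in the proof of that lemma, so the compatible choices do exist, and the argument is complete.
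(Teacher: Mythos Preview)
Your argument is correct and follows essentially the same route as the paper: apply the localization $l^F_u$ to the $\mc{E}_2$-cocycle $e\mapsto g^{-1}z^{\iso,2}_e\sigma_e(g)$ and observe that the twist by $g$ passes through the projection $\pi_v$ to become a twist by $g_v$. The paper carries this out by writing $\tilde{\mu_v}(e)=d\,l(e')$ with $d\in D_2(K_v)$ and $e'\in\mc{E}_2(K/E^v)$, then using that $\nu_v(d)\in Z_{\mb{G}^*}(K_v)$ is central to commute it past $\sigma_e(g_v)$; this yields $g_v^{-1}z^{\iso,2}(v)_e\sigma_e(g_v)$ directly, with $z^{\iso,2}(v)$ the $\mc{E}_2$-localization of $z^{\iso,2}$. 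Your version instead picks a genuine lift $w_e\in\mc{E}_2(K/E^v)$ of $\tilde{\mu_v}(e)$ along $\tilde\pi$ (such lifts exist since $D_2(\A_K)\to D_2(K_v)$ is surjective), which suppresses the $d$-term and the appeal to centrality.

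The one place where you do more work than the paper is your second step: you invoke Lemma~\ref{loccompatlem} to identify the $\mc{E}_2$-localization of $z^{\iso,2}$ with the $\mc{E}_3$-localization $z^{\iso}(u)$, and then argue that the gerb morphisms can be chosen so that this holds at the level of cocycles. That is a legitimate concern (since twisting by $g_v$ happens in $\mb{G}^*$-coefficients and must land in $I^{\mb{G}^*}_{\gamma^*}$-coefficients), and your resolution is sound. The paper sidesteps this entirely by simply writing the answer as $z^{\iso,2}(v)_e$ and treating this as $z^{\iso}(u)_e$; in effect the paper's direct computation never leaves the $\mc{E}_2$-picture, so no separate cocycle-level compatibility is needed. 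Your proof is thus slightly more cautious, but the core computation is the same.
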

\begin{proof}
We work at some fixed extension $K$ such that $z^{\iso}$ factors through $H^1_{\bas}(\mc{E}_2(K/F), \mb{G}^*(\A_K))$ and $g \in \mb{G}^*(\A_K)$.

The localization map defined in Section \ref{BGloc} at $u$ is a composition of maps
\begin{align*}
& H^1_{\alg}(\mc{E}_2(K/F), \mb{G}^*(\A_K)) \longrightarrow  H^1_{\alg}(\mc{E}_2(K/E^v), \mb{G}^*(\A_K))\\ 
& \longrightarrow H^1_{\alg}(\mc{E}^v_2(K/E^v), \mb{G}^*(K_v))
\longrightarrow H^1_{\alg}(\mc{E}_{\iso}(K_v/F_u), \mb{G}(K_v)).
\end{align*}
These maps are all defined on the level of cocycles and are induced from maps of extensions.

In particular, we have a natural map $\tilde{\mu_v}: \mc{E}^{\iso}(K_v/F_u) \to \mc{E}^v_2(K/E^v)$ as in Section \ref{BGloc}. Pick some $e \in \mc{E}^{\iso}(K_v/F_u)$.Then $\tilde{\mu_v}(e) \in \mc{E}^v_2(K/E^v)$ and we have $\tilde{\mu_v}(e)=d l(e')$ where $e' \in \mc{E}_2(K/E^v)$ and $d \in D_2(K_v)$ and where $l: \mc{E}_2(K/E^v) \to \mc{E}^v_2(K/E^v)$. 

Then by definition of the localization map, we have that the cocycle $z$ giving $\inv[z^{\iso}](\gamma^*, \gamma)(u)$ that is induced by the above maps from $g^{-1}z^{\iso, 2}\sigma(g)$ satisfies the following equality:
\begin{equation*}
    z(e)=[g^{-1} z^{\iso, 2}_{e'}\sigma_e(g)]_v\nu_v(d) =g^{-1}_v [z^{\iso, 2}_{e'}\nu_v(d)]\sigma_e(g_v)=g^{-1}_v z^{\iso ,2}(v)_e \sigma_e(g_v).
\end{equation*}
This is the desired equality.
\end{proof}

\begin{proposition}{\label{obseqz}}
The image of $\inv[z^{\iso}](\gamma^*, \gamma)$ in $H^1_{\bas}(\mc{E}_1, I^{\mb{G}^*}_{\gamma^*}(\ov{\A_F})/Z_{I^{\mb{G}^*}_{\gamma^*}}(\ov{F}))$ lies in $H^1(\Gamma_F,  I^{\mb{G}^*}_{\gamma^*}(\ov{\A_F}) / Z_{I^{\mb{G}^*}_{\gamma^*}}(\ov{F}))$ and agrees with $\obs(\gamma) \in X^*(Z(\widehat{I^{\mb{G}^*}_{\gamma^*}}))_{\Gal(\ov{F}/F)}$.
\end{proposition}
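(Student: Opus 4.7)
The plan is to combine the key global commutative diagram with the localization formula of Lemma \ref{bgloclem} and Kottwitz's local computation of the obstruction.

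First I will verify that the image actually lies in the Galois cohomology part. Tracing through the construction in Section \ref{bgmaps} of the composition $H^1_{\bas}(\mc{E}_3, -) \to H^1_{\bas}(\mc{E}_2, -) \to H^1_{\bas}(\mc{E}_1, -)$, the $Y_1$-component of the resulting basic cocycle is $\nu_3 \circ b' \circ b$, where $b \colon D_1 \to D_2$ satisfies $b^* = \Sigma \colon \Z[V_K] \to \Z$ and $b' \colon D_2 \to D_3$ satisfies $b'^* \colon \Z[V_K]_0 \hookrightarrow \Z[V_K]$. Since $\Sigma$ vanishes on $\Z[V_K]_0$, the composite $b' \circ b$ is zero, so the image cocycle has $\nu_1 = 0$ and is therefore inflated from a $\Gamma_F$-cocycle, placing its class in the image of $H^1(\Gamma_F, I^{\mb{G}^*}_{\gamma^*}(\ov{\A_F})/Z_{I^{\mb{G}^*}_{\gamma^*}}(\ov{F}))$.

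Next, I will apply the key global diagram \eqref{limkeycommdiagram} to the group $I^{\mb{G}^*}_{\gamma^*}$, which is connected reductive because $\mb{G}_{\der}$ is simply connected. By commutativity, the image of $\inv[z^{\iso}](\gamma^*,\gamma)$ in $X^*(Z(\widehat{I^{\mb{G}^*}_{\gamma^*}}))_{\Gamma_F}$ equals $\Sigma$ applied to the sum over places $u$ of the local Kottwitz images of the classes $l^F_u(\inv[z^{\iso}](\gamma^*,\gamma)) \in H^1_{\bas}(\mc{E}_{\iso}(K_v/F_u), I^{\mb{G}^*}_{\gamma^*}(K_v))$. By Lemma \ref{bgloclem} these localizations are represented by $e \mapsto g_v^{-1} z^{\iso}(u)_e \sigma_e(g_v)$. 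On the other hand, Kottwitz's construction of $\obs(\gamma)$ via \cite[Theorem 2.2]{Kot6} expresses it through the same summation $\Sigma$, now applied to the local Kottwitz images of the classes $[\sigma \mapsto g_v^{-1} u(\sigma) \sigma(g_v)] \in H^1(\Gamma_{F_u}, I^{\mb{G}^*}_{\gamma^*}(\ov{F_u})/Z_{I^{\mb{G}^*}_{\gamma^*}}(\ov{F_u}))$. Hence the proposition reduces to the local statement that, for each place $u$, these two local Kottwitz images agree in $X^*(Z(\widehat{I^{\mb{G}^*}_{\gamma^*}}))_{\Gamma_{F_u}}$.

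This local comparison is the main obstacle. To handle it, I will use that both $z^{\iso}(u)_{e_\sigma}$ and $u(\sigma)$ lift the same element $\Psi^{-1}\sigma(\Psi) \in \mb{G}^*_{\ad}(\ov{F_u})$, so their ratio lies in $Z_{\mb{G}^*}(\ov{F_u}) \subset Z_{I^{\mb{G}^*}_{\gamma^*}}(\ov{F_u})$, giving a difference cocycle valued in the center. One then runs the local analogue of the first step above, applied to the local gerb $\mc{E}_{\iso}$ and its $\Gamma_{F_u}$-cohomology quotient, to conclude that the central difference cocycle is killed by the local Kottwitz map. This is the local compatibility between the $\mc{E}_{\iso}$-cocycle and Galois-cohomology normalizations of local isocrystal invariants, established in analogous form by Kaletha and Taibi in \cite[\S 4.1]{KalTai} in the strongly regular case; their argument uses only that $I^{\mb{G}^*}_{\gamma^*}$ is connected reductive, which holds here since $\mb{G}_{\der}$ is simply connected, so it goes through verbatim.
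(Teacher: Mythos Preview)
Your first paragraph, showing $\nu_1=0$ via the vanishing of the composite $D_1\to D_2\to D_3$ on character groups, is exactly what the paper does.

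For the second claim you take a genuinely different and much longer route. The paper observes directly that, by construction, the cocycle $z^{\iso}$ (and hence $z^{\iso,2}$ and its $\mc{E}_1$-image) projects modulo its $\nu$-component to the adjoint cocycle $\sigma\mapsto\Psi^{-1}\sigma(\Psi)$. Since $\nu_1=0$, no quotient is needed at the $\mc{E}_1$ level: the $\Gamma_F$-cocycle one obtains for $\inv[z^{\iso}](\gamma^*,\gamma)$ in $I^{\mb{G}^*}_{\gamma^*}(\ov{\A_F})/Z_{I^{\mb{G}^*}_{\gamma^*}}(\ov{F})$ is literally $\sigma\mapsto g^{-1}u(\sigma)\sigma(g)$ modulo center, which is the defining cocycle of $\obs(\gamma)$. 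The equality is thus at the level of cohomology classes in $H^1(\Gamma_F, I^{\mb{G}^*}_{\gamma^*}(\ov{\A_F})/Z_{I^{\mb{G}^*}_{\gamma^*}}(\ov{F}))$, with no localization needed.

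Your detour through the key global diagram, localization, and local Kottwitz maps can in principle be made to work, but as written it has two weaknesses. First, it only compares images in $X^*(Z(\widehat{I^{\mb{G}^*}_{\gamma^*}}))_{\Gamma_F}$, not the underlying cohomology classes; and you are silently identifying this with $\pi_0(Z(\widehat{I^{\mb{G}^*}_{\gamma^*}})^{\Gamma_F})^D$, where $\obs(\gamma)$ actually lives, without saying how. Second, the crucial local step is not proved: asserting that the argument of \cite[\S4.1]{KalTai} ``goes through verbatim'' is not a proof, and in any case that reference treats tori arising from strongly regular elements, whereas here you need the compatibility of the $\mc{E}_{\iso}$- and Galois-cohomology Kottwitz maps for the connected reductive group $I^{\mb{G}^*}_{\gamma^*}$. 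The direct argument in the paper bypasses all of this by noting that the two global cocycles already agree modulo center.
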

\begin{proof}
Suppose $(\nu_1, z^{\iso, 1})$ is a cocycle representative of the image of $\inv[z^{\iso}](\gamma^*, \gamma)$ in $H^1_{\bas}(\mc{E}_1,  I^{\mb{G}^*}_{\gamma^*}(\ov{\A_F})/Z_{I^{\mb{G}^*}_{\gamma^*}}(\ov{F}))$. Then $\nu_1$ is trivial since it is equal to the pre-composition of $\nu$ by $D_1 \to D_2 \to D_3$ which is trivial. This implies the first claim. 

For the second claim, we note that by definition, the projection of $z^{\iso}$ to $\mb{G}^*_{\ad}$ is constant on $D_3(K)$ and descends to give the cocycle $z \in Z^1(\Gamma_F, \mb{G}^*_{\ad}(\ov{F}))$. By construction the same will also be true $z^{\iso, 2}$ if we quotient out by the image of $\nu_2$ and $z^{\iso}_1$ if we quotient by the image of $\nu_1$. But $\nu_1$ is in fact trivial so this gives our desired result.
\end{proof}

We have now constructed $\inv[z^{\iso}](\gamma^*, \gamma)$ which is a refinement of $\obs(\gamma)$.

\subsection{Overview of Transfer Factors}{\label{transfactdefnsect}}

In this subsection, we briefly review the theory of transfer factors.  To that end, we consider a connected reductive group $G$ defined over a local field $F$ of characteristic $0$ and let $\mc{E}^r(G)$ denote the set of isomorphism classes of refined endoscopic data as in \cite[\S2.3]{BM2}. A refined endoscopic datum is a tuple $(H, s, \eta)$ where $H$ is a quasisplit connected reductive group over $F$, where $s \in Z(\widehat{H})^{\Gamma_F}$, and $\eta: \widehat{H} \to \widehat{G}$ is such that $I^{\widehat{G}}_{\eta(s)} = \eta(\widehat{H})$ and the $\widehat{G}$-conjugacy class of $\eta$ is $\Gamma_F$-stable. We say that $(H, s, \eta)$ and $(H', s', \eta')$ are isomorphic if there exists an $F$-isomorphism $\alpha: H' \to H$ and a choice of $\widehat{\alpha}: \widehat{H} \to \widehat{H'}$ ($\widehat{\alpha}$ is determined by $\alpha$ up to $\widehat{H}$-conjugacy) and $g \in \widehat{G}$ such that the diagram 
\begin{equation*}
\begin{tikzcd}
 \widehat{H} \arrow[d, " \widehat{\alpha}", swap] \arrow[r, "\eta"] & \widehat{G} \arrow[d, "\Int(g)"]\\
 \widehat{H'} \arrow[r, "\eta'", swap] & \widehat{G}
\end{tikzcd}    
\end{equation*}
commutes and $\widehat{\alpha}(s) = s'$. 

Transfer factors arise in the comparison of orbital integrals over $G$ and endoscopic groups $H$. To describe them, we fix a refined endoscopic datum $(H, s, \eta)$ and fix a lift of $\eta$ to a map ${}^L \eta : {}^LH \to {}^LG$. Such a lift will always exist if $G_{\der}$ is simply connected but may not exist in general. Langlands and Shelstad (\cite{LS2}) construct a local transfer factor which is a function $\Delta: H(F)_{G-\sr} \times G(F)_{\sr} \to \C$, where $G(F)_{\sr}$ denotes the subset of strongly regular semisimple elements of $G(F)$ and $H(F)_{G-\sr}$ denote the subset of semisimple elements of $H(F)$ that transfer to strongly regular elements of $G(F)$. The local transfer factor is canonical up to multiplication by a scalar in $\C^{\times}$. 

Now suppose that $F$ is a global field and fix $\mb{G}$ a reductive group over $F$ and an inner twist $\psi: \mb{G}^* \to \mb{G}$ between $\mb{G}$ and its quasi-split inner form $\mb{G}^*$. Then after fixing global analogues of the data $(\mb{H}, s, {}^L\eta)$, Langlands and Shelstad construct a global transfer factor, which is a function $\Delta: \mb{H}(\A_F)_{\mb{G}-\sr} \times \mb{G}(\A_F)_{\sr} \to \C$. Following the construction in \cite[\S7.3]{KS}, this global factor is defined as a product over each place $v$ of $F$ of local transfer factors of $(\mb{H}_v, \mb{G}_v)$. We require that these local transfer factors are compatibly chosen in the sense that they are constructed from fixed global Whittaker data, $a$-data and $\chi$-data by taking localizations at each place.  Unlike the local factors, this global factor is made completely canonical by observing that when $\gamma^{\mb{H}} \in \mb{H}(F)_{\mb{G}-\sr}$ and transfers to $\gamma^* \in \mb{G}^*(F)$ and $\gamma \in \mb{G}(\A_F)$, one has
\begin{equation}{\label{transfactcanoneqn}}
    \Delta(\gamma_{\mb{H}}, \gamma) = \langle \obs(\gamma), \widehat{\varphi}^{-1}_{\gamma^*, \gamma^{\mb{H}}}(s) \rangle,
\end{equation}
where $\varphi_{\gamma^*, \gamma^{\mb{H}}}: S_{\mb{H}} \to S$ is an admissible isomorphism of the maximal tori $S_{\mb{H}} \subset \mb{H}$ and $S \subset \mb{G}^*$ equal to the centralizers of $\gamma^{\mb{H}}$ and $\gamma^*$ in their respective groups.

In practice, one often uses transfer factors to relate the Langlands correspondences of $\mb{G}(F_v)$ and $\mb{H}(F_v)$. To do so, one needs a canonical normalization of local transfer factors that is compatible with analogous constructions in representation theory. When $\mb{G}$ is quasi-split, a canonical normalization is given by fixing a Whittaker datum as described in \cite[\S5.3]{KS}. When $\mb{G}$ is not quasi-split, the problem of finding a canonical normalization of transfer factors at each place $v$ of $F$ compatible with the Langlands correspondence is quite subtle and described extensively in \cite{Kal1}. There are two approaches to solving this problem. The first uses $\mb{B}(F,\mb{G})$ and is described in \cite{KalTai} and the second uses the rigid gerb of Kaletha (\cite{KalethaGlobalRigid}). 

However, in the context of the trace formula for Shimura varieties one needs to extend transfer factors to the domain $\mb{H}(\A_F)_{(\mb{G}, \mb{H})-\reg} \times \mb{G}(\A_F)_{\semis}$, where $\mb{H}(\A_F)_{(\mb{G}, \mb{H})-\reg}$ denotes the locus of $(\mb{G}, \mb{H})$-regular semisimple elements of $\mb{H}(\A_F)$. These are the semisimple $\gamma^{\mb{H}} \in \mb{H}(\A_F)$ such that if $\gamma^* \in \mb{G}^*(F)$ is a transfer of $\gamma^{\mb{H}}$, then $I^{\mb{G}^*}_{\gamma^*}$ is an inner form of $I^{\mb{H}}_{\gamma^{\mb{H}}}$. In \cite{LS1}, Langlands and Shelstad show that one can extend local and global transfer factors by continuity to $H(F)_{(G,H)-\reg} \times G(F)_{\semis}$ and $\mb{H}(\A_F)_{(\mb{G}, \mb{H})-\reg} \times \mb{G}(\A_F)_{\semis}$ respectively. It will be useful to record explicit formulas for these transfer factors on the $H(F)_{(G,H)-\reg} \times G(F)_{\semis}$ locus. This will be done in \S \ref{transfactsect}.

It is asserted in \cite[\S2.4]{LS1} (immediately before their \S2.5) that the global transfer satisfies \eqref{transfactcanoneqn} for $\gamma^{\mb{H}} \in \mb{H}(F)_{(\mb{G}, \mb{H})-\reg}$ that transfers to $\gamma^* \in \mb{G}^*(F)$ and $\gamma \in \mb{G}(\A_F)$ but no proof of this assertion is given. In \cite[Lemma 4.1.(i)]{ArthurGlobalDescent}, Arthur proves a formula that implies this fact in the case of elliptic endoscopy and elliptic semisimple elements. In Corollary \ref{cor: globaltranscor} we record a proof of this fact for all $\mb{G}$ arising from $\mb{B}(F, \mb{G}^*)$.


For applications (cf. \cite[\S4]{BM2}), it is useful to describe the transfer factors on the $(\mb{G}, \mb{H})$-regular locus explicitly without using limits. In \S\ref{transfactsect}, we use the theory of $\mb{B}_2(F, \mb{G})$ we have developed to construct local and global transfer factors on the $(\mb{G}, \mb{H})$-regular locus that agree with Kaletha's construction of $\mb{B}(F,\mb{G})$-normalized transfer factors on the strongly regular locus.

\subsection{Construction of factors}{\label{transfactsect}}
We now construct our transfer factors. We return to the notation of \S\ref{sec:refinedinv}. Fix an $F$-splitting $(B, T, \{X_{\alpha}\})$ of $\mb{G}^*$ and a nontrivial character $\chi: \A_F/F \to \C^{\times}$. Following \cite[\S5.3]{KS} (see also \cite{Kal1}), the $F$-splitting and $\chi$ induce a pair $(B, \lambda_{\chi})$ where $B$ has unipotent radical $N$ and $\lambda_{\chi}: N(\A_F)/N(F) \to \C^{\times}$ is a \emph{generic} character (i.e. $\lambda_{\chi}$ is non-trivial when restricted to each simple relative root subgroup). The $\mb{G}^*(F)$-conjugacy class of $(B, \lambda_{\chi})$ gives a global Whittaker datum $\mathfrak{w}$ of $\mathbf{G}^*$. For each place $v$ of $F$, our splitting plus the restriction, $\chi_v$, of $\chi$ to $F_v$ induce a local Whittaker datum $\mf{w}_v$. This is the same as the local Whittaker datum induced by the inclusion $N(F_v) \to N(\A_F)$. In this section we use the notation $\mb{B}(F,G)$ for $H^1_{\alg}(\mc{E}_{\iso}, G(\ov{F}))$, where $F$ is a local field.

We now fix an isomorphism class in $\mc{E}^r(\mb{G})$ and a representative $(\mb{H}, s, \eta)$ of this class. Since $\mb{G}_{\der}$ is simply connected, we can lift $\eta$ to a map ${}^L\eta: {}^L\mb{H} \to {}^L\mb{G}$. We fix $\Gamma_F$ equivariant splittings of $\widehat{\mb{G}}$ and $\widehat{\mb{G^*}}$. Then the map $\Psi: \mb{G^*} \to \mb{G}$ induces an isomorphism ${}^L\mb{G} \to {}^L\mb{G}^*$ preserving the splittings and hence we can consider $(\mb{H}, s, \eta)$ to be a refined endoscopic datum of $\mb{G}^*$.

At each place $v$ of $F$, we get a refined endoscopic datum $(\mb{H}_{F_v} , s, \eta)$ of $\mb{G}^*_{F_v}$. This combined with our choice of ${}^L \eta$ gives the ``Whittaker normalized'' transfer factor between $\mb{H}_v$ and $\mb{G}^*_v$ which we denote $\Delta[\mf{w}_v](\gamma^{\mb{H}}, \gamma^*)$ for $\gamma^{\mb{H}} \in \mb{H}(F_v)$ a $(\mb{G},\mb{H})$-regular semisimple element and $\gamma^*$ a semisimple element of $\mb{G}^*(F_v)$. As explained in  \cite[\S5.5]{KS2}, there are two normalizations of the factor which are compatible with twisted endoscopy and these are denoted by $\Delta'_{\lambda}$ and $\Delta^{\lambda}_D$. The first transfer factor is compatible with the arithmetic normalization of the local Langlands correspondence while the second is compatible with the geometric normalization. In this paper, we will use the second of these normalizations which notably differs from the choice made in \cite{KalTai}. Our construction of the various $\obs$ and $\inv$ invariants is analogous to \cite{KalTai} and hence differs from that of \cite{KS} (cf \cite[Rem. 4.2.2]{KalTai}).

We now record explicit formulas for $\Delta[\mf{w}_v](\gamma^{\mb{H}}, \gamma^*)$. Recall that these transfer factors are defined in \cite[\S2.4]{LS1} by taking a limit of $\Delta[\mf{w}_v](\gamma^{\mb{H}}_n, \gamma^*_n)$, such that $(\gamma^{\mb{H}}_n, \gamma^*_n) \in  \mb{H}(F_v)_{\mb{G}^*-\sr} \times \mb{G}^*(F_v)_{\sr}$ and each $\gamma^{\mb{H}}_n$ transfers to $\gamma^*_n$. We assume further that all $\gamma^{\mb{H}}_n$ (resp. $\gamma^*_n$) lie in a fixed maximal torus $S_{\mb{H}} \subset \mb{H}$ (resp. $S \subset \mb{G}^*$) containing $\gamma^{\mb{H}}$ (resp. $\gamma^*$). We let $\varphi_{\gamma^*, \gamma_{\mb{H}}}: S_{\mb{H}} \to S$ be an admissible isomorphism of these tori satisfying $\varphi_{\gamma^*, \gamma_{\mb{H}}}(\gamma^{\mb{H}}) = \gamma^*$, and we assume $(\gamma^{\mb{H}}_n, \gamma^*_n)$ have been chosen such that $\varphi_{\gamma^*, \gamma^{\mb{H}}}(\gamma^{\mb{H}}_n) = \gamma^*_n$. We also make use of fixed $\Gal(\ov{F}/F)$-stable Borel pairs $(B,T), (\wh{B}, \wh{T}), (B_{\mb{H}}, T_{\mb{H}}), (\wh{B_{\mb{H}}}, \wh{T_{\mb{H}}})$ and assume that $\eta(\wh{T_{\mb{H}}}) = \wh{T}$ and $\eta(\wh{B_{\mb{H}}}) = \wh{B}$. 

The Whittaker-normalized transfer factor $\Delta[\mf{w}_v]$ is a product of a number of terms:
\begin{equation*}
    \Delta[\mf{w}] = \epsilon_L(V, \chi_v) \Delta_I\Delta_{II}\Delta_{III_{2,D}}\Delta_{IV}, 
\end{equation*}
which we examine in turn. See also \cite[\S1.3]{Kal1} for a summary of these terms (though beware that this source uses the $\Delta'_{\lambda}$ normalization). 

The local $\epsilon$-factor  $\epsilon_L(V, \chi_v)$ for the virtual representation $V= X^*(T)_{\C} - X^*(T_{\mb{H}})_{\C}$ does not depend on $(\gamma^{\mb{H}}_n, \gamma^*_n)$ and hence is the same as in the strongly regular case. Similarly, $\Delta_I$ depends only on $S, T$ and $s$ and so is constant over our limit.

The term $\Delta_{II}$ requires picking $\chi$ and $a$-data for the roots $R(S,\mb{G}^*)$. Then it is given as a product
\begin{equation}{\label{eqn: delta2}}
    \Delta_{II} = \prod\limits_{\alpha} \chi_{\alpha}\left(\frac{\alpha(\gamma^*_n) - 1}{a_{\alpha}}\right),
\end{equation}
where the product is over sets of representatives of $\Gal(\ov{F}_v/F_v)$-orbits in $ R(S,\mb{G}^*) \setminus \varphi_{\gamma^*, \gamma^{\mb{H}}} R(S_{\mb{H}}, \mb{H})$. Note that this limit exists because by the $(\mb{G}^*, \mb{H})$-regular assumption, $\gamma^*$ does not vanish on the roots $\alpha$ that appear formula \eqref{eqn: delta2}. Moreover, the limit is clearly given by replacing $\gamma^*_n$ with $\gamma^*$ in formula \eqref{eqn: delta2}.

The term $\Delta_{III_{2,D}}$ equals $a(\gamma^*_n)$, where $a$ is a certain character of $S(F)$ coming from Langlands duality for tori. The character does not depend on $(\gamma^{\mb{H}}_n, \gamma^*_n)$ and hence the limit is simply $a(\gamma^*)$.

Finally, the term $\Delta_{IV}$ is given by
\begin{equation}{\label{eqn: delta4}}
    \Delta_{IV} = \frac{|\det(\Ad(\gamma^*_n) -1 \mid \Lie(\mb{G}^*) \setminus \Lie(S) )|^{\frac{1}{2}}_v}{|\det(\Ad(\gamma^{\mb{H}}_n) -1 \mid \Lie(\mb{H}) \setminus \Lie(S_{\mb{H}}) )|^{\frac{1}{2}}_v}.
\end{equation}
As $\gamma^*_n \to \gamma^*$ (resp. $\gamma^{\mb{H}}_n \to \gamma^{\mb{H}}$), the map $\Ad(\gamma^*_n) -1: \Lie(I^{\mb{G}^*}_{\gamma^*}) \to \Lie(\mb{G}^*)$ (resp. $\Ad(\gamma^{\mb{H}}_n) -1: \Lie(I^{\mb{H}}_{\gamma^{\mb{H}}}) \to \Lie(\mb{H})$) becomes trivial. Because of $(\mb{G}^*, \mb{H})$-regularity, the Lie algebras $\Lie(I^{\mb{H}}_{\gamma^{\mb{H}}})$ and $\Lie(I^{\mb{G}^*}_{\gamma^*})$ are of the same dimension so formula \eqref{eqn: delta4} becomes in the limit:
\begin{equation}{\label{eqn: limitdelta4}}
       \Delta_{IV} = \frac{|\det(\Ad(\gamma^*) -1 \mid \Lie(\mb{G}^*) \setminus \Lie(I^{\mb{G}^*}_{\gamma^*}) )|^{\frac{1}{2}}_v}{|\det(\Ad(\gamma^{\mb{H}}) -1 \mid \Lie(\mb{H}) \setminus \Lie(I^{\mb{H}}_{\gamma^{\mb{H}}}) )|^{\frac{1}{2}}_v}.
\end{equation}
We now prove the following Lemma using the above computations.
\begin{lemma}{\label{lem: transfactvanish}}
    For the number field $F$ and a pair $(\gamma^{\mb{H}}, \gamma^*) \in \mb{H}(F)_{(\mb{G}^*, \mb{H})-\reg} \times \mb{G}^*(F)_{\semis}$ such that $\gamma^{\mb{H}}$ transfers to $\gamma^*$, we have 
    \begin{equation*}
        \Delta[\mf{w}](\gamma^{\mb{H}}, \gamma^*) = 1.
    \end{equation*}
\end{lemma}
\begin{proof}
    We prove that for each term in the transfer factor, the product over all places equals $1$.

    The product of the local $\epsilon_L(V, \chi_v)$ terms equals a global root number of a virtual $\Gal(\ov{F}/F)$ representation coming from base change from $\Z$ (and hence also $\R$) and therefore equals $1$.

    The global $\Delta_I$ term vanishes as in the proof of \cite[Theorem 6.4A]{LS2} since this term doesn't depend on $(\gamma^{\mb{H}}, \gamma^*)$ beyond the fixed isomorphism $\varphi_{\gamma^*, \gamma^{\mb{H}}}$ which is also used for each pair $(\gamma^{\mb{H}}_n, \gamma^*_n)$.

    The global $\Delta_{II}$ also vanishes by the same argument as \cite[Theorem 6.4A]{LS2} once we fix a global $\chi$-datum and use the local $\chi$-data coming via pullback from the global one. In particular, the point is that the global $\chi$-datum gives functions from $\chi_{\alpha}: \A^{\times}_{F_{\alpha}} / F^{\times}_{\alpha} \to \C^{\times}$, and these will vanish on $\frac{\alpha(\gamma^*) - 1}{a_{\alpha}} \in F^{\times}_{\alpha}$.

    A similar principle proves the vanishing of the global $\Delta_{III_{2,D}}$ term. Namely, by fixing global $\chi$-data, we get that our local characters $a_v: F_v \to \C^{\times}$ come from a global character $a: S(\A_F)/S(F) \to \C^{\times}$ which therefore vanishes on $\gamma^*$.

    Finally, the vanishing of $\Delta_{IV}$ follows from the fact that the adelic absolute value on $\A^{\times}_F$ vanishes on $F^{\times}$.
\end{proof}

Now, in analogy with the definition given in equation \cite[ (4.2)]{KalTai}, we define a candidate transfer factor as a function
\begin{equation*}
    \Delta[\mf{w}_v, z^{\iso}(v)] :\mb{H}_v(F_v)_{(\mb{G}, \mb{H})-\reg} \times \mb{G}_v(F_v)_{\semis} \to \C
\end{equation*}
by
\begin{equation}
   \Delta[\mf{w}_v, z^{\iso}(v)](\gamma^{\mb{H}}, \gamma) := \Delta[\mf{w}_v](\gamma^{\mb{H}}, \gamma^*) \cdot \langle \inv[z^{\iso}](\gamma^*, \gamma)(v), \widehat{\varphi}^{-1}_{\gamma^*, \gamma^{\mb{H}}}(s) \rangle^{-1}, 
\end{equation}
where $\gamma^{\mb{H}} \in \mb{H}(F_v)$ is $(\mb{G},\mb{H})$-regular semisimple, $\gamma \in \mb{G}(F_v)$ and $\gamma^* \in \mb{G}^*(F_v)$ are semisimple, and $\Psi_v(\gamma^*)$ and $\gamma$ are stably conjugate.

Note that we have a natural pairing 
\begin{equation}
    \mb{B}(F_v, I^{\mb{G}^*}_{\gamma^*})
\times Z(\widehat{I^{\mb{G}^*}_{\gamma^*}})^{\Gal(\ov{F}_v/F_v)} \to \C^{\times},
\end{equation}
induced by the Kottwitz map
\begin{equation}
\kappa_{I^{\mb{G}^*}_{\gamma^*}} : \mb{B}(F_v, I^{\mb{G}^*}_{\gamma^*}) \to X^*(Z(\widehat{I^{\mb{G}^*}_{\gamma^*}})^{\Gal(\ov{F}_v/F_v)}).
\end{equation}
Then the term
\begin{equation}
    \langle \inv[z^{\iso}](\gamma^*, \gamma)(v), \widehat{\varphi}^{-1}_{\gamma^*, \gamma^H}(s) \rangle^{-1},
\end{equation}
is the above pairing with $\varphi_{\gamma^*, \gamma^{\mb{H}}}$ some admissible homomorphism transferring $\gamma^{\mb{H}}$ to $\gamma^*$.

We now fix place $v$ of $F$ and define $H := \mb{H}_{F_v}$ and similarly for $G$ and $G^*$. Our main result is as follows.
\begin{theorem}{\label{localtrans}}
The term $\Delta[\mf{w}_v, z^{\iso}(v)]$ is a local transfer factor as in \cite[\S2.4]{LS1} and is the continuous extension of the $\mb{B}(F_v, G)$-normalized local transfer factors from $H(F_v)_{G-\sr} \times G(F_v)_{\sr}$ to $H(F_v)_{(G, H)-\reg} \times G(F_v)_{\semis}$.
\end{theorem}
Before we give the proof of Theorem \ref{localtrans}, we need to establish some lemmas.
\begin{lemma}{\label{extendedbij}}
Let $\pi: \mb{B}(F_v, I^{G^*}_{\gamma^*}) \to \mb{B}(F_v, G^*)$ be the projection.  There is a natural bijection 
\begin{equation}{\label{conjugacybg}}
\begin{tikzcd}
(\Psi, z, [\gamma])/ \sim \arrow[r, leftrightarrow]& \{b \in \mb{B}_{\bas}(F_v, I^{G^*}_{\gamma^*}) : \pi(b) \in \mb{B}_{\bas}(F_v, G^*)\} 
\end{tikzcd}
\end{equation}
where $(\Psi, z)$ is an extended pure inner twist of $G^*$ and $[\gamma] \subset G(F_v)$ is a conjugacy class such that for one (and hence any) $\gamma \in [\gamma]$, there exists $g \in G^*(\ov{F_v})$ such that $\Psi(g\gamma^*g^{-1})=\gamma$. We say that $(\Psi_1, z_1, [\gamma]_1) \sim (\Psi_2, z_2, [\gamma]_2)$ if there exists an isomorphism of extended pure inner twists $(f, \delta)$ such that $f([\gamma_1])=[\gamma_2]$. 
\end{lemma}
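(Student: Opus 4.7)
The plan is to construct the bijection explicitly via mutually inverse maps, using the cocycle $e \mapsto g^{-1} z^{\iso}_e \sigma_e(g)$ that already appeared in the definition of $\inv[z^{\iso}](\gamma^*, \gamma)$.

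\emph{The forward map.} Given $(\Psi, z, [\gamma])$ with $z = (\nu, z^{\iso})$, I would choose a representative $\gamma \in [\gamma]$ and an element $g \in G^*(\ov{F_v})$ with $\Psi(g\gamma^* g^{-1}) = \gamma$, and set
\begin{equation*}
    \Phi(\Psi, z, [\gamma]) := \bigl[\,(\nu,\ e \mapsto g^{-1} z^{\iso}_e \sigma_e(g))\,\bigr].
\end{equation*}
To see this takes values in $I^{G^*}_{\gamma^*}(\ov{F_v})$, I apply $\sigma_e$ to $\Psi(g\gamma^*g^{-1}) = \gamma$ and use $\sigma_e(\Psi) = \Psi \circ \Int(z^{\iso}_e)$ (as endomorphisms of $G^*$) to derive $\Int(g^{-1} z^{\iso}_e \sigma_e(g))(\gamma^*) = \gamma^*$. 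The cocycle identity is inherited from $z^{\iso}$, and basicness in $I^{G^*}_{\gamma^*}$ together with basicness of its $\pi$-projection in $G^*$ hold because $\nu$ factors through $Z_{G^*} \subset Z_{I^{G^*}_{\gamma^*}}$ to begin with.

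\emph{Well-definedness and the inverse.} For independence of $g$: any two choices differ by right multiplication by an element of $I^{G^*}_{\gamma^*}(\ov{F_v})$, which acts by the equivalence of the $H^1_Y$ formalism. For independence of $\gamma \in [\gamma]$: replacing $\gamma$ by $m\gamma m^{-1}$ with $m \in G(F_v)$ forces the choice $g' = \Psi^{-1}(m) g$, and a direct computation using $\sigma_e(\Psi^{-1}(m)) = (z^{\iso}_e)^{-1} \Psi^{-1}(m) z^{\iso}_e$ (derived from $\sigma_e(m) = m$ and the inner twist relation) shows $g'^{-1} z^{\iso}_e \sigma_e(g') = g^{-1} z^{\iso}_e \sigma_e(g)$ on the nose. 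For an isomorphism $(f, h): (\Psi_1, z^1) \to (\Psi_2, z^2)$ with $f([\gamma_1]) = [\gamma_2]$, choosing $g_2 = h g_1$ with $\gamma_2 = f(\gamma_1)$ yields identical cocycles by the same kind of cancellation, using $z^1_e = h^{-1} z^2_e \sigma_e(h)$. Conversely, given $b = [(\nu, x)]$ on the right, I would project to $(\nu, \pi(x)) \in Z^1_{\bas}(\mc{E}_{\iso}, G^*(\ov{F_v}))$, producing an extended pure inner twist $(\Psi_x, z_x)$; then $\gamma := \Psi_x(\gamma^*)$ lies in $G_x(F_v)$, since $\sigma_e(\gamma) = \sigma_e(\Psi_x)(\gamma^*) = \Psi_x(\Int(x_e)(\gamma^*)) = \Psi_x(\gamma^*) = \gamma$ using $x_e \in I^{G^*}_{\gamma^*}$. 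Set $\Phi^{-1}(b) := (\Psi_x, z_x, [\gamma])$.

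\emph{Mutual inversion.} The composition $\Phi \circ \Phi^{-1}$ is transparent: starting from $b = [(\nu, x)]$, $\Phi^{-1}$ produces the triple with $g = 1$, and $\Phi$ then returns $(\nu, 1 \cdot x \cdot 1) = (\nu, x)$. The main bookkeeping is for $\Phi^{-1} \circ \Phi$: starting from $(\Psi, z, [\gamma])$, $\Phi$ produces $b$, and $\Phi^{-1}$ yields $(\Psi_x, z_x, [\Psi_x(\gamma^*)])$ with $\Psi_x = \Psi \circ \Int(g)$ and $z_x = (\nu, g^{-1} z^{\iso}_e \sigma_e(g))$; one then checks $G_x = G$ as $F$-schemes (the Galois actions coincide via the relation $\sigma_{G^*} \Int(g^{-1}) = \Int(\sigma(g^{-1})) \sigma_{G^*}$), that $(\text{id}_G, g)$ is an isomorphism of extended pure inner twists $(\Psi_x, z_x) \to (\Psi, z)$, and that $\Psi_x(\gamma^*) = \Psi(g \gamma^* g^{-1}) = \gamma$. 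The main obstacle is simply keeping the various Galois-transformation rules for $\Psi$, $g$, and $z^{\iso}$ organized; once the computation $\sigma_e(\Psi^{-1}(m)) = (z^{\iso}_e)^{-1} \Psi^{-1}(m) z^{\iso}_e$ is in hand, all the cancellations fall out mechanically.
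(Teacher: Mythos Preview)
Your proposal is correct and follows essentially the same strategy as the paper: both define the forward map via the cocycle $e \mapsto g^{-1}z^{\iso}_e\sigma_e(g)$ and verify well-definedness through the same $\Int(z^{\iso}_e)$-twisting computations. Your construction of the inverse is a mild simplification of the paper's---rather than choosing an arbitrary representative $z^{\iso}$ of $\pi(b)$ and then solving for $g$ with $z_b(e) = g^{-1}z^{\iso}(e)\sigma_e(g)$, you take the $I^{G^*}_{\gamma^*}$-valued cocycle itself as the $G^*$-valued representative (forcing $g=1$); and you verify both compositions explicitly, whereas the paper is content to exhibit a preimage and leave the remaining bijection check implicit.
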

\begin{proof}
The map from left to right in Equation \eqref{conjugacybg} is defined as follows. We pick a $\gamma \in [\gamma]$ and $g \in G^*(\ov{F_v})$ as above and define an element of $\mb{B}(F_v, I^{G^*}_{\gamma^*})$ via the cocycle $e \mapsto g^{-1} z_e \sigma_e(g)$. It is clear that the projection to $\mb{B}(F_v, G^*)$ recovers the cohomology class of $z$ which is basic. We show the map is well-defined. Indeed if we choose a different $\gamma_1 \in [\gamma], g_1 \in G^*(\ov{F_v})$ such that $\Psi(g_1\gamma^*g^{-1}_1)=\gamma_1$, then we can pick $h \in G(F_v)$ such that $\Int(h)(\gamma)=\gamma_1$. Then we get $g^{-1}_1 \Psi^{-1}(h)g \in I^{G^*}_{\gamma^*}(\ov{F_v})$ and hence $e \mapsto g^{-1}z_e\sigma_e(g)$ is in the same cohomology class as
\begin{align*}
    e &\mapsto [g^{-1}_1\Psi^{-1}(h)g]g^{-1}z_e\sigma_e(g)[\sigma_e(g^{-1}\Psi^{-1}(h^{-1})g_1)]\\
    &=g^{-1}_1\Psi^{-1}(h)z_e\sigma_e(\Psi^{-1}(h^{-1}))\sigma_e(g_1)\\
    &=g^{-1}_1 \Psi^{-1}(h)z_e[z_e^{-1}\Psi^{-1}(h^{-1})z_e]\sigma_e(g_1)\\
    &= g^{-1}_1z_e\sigma_e(g_1).
\end{align*}
If $(\Psi_1, z_1, [\gamma]) \sim (\Psi_2, z_2, [\gamma])$ and $(f, \delta)$ is an isomorphism then we can pick $\gamma_i \in G_i(F_v)$ so that $f(\gamma_1)=\gamma_2$ and $g_i \in G^*_i(\ov{F_v})$ so that $g_2=\delta g_1$. Then we have
\begin{equation*}
    g_2^{-1} z_{2,e}\sigma_{e}(g_2) = g^{-1}_1 z_{1,e} \sigma_e(g_1)
\end{equation*}
as desired. Hence, we indeed get a well defined map 
\begin{equation*}
    (\Psi, z, [\gamma])/ \sim \to \mb{B}_{\bas}(F_v, I^{G^*}_{\gamma^*}).
\end{equation*}

Conversely, let $b \in \mb{B}_{\bas}(F_v, I^{G^*}_{\gamma^*})$ such that $\pi(b) \in \mb{B}(F_v, G^*)$ is basic. Then let $z^b:\mc{E}^{\iso} \to I^{G^*}_{\gamma^*}$ be a cocycle representing $b$ and note that the  composition with the inclusion $i: I^{G^*}_{\gamma^*} \hookrightarrow G^*$ gives a cocycle $z^{\iso}$ representing $\pi(b)$. Let $\Psi: G^* \to G$ be an extend pure inner twist associated to $z^{\iso}$. Now we consider $\Psi(\gamma^*)$. We will be done if we can show that this is an element of $G(F_v)$. Pick $\sigma \in \Gamma_{F_v}$ and $e \in \mc{E}^{\iso}$ so that $\sigma_e=\sigma$. Then since we have $\Psi^{-1}  \circ \sigma(\Psi) = \Int(z^{\iso}_e)$, we get
\begin{align*}
    \sigma(\Psi(\gamma^*)) & =\sigma(\Psi)(\sigma(\gamma^*))\\
    &=\Psi(z^{\iso}_e \sigma(\gamma^*) {z^{\iso}_e}^{-1})\\
    &=\Psi(\sigma(\gamma^*))\\ 
    &=\Psi(\gamma^*),
\end{align*}
as desired. We leave it to the reader to check that the two maps we have constructed are inverses of each other.
\end{proof}

The next lemma is related to the analogous fact for Galois cohomology (\cite[Lemma 10.2]{Kot6}). We recall that for $G$ a connected reductive group over $F_v$ (where $v$ is possibly infinite), a \emph{fundamental torus} is defined to be a maximal $F_v$-torus of minimal split rank. In the case where $v$ is $p$-adic, such a $T$ will be elliptic.

\begin{lemma}{\label{bgsurj}}
Suppose that $G$ is a connected reductive group over $F_v$. Let $T \subset G$ be a fundamental torus. Then the image of $\mb{B}(F_v, T)$ in  $\mb{B}(F_v, G)$ contains $\mb{B}_{\bas}(F_v, G)$.
\end{lemma}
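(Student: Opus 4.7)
The plan is to combine Kottwitz's classification of basic classes with the defining property of a fundamental torus. For any $F_v$-torus $T$, Kottwitz's theorem gives $\mb{B}(F_v, T) \cong X_*(T)_{\Gamma_{F_v}}$, and for connected reductive $G$ the Kottwitz map restricts to an isomorphism $\mb{B}_{\bas}(F_v, G) \cong X^*(Z(\widehat{G}))_{\Gamma_{F_v}}$ (these are the local pieces of the diagrams assembled in the previous sections). Under the inclusion $T \hookrightarrow G$, the map $\mb{B}(F_v, T) \to \mb{B}(F_v, G)$ corresponds on Kottwitz invariants, by functoriality, to the canonical surjection $X_*(T) \twoheadrightarrow X^*(Z(\widehat{G}))$ given by the identification $X^*(Z(\widehat{G})) = X_*(T)/\Z\Phi^{\vee}$.

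The key geometric input is that a fundamental $F_v$-torus $T$ satisfies: every $F_v$-rational cocharacter of $T$ factors through $Z_G^\circ$. Indeed, being fundamental means $T/Z_G^\circ$ is $F_v$-anisotropic, so $X_*(T/Z_G^\circ)^{\Gamma_{F_v}} = 0$ (it is a torsion-free abelian group of rank $0$); applied to the short exact sequence
$$0 \to X_*(Z_G^\circ) \to X_*(T) \to X_*(T/Z_G^\circ) \to 0$$
of $\Gamma_{F_v}$-modules, this forces $X_*(T)^{\Gamma_{F_v}} = X_*(Z_G^\circ)^{\Gamma_{F_v}}$. For any cocycle $(\nu, x) \in Z^1_{\alg}(\mc{E}_{\iso}, T(\ov{F_v}))$, the cocycle condition $\Int(x_w) \circ \sigma(\nu) = \nu$ simplifies for abelian $T$ to $\sigma(\nu) = \nu$, so $\nu \in X_*(T)^{\Gamma_{F_v}} = X_*(Z_G^\circ)^{\Gamma_{F_v}}$. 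Hence $\nu$ factors through $Z_G$ as an $F_v$-morphism, and the image of $(\nu, x)$ in $\mb{B}(F_v, G)$ is basic.

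Combining: the map $\mb{B}(F_v, T) \to \mb{B}(F_v, G)$ factors through $\mb{B}_{\bas}(F_v, G)$, and on Kottwitz invariants is the surjection $X_*(T)_{\Gamma_{F_v}} \twoheadrightarrow X^*(Z(\widehat{G}))_{\Gamma_{F_v}}$ (coinvariants preserve surjectivity). Since Kottwitz's map is a bijection on basic classes, the induced map $\mb{B}(F_v, T) \to \mb{B}_{\bas}(F_v, G)$ is itself surjective, proving the lemma. The main obstacle is the structural identification $X_*(T)^{\Gamma_{F_v}} = X_*(Z_G^\circ)^{\Gamma_{F_v}}$ coming from ellipticity; once this is in hand the rest is a formal unwinding of Kottwitz's classification and its functoriality.
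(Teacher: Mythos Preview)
Your argument is essentially the paper's proof in the non-archimedean case, and there it is correct: the Newton point lands in $X_*(T)^{\Gamma_{F_v}}=X_*(Z_G^\circ)^{\Gamma_{F_v}}$, so the image is basic, and surjectivity follows from the Kottwitz isomorphism $\mb{B}_{\bas}(F_v,G)\cong X^*(Z(\widehat G))_{\Gamma_{F_v}}$ together with right-exactness of coinvariants.

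The gap is at the archimedean places, which the paper treats separately by citing \cite[Lemma~13.2]{Kot9}. Two of your assertions fail there. First, ``fundamental'' does not mean $T/Z_G^\circ$ is anisotropic: over $\R$ a fundamental torus is one of minimal split rank, and many real groups (e.g.\ $GL_n$ for $n\geq 3$) have no elliptic maximal torus at all; over $\C$ every maximal torus is fundamental and split. So your identity $X_*(T)^{\Gamma_{F_v}}=X_*(Z_G^\circ)^{\Gamma_{F_v}}$ is unjustified outside the $p$-adic case (where fundamental does coincide with elliptic). Second, the isomorphism $\mb{B}_{\bas}(F_v,G)\cong X^*(Z(\widehat G))_{\Gamma_{F_v}}$ that you invoke is a $p$-adic statement; already for $F_v=\C$ and $G=PGL_2$ one has $\mb{B}_{\bas}(\C,PGL_2)=\{\text{pt}\}$ while $X^*(Z(\widehat G))_{\Gamma_\C}=\Z/2\Z$. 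So the final step ``bijection on basic $\Rightarrow$ surjectivity'' does not go through uniformly. You should either restrict your argument to finite $v$ and invoke Kottwitz's archimedean lemma separately, or supply a genuinely different argument at infinity.
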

\begin{proof}
When $v$ is infinite, this follows from \cite[Lemma 13.2]{Kot9}.

When $v$ is finite, $T$ is elliptic maximal and we can essentially use \cite[Remark 13.3]{Kot9}. In particular, for each $b \in \mb{B}(F_v, T)$, the Newton point of $b$ must factor through $Z(G)$ since $T$ is elliptic. Hence $b$ has basic image in $\mb{B}(F_v, G)$. Then the desired result follows from the fact that in the $p$-adic case, the Kottwitz map is an isomorphism on basic elements and the map $X^*(\widehat{T})_{\Gamma_{F_v}} \to X^*(Z(\widehat{G}))_{\Gamma_{F_v}}$ is surjective (since coinvariants are right exact).
\end{proof}
\begin{lemma}
Suppose $v$ is a place of $F$, that $G^*$ is a quasisplit group over $F_v$, and that $\Psi: G^* \to G$ is an extended pure inner twist represented by a cocycle $z^{\iso}$. Suppose that $\gamma \in G(F_v)$ and $\gamma^* \in G^*(F_v)$ are semisimple and there exists $g \in G^*(\ov{F_v})$ such that $\Psi(g\gamma^*g^{-1})=\gamma$. Then there exist maximal $F_v$ tori $T^* \ni \gamma^*$ and $T \ni \gamma$ and an element $g_1 \in G^*(\ov{F_v})$ such that $\Psi(g_1\gamma^* g^{-1}_1)=\gamma$ and $\Psi(g_1T^*g^{-1}_1)=T$ and $\Psi \circ \Int(g)$ gives an isomorphism of $F_v$-tori.
\end{lemma}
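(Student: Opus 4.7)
The plan is to modify $g$ by an element centralizing $\gamma^*$ so that the resulting twisted cocycle becomes valued in a maximal torus of $I^{G^*}_{\gamma^*}$; the tori $T^*$ and $T$ will then emerge transparently. First, define the twisted cocycle $c \colon \mc{E}_{\iso} \to G^*(\ov{F_v})$ by $c(e) := g^{-1} z^{\iso}(e) \sigma_e(g)$. The same direct check used before Lemma~\ref{bgloclem} shows that $c(e)$ centralizes $\gamma^*$, that $c$ is a $1$-cocycle, and that its restriction to $\Gm(\ov{F_v})$ equals the Newton cocharacter $\nu$ of $z^{\iso}$, which factors through $Z(G^*) \subseteq Z(I^{G^*}_{\gamma^*})$. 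Hence $c \in Z^1_{\bas}(\mc{E}_{\iso}, I^{G^*}_{\gamma^*}(\ov{F_v}))$, using that $I^{G^*}_{\gamma^*}$ is a connected reductive $F_v$-group of the same rank as $G^*$ (from our standing hypothesis that $G^*_{\der}$ is simply connected).

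Pick a fundamental maximal $F_v$-torus $T^* \subset I^{G^*}_{\gamma^*}$. By Lemma~\ref{bgsurj}, the class $[c] \in \mb{B}_{\bas}(F_v, I^{G^*}_{\gamma^*})$ lies in the image of $\mb{B}(F_v, T^*)$, so there exist a cocycle $t \in Z^1(\mc{E}_{\iso}, T^*(\ov{F_v}))$ and $h \in I^{G^*}_{\gamma^*}(\ov{F_v})$ with
\begin{equation*}
t(e) \;=\; h^{-1} c(e) \sigma_e(h) \;=\; (gh)^{-1} z^{\iso}(e) \sigma_e(gh).
\end{equation*}
Set $g_1 := gh$. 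Since $h$ centralizes $\gamma^*$, one has $\Psi(g_1 \gamma^* g_1^{-1}) = \Psi(g \gamma^* g^{-1}) = \gamma$. Moreover $T^*$ is a maximal torus of $G^*$ in its own right (as $I^{G^*}_{\gamma^*}$ has full rank), and $\gamma^* \in Z(I^{G^*}_{\gamma^*}) \subset T^*$, so $T^* \ni \gamma^*$ is one of the required tori.

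Set $\phi := \Psi \circ \Int(g_1)$ and $T := \phi(T^*) \subset G_{\ov{F_v}}$, so that $\phi(\gamma^*) = \gamma \in T$ and $\Psi(g_1 T^* g_1^{-1}) = T$ by construction. It remains only to check that $\phi|_{T^*}$ is Galois-equivariant; this will descend it to an isomorphism of $F_v$-tori $T^* \to T$, and in particular endow $T$ with its desired $F_v$-structure. For $\sigma \in \Gamma_{F_v}$ and any lift $e \in \mc{E}_{\iso}$ of $\sigma$, the relation $\Psi^{-1} \circ \sigma(\Psi) = \Int(z^{\iso}(e))$ --- well-defined since basicness of $z^{\iso}$ makes the dependence on the lift lie in $Z(G^*)$ --- gives
\begin{equation*}
\phi^{-1} \circ \sigma(\phi) \;=\; \Int\bigl(g_1^{-1} z^{\iso}(e) \sigma(g_1)\bigr) \;=\; \Int(t(e)).
\end{equation*}
As $t(e) \in T^*(\ov{F_v})$ and $T^*$ is commutative, $\Int(t(e))$ acts trivially on $T^*$, so $\sigma(\phi)$ and $\phi$ agree on $T^*$, as required.

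The only non-formal input is the passage from the centralizer-valued cocycle $c$ to a torus-valued representative: a priori, $g^{-1} z^{\iso}(e) \sigma_e(g)$ merely commutes with $\gamma^*$, which is not enough to force $\phi|_{T^*}$ to be $F_v$-rational. Lemma~\ref{bgsurj}, together with the existence of a fundamental torus in the reductive group $I^{G^*}_{\gamma^*}$, is exactly what enables this surgery; everything else is direct verification.
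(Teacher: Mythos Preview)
Your proof is correct and follows the same essential strategy as the paper: construct the basic class in $\mb{B}_{\bas}(F_v, I^{G^*}_{\gamma^*})$ from $g$ and $z^{\iso}$, then invoke Lemma~\ref{bgsurj} to lift it to a fundamental torus $T^*$ of $I^{G^*}_{\gamma^*}$. The execution differs, however. The paper routes through the bijection of Lemma~\ref{extendedbij}: it picks a strongly regular $\gamma^*_{T^*} \in T^*$, reconstructs a triple $(\Psi, z^{\iso}, [\gamma_T])$ from the lifted class, and only obtains $\Psi(g_1 \gamma^* g_1^{-1}) = \gamma'$ with $\gamma'$ merely $G(F_v)$-conjugate to $\gamma$; a final adjustment by that conjugating element is then needed. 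Your argument is more direct: since the conjugating element $h$ realizing the lift already lies in $I^{G^*}_{\gamma^*}(\ov{F_v})$, setting $g_1 = gh$ preserves $\Psi(g_1 \gamma^* g_1^{-1}) = \gamma$ on the nose, and the Galois-equivariance of $\Psi \circ \Int(g_1)$ on $T^*$ follows immediately from $\phi^{-1} \circ \sigma(\phi) = \Int(t(e))$ with $t(e) \in T^*$. This bypasses both the strongly regular auxiliary element and the final $G(F_v)$-conjugation, at the cost of a short direct verification of $F_v$-rationality that the paper instead extracts from the reverse direction of Lemma~\ref{extendedbij}.
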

\begin{proof}
By Lemma \ref{extendedbij} we can associate to the triple $(\Psi, z^{\iso}, [\gamma])$, an element $b \in \mb{B}_{\bas}(F_v, I^{G^*}_{\gamma^*})$. We now pick a fundamental torus $T^*$ of $I^{G^*}_{\gamma^*}$. Then by Lemma \ref{bgsurj} we can pick a lift $b' \in \mb{B}(F_v, T^*)$ that maps to $b$ under the canonical map $\mb{B}(F_v, T^*) \to \mb{B}(F_v, I^{G*}_{\gamma^*})$. We now fix a strongly regular $\gamma^*_{T^*} \in T^*(F)$ and by applying the construction in Lemma \ref{extendedbij} to $b'$ and the projection $\pi: T^* \to G^*$, we get a triple $(\Psi, z^{\iso}, [\gamma_T])$ corresponding to $b'$. Note, that by construction, we can indeed pick $(\Psi, z^{\iso})$ in this triple to be the $(\Psi, z^{\iso})$ we started with. We let $T$ be the centralizer of $\gamma_T$ in $G$. By the proof of loc. cit. we get a cocycle $z^{b'}$ representing $b'$ and a $g_1 \in G^*(\ov{F_v})$ so that $z^{b'}$ is given by $e \mapsto g^{-1}_1 z^{\iso}_e \sigma_e(g_1)$  and $\Psi(g_1 \gamma^*_{T^*} g^{-1}_1)=\gamma_{T}$. In particular, $\Psi(g_1 T^*g^{-1}_1)=T$ and induces an isomorphism of $F_v$ tori. Let $\gamma'=\Psi(g_1 \gamma^*g^{-1}_1) \in T$. Then $(\Psi, z^{\iso}, [\gamma'])$ is another representative of the equivalence class of triples corresponding to $b$ under Lemma \ref{extendedbij}. Since any automorphism of $(\Psi, z^{\iso})$ will be given by conjugation by an element of $G(F_v)$, it follows that $\gamma'$ and $\gamma$ are conjugate in $G(F_v)$. Thus by taking a $G(F_v)$-conjugate of $T$ and similarly modifying $g_1$ we can arrange that $\Psi(g_1\gamma^*g^{-1}_1)=\gamma$.
\end{proof}
\begin{lemma}{\label{sequencelem}}
For a fixed choice of $\gamma^* \in G^*(F_v)$ and $\gamma \in G(F_v)$ such that there exists $g \in G^*(\ov{F_v})$ with  $\Psi(g\gamma^*g^{-1})=\gamma$, we can find sequences of elements $(\gamma^*_i)$ and $(\gamma_i)$ such that each $\gamma_i \in G(F)_{\sr}$ and $\gamma^*_i \in G^*_{\sr}$ satisfying:
\begin{enumerate}
   \item $\lim\limits_{i \to \infty} \gamma_i = \gamma$,
   \item $\lim\limits_{i \to \infty} \gamma^*_i = \gamma^*$,
    \item For each $i$, we have that $\gamma^*_i$ is a $\Psi$-transfer of $\gamma_i$,
    \item There are fixed maximal tori $T^* \subset G^*$ and $T \subset G$ such that the centralizer of each $\gamma^*_i$ is $T^*$ and the centralizer of each $\gamma_i$ is $T$,
  \item For each $i$, we have that the cocycles defining $\inv[z^{\iso}](\gamma^*_i, \gamma_i)(v)$ and $\inv[ z^{\iso}](\gamma^*, \gamma)(v)$ are equal in $Z^1_{\bas}(\mc{E}_{\iso}, I^{G^*}_{\gamma_*}(\ov{F_v}))$.
\end{enumerate}
\end{lemma}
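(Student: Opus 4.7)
The plan is to reduce everything to a single torus inside which we can approximate freely. First I apply the previous lemma to obtain maximal $F_v$-tori $T^* \ni \gamma^*$ in $G^*$ and $T \ni \gamma$ in $G$, together with $g_1 \in G^*(\ov{F_v})$, such that $\Psi(g_1 \gamma^* g_1^{-1}) = \gamma$, $\Psi(g_1 T^* g_1^{-1}) = T$, and $\Psi \circ \Int(g_1)$ restricts to an $F_v$-isomorphism $T^* \xrightarrow{\sim} T$. I will compute $\inv[z^{\iso}(v)](\gamma^*, \gamma)$ using this specific $g_1$, so that its representing cocycle is
\begin{equation*}
    e \mapsto g_1^{-1} z^{\iso}(v)_e \sigma_e(g_1),
\end{equation*}
a function taking values in $T^*(\ov{F_v}) \subseteq I^{G^*}_{\gamma^*}(\ov{F_v})$.

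Next I construct the approximating sequences inside $T^*$. Since $T^*$ is a maximal torus of $G^*$, the strongly regular locus $T^*_{\sr}$ is the complement in $T^*$ of the kernels of the (finitely many) roots of $T^*$ in $G^*$; each such kernel is a proper closed $F_v$-subvariety of $T^*$ and hence its $F_v$-points form a nowhere dense closed subset of $T^*(F_v)$. Thus $T^*_{\sr}(F_v)$ is open and dense in $T^*(F_v)$, and I may choose $\gamma^*_i \in T^*_{\sr}(F_v)$ with $\gamma^*_i \to \gamma^*$. Setting $\gamma_i := \Psi(g_1 \gamma^*_i g_1^{-1})$, the $F_v$-isomorphism $\Psi \circ \Int(g_1): T^* \to T$ ensures $\gamma_i \in T(F_v)$, and strong regularity transfers through this isomorphism, so $\gamma_i \in G(F_v)_{\sr}$; continuity gives $\gamma_i \to \gamma$, and $\gamma^*_i$ is a $\Psi$-transfer of $\gamma_i$ by construction. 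This handles properties (1)--(3).

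For (4) I use the same $g_1$ to compute both invariants. Because $T^* \subseteq I^{G^*}_{\gamma^*}$ always, and $T^* = I^{G^*}_{\gamma^*_i}$ since $\gamma^*_i$ is strongly regular, the cocycle representing $\inv[z^{\iso}(v)](\gamma^*_i, \gamma_i)$ is once again
\begin{equation*}
    e \mapsto g_1^{-1} z^{\iso}(v)_e \sigma_e(g_1),
\end{equation*}
viewed now as a cocycle valued in $T^*(\ov{F_v}) = I^{G^*}_{\gamma^*_i}(\ov{F_v})$. This is literally the same function of $e$ as the cocycle representing $\inv[z^{\iso}(v)](\gamma^*, \gamma)$.

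The potentially subtle point is simultaneously controlling the conjugator while perturbing $\gamma^*$ within its centralizer, but the previous lemma already produces $g_1$ together with a pair of compatible maximal tori $T^*, T$ defined over $F_v$; once those are in hand the construction is simply "wiggle inside $T^*$ and propagate by $\Psi \circ \Int(g_1)$", and equality of cocycles falls out of reusing the same $g_1$. The remaining ingredients---density of the strongly regular locus in a maximal torus over a local field, and continuity of the conjugation map---are standard.
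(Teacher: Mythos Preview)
Your argument is correct and follows the same route as the paper: apply the previous lemma to get compatible maximal tori $T^*, T$ and a conjugator $g_1$, approximate $\gamma^*$ by strongly regular elements of $T^*(F_v)$, and push these across via $\Psi \circ \Int(g_1)$. The paper's proof is terser and cites Lemma~\ref{bgloclem} for point~(4), whereas you directly observe that reusing $g_1$ makes the two cocycle formulas literally identical as functions into $T^*(\ov{F_v}) \subset I^{G^*}_{\gamma^*}(\ov{F_v})$; this is exactly the content of that lemma specialized to the situation at hand, so nothing is missing.
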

\begin{proof}
By the previous lemma, we find $g \in G^*(\ov{F_v})$ and maximal tori $T^* \subset G$ and $T \subset G$ such that $\Psi(gT^*g^{-1})=T$ and $\Psi(g \gamma^* g^{-1})=\gamma$. Pick a sequence of strongly regular elements $\gamma^*_i$ in $T^*$ and converging to $\gamma^*$. Then define $\gamma_i = \Psi(g \gamma^*_ig^{-1})$. The last point then follows from the proof of Lemma \ref{bgloclem} and the fact that we have $\Psi(g\gamma^*_ig^{-1})=\gamma_i$ and $\Psi(g \gamma^*g^{-1})=\gamma$ for the same $g$.
\end{proof}
We now complete the proof of Theorem \ref{localtrans}.

\begin{proof}
To prove Theorem \ref{localtrans}, it suffices to show that $\Delta[\frak{w}_v, z^{\iso}(v)]$ equals the unique continuous extension of the $\mb{B}(F_v,G)$-normalized local transfer factor (as in \cite[Prop. 4.3.1]{KalTai}) from the strongly regular to $(G,H)$-regular locus. Our transfer factor is identical to that of Kaletha and Taibi on the strongly regular locus, so it suffices to show that we can find sequences $(\gamma^H_i), (\gamma_i)$ of strongly regular elements converging to $\gamma^H, \gamma$ such that our factor satisfies $\lim\limits_{i \to \infty} \Delta[\mf{w}_v, z^{\iso}(v)](\gamma^H_i, \gamma_i) = \Delta[\mf{w}_v, z^{\iso}(v)](\gamma^H, \gamma)$.  The Whittaker-normalized transfer factor, $\Delta[\mf{w}_v]$, for $\mb{G}^*$ is already known to have this property. Hence if we have found sequences as above, we need only choose a sequence $(\gamma^*_i)$ for $\gamma^*_i \in G^*(F_v)$ and $\Psi_v(\gamma^*_i)$ stably conjugate to $\gamma_i$ such that
\begin{equation*}
    \lim\limits_{i \to \infty} \langle \inv[z^{\iso}](\gamma^*_i, \gamma_i)(v), \widehat{\varphi}^{-1}_{\gamma^*_i, \gamma^H_i}(s) \rangle = \langle \inv[z^{\iso}](\gamma^*, \gamma)(v), \widehat{\varphi}^{-1}_{\gamma^*, \gamma^H}(s) \rangle.
\end{equation*}
This is the content of Lemma \ref{sequencelem}.
\end{proof}

We can now define the global transfer factor $\Delta[\mathfrak{w}, z^{\iso}]$ to be the product of the local factors. Note that in order for such a product to make sense, we must have that $\Delta[\mf{w}_v, z^{\iso}(v)](\gamma^{\mb{H}}, \gamma)$ is trivial at all but finitely many places $v$. This follows from the analogous statement for $\Delta[\mf{w}_v]$ as well as the fact (derived in \S\ref{sec:refinedinv} from our work in \S\ref{totalloc}) that $\inv[z^{\iso}](\gamma^*, \gamma)(v)$ is trivial for all but finitely many $v$. Because of our normalization choices, we now expect $\Delta[\mf{w}, z^{\iso}]$ to satisfy
\begin{equation}{\label{transfactcanoneqnnorm}}
    \Delta[\mf{w}, z^{\iso}](\gamma_{\mb{H}}, \gamma) = \langle \obs(\gamma)^{-1}, \widehat{\varphi}^{-1}_{\gamma^*, \gamma^{\mb{H}}}(s) \rangle.
\end{equation}
This turns out to be the case.
\begin{corollary}{\label{cor: globaltranscor}}
The term $\Delta[\mathfrak{w}, z^{\iso}]$ is equal to the unique continuous extension to $\mb{H}(\A_F)_{(\mb{G},\mb{H})-\reg} \times \mb{G}(\A_F)_{\semis}$ of the canonical global absolute transfer factor of \cite[\S7.3]{KS} and satisfies Equation \eqref{transfactcanoneqnnorm} when restricted to $\mb{H}(F)_{(\mb{G},\mb{H})-\reg} \times \mb{G}(\A_F)_{\semis}$.
\end{corollary}
\begin{proof}
To prove the first claim, we note that $\Delta[\mf{w}, z^{\iso}]$ is defined to be a product of local factors that are continuous extensions of the $\mb{B}(F_v, G)$-normalized transfer factors of \cite[Prop. 4.3.1]{KalTai}. The product of these local factors equals the canonical global factor on the $\mb{H}(\A_F)_{\mb{G}-\reg} \times \mb{G}(\A_F)_{\sr}$ locus since it satisfies Equation \eqref{transfactcanoneqnnorm} (\cite[Prop. 4.3.2]{KalTai}). This implies the first claim.

To prove the second claim, we recall that by Lemma \ref{lem: transfactvanish}, we have
\begin{equation*}
    \prod\limits_v \Delta[\mf{w}_v](\gamma^{\mb{H}} , \gamma^*) =1,
\end{equation*}
for $(\gamma^{\mb{H}} , \gamma^*) \in \mb{H}(F)_{(\mb{G}^*,\mb{H})-\reg} \times \mb{G}^*(F)_{\semis}$  Hence, it suffices to show that 
\begin{equation*}
    \langle \inv[z^{\iso}](\gamma^*, \gamma) , \hat{\varphi}^{-1}_{\gamma^*, \gamma^{\mb{H}}}(s)\rangle = \langle \obs(\gamma), \hat{\varphi}^{-1}_{\gamma^*, \gamma^{\mb{H}}}(s)\rangle,
\end{equation*}
for $(\gamma^{\mb{H}} ,\gamma) \in \mb{H}(F)_{(\mb{G},\mb{H})-\reg} \times \mb{G}(\A_F)_{\semis}$ and $\gamma^* \in \mb{G}^*(F)$. But this follows from Proposition \ref{obseqz} and Diagram \eqref{keycommdiagram}.
\end{proof}

\printbibliography
\end{document}